\newtheorem{theorem}{Theorem}[section]
\newtheorem{lm}[theorem]{Lemma}
\newtheorem{exa}[theorem]{Example}
\newtheorem{cor}[theorem]{Corollary}
\newtheorem{pro}[theorem]{Proposition}
\newtheorem{defi}[theorem]{Definition}
\newtheorem{nota}[theorem]{Notation}
\newtheorem{rem}[theorem]{Remark}
\newtheorem{fact}[theorem]{Fact}
\newtheorem{nist}[theorem]{}
\newcommand{\timplies}{\text{ implies }}
\newcommand{\df}{\ensuremath{\overset{\mathrm{df}}{=}}}
\newcommand{\tiff}{if and only if \ }
\newcommand{\ult}{\operatorname{Ult}}
\def\p{\varphi}
\def\a{\alpha}
\def\d{\delta}
\def\ep{\varepsilon}
\def\g{\gamma}
\def\GA{\Gamma}
\def\s{\sigma}
\def\ups{\upsilon}
\def\up{\upsilon}
\def\fs{\hat{f}}
\def\lra{\longrightarrow}
\def\lla{\longleftarrow}
\def\sbe{\subseteq}
\def\stm{\setminus}
\def\ems{\emptyset}
\def\nes{\neq\emptyset}
\def\cuk{\,\check{}\,}
\def\ti{\tilde}
\def\ex{\exists}
\def\we{\wedge}
\def\ap{^{\prime}}
\def\inv{^{-1}}
\def\st{\ |\ }
\def\nin{\not\in}
\def\card #1{\vert #1 \vert}
\def\1{{\bf 1}}
\def\2{\mbox{{\bf 2}}}
\def\3{\mbox{{\bf 3}}}
\def\AA{{\cal A}}
\def\BB{{\cal B}}
\def\CC{{\cal C}}
\def\DD{{\cal D}}
\def\PP{{\cal P}}
\def\TT{{\cal T}}
\newcommand{\RC}{\operatorname{RC}}
\newcommand{\RO}{\operatorname{RO}}
\newcommand{\CO}{\operatorname{CO}}
\newcommand{\Mor}{\operatorname{Mor}}
\def\DHC{{\bf DeV}}
\def\Fed{{\bf Fed}}
\def\HC{{\bf CHaus}}
\def\Bool{{\bf Bool}}
\def\Stone{{\bf Stone}}
\def\CBool{{\bf CBool}}
\def\cod{{\rm cod}}
\def\int{\mbox{{\rm int}}}
\def\cl{\mbox{{\rm cl}}}
\def\CL{\mbox{{\rm Clust}}}
\def\Ult{\mbox{{\rm Ult}}}
\def\doc{\hspace{-1cm}{\em Proof.}~~}
\def\sq{\hspace*{\fill} \hbox{\vrule\vbox{\hrule\phantom{o}\hrule}\vrule}}
\def\sqs{\sq \vspace{2mm}}
\def\Top{{\bf Top}}
\def\baf{\hat{f}}
\def\ECH{{\bf ECH}}
\def\tcx{t_X^C}
\def\tcy{t_Y^C}
\def\tcx0{t_{(X,X_0)}}
\def\tcy0{t_{(Y,Y_0)}}
\def\di{\diamond}
\def\bU0{\bar{U}=(U^0,(U^i,U^{ci})_{i\in\omega})}
\def\bV0{\bar{V}=(V^0,(V^i,V^{ci})_{i\in\omega})}
\title{{\LARGE\bf Extensions of dualities and a new}\\
\vspace{0.2cm}
{\LARGE\bf approach to the Fedorchuk duality}\\
\vspace{0.5cm}
{\large\bf G. Dimov, E. Ivanova-Dimova
and W. Tholen}\thanks{The first two authors were supported under
contract no. 80-10-107/19.04.2018 "Dual categories"  of Sofia University $``$St. Kl. Ohridski''. The third author acknowledges the support under Discovery Grant no. 501260 of the Natural Sciences and Engineering Council of Canada.}
\\
\vspace{0.2cm}
{\footnotesize\rm Faculty of Mathematics and Inf.,  Sofia University,}
{\footnotesize\rm 5 J. Bourchier Blvd., 1164 Sofia, Bulgaria}\\
{\footnotesize\rm Department of Mathematics and Stats., York University,}
 {\footnotesize\rm Toronto, Ontario, M3J 1P3, Canada}
\\
\vspace{0.5cm}
{\large\em Dedicated to Professor Alexander Vladimirovich Arhangel'ski\u{\i} on the occasion of his
80th birthday}}
\author{}
\date{}
\begin{document}

\maketitle

\begin{abstract}
Applying a general categorical construction for the extension of dualities,
we present a new proof of the Fedorchuk duality between the category of compact Hausdorff spaces with their quasi-open mappings and the category of
complete normal contact algebras with suprema-preserving Boolean homomorphisms which reflect the contact relation.


\end{abstract}

\footnotetext[1]{{\footnotesize
{\em Keywords:}  compact Hausdorff space, quasi-open map, irreducible map, projective cover, Stone space, (complete) Boolean algebra, (normal) contact algebra, Stone duality, de Vries duality, Fedorchuk duality, (fully) left semi-adjoint functor.}}

\footnotetext[2]{{\footnotesize
{\em 2010 Mathematics Subject Classification:}  54D30, 54B30, 18A40, 18B30, 54E05, 54C10, 54G05, 54H10,   06E15,  03G05.}}

\footnotetext[3]{{\footnotesize {\em E-mail addresses:}
gdimov@fmi.uni-sofia.bg, elza@fmi.uni-sofia.bg, tholen@mathstat.yorku.ca}.}

\section{Introduction}

The celebrated Stone Duality Theorem \cite{ST} shows that the entire information about a zero-dimensional compact Hausdorff space (= {\em Stone space}) $X$  is, up to homeomorphism, contained in its Boolean algebra $\CO(X)$ of all clopen (= closed and open) subsets of $X$. Likewise, all information about the continuous maps between two such spaces $X$ and $Y$ is encoded by the Boolean homomorphisms between the Boolean algebras $\CO(Y)$ and $\CO(X)$. It is natural to ask whether a similar result holds for all compact Hausdorff spaces  and continuous maps between them. The first candidate for the role of the Boolean algebra $\CO(X)$
under such an extension seems to be the Boolean algebra $\RC(X)$ of all regular closed subsets of a compact Hausdorff space $X$ (or,  its isomorphic copy $\RO(X)$, which collects all regular open subsets of $X$), but it fails immediately: indeed, as it is well-known,  $\RC(X)$ is isomorphic to $\RC(EX)$, where $EX$ is the absolute of $X$. However, in 1962, de Vries \cite{deV} showed that, if we regard the Boolean algebra $\RC(X)$ together with the relation $\rho_X$ on $\RC(X)$, defined by
$$F\rho_X G\Leftrightarrow F\cap G\nes,$$
then the pair $(\RC(X),\rho_X)$ determines uniquely (up to homeomorphism) the compact Hausdorff space $X$. Moreover, with the help of some special maps between two pairs $(\RC(X),\rho_X)$ and $(\RC(Y),\rho_Y)$, where $X$ and $Y$ are compact Hausdorff spaces, one can reconstruct all continuous maps between $Y$ and $X$. He gave an algebraic description of the pairs $(\RC(X),\rho_X)$ as pairs $(A,C)$, formed by a complete Boolean algebra $A$ and a relation $C$ on $A$, satisfying some axioms, and he also described algebraically the needed special maps of such pairs. In this way he obtained the category $\DHC$ and its dual equivalence with the category $\HC$ of compact Hausdorff spaces and continuous maps. In fact, de Vries did not use the relation $\rho_X$ as mentioned above, but its ``dual", that is, the relation $F\ll_X G$ defined by
$(F\ll_X G\Leftrightarrow F(-\rho_X) G^*)$, where $G^*$ is the Boolean negation of $G$ in $\RC(X)$; equivalently,  $$F\ll_X G\Leftrightarrow F\sbe \int_X(G).$$
Now known as {\em de Vries algebras},
he originally called the abstract pairs $(A,\ll)$ {\em compingent algebras}. The axioms for the relation $C$ (respectively, $\ll_C$) on $A$ are precisely the axioms for Efremovi\v{c} proximities \cite{EF}, with only one exception: instead of Efremovi\v{c}'s separation axiom, which refers to the points of the space in question, de Vries introduced what is now called the {\em extensionality axiom} (see \cite[Lemma 2.2, p.215]{DV1} for a motivation for this terminology). Since Efremovi\v{c} proximities are relations on the Boolean algebra $({\rm P}(X),\sbe)$ of all subsets of a set $X$, de Vries algebras may be regarded as point-free generalizations of the Efremovi\v{c} proximities. Nowadays the pairs $(A,C)$, where $A$ is a Boolean algebra and $C$ is a proximity-type relation on $A$, attract the attention not only of topologists, but also of logicians and theoretical computer sciencists. Amongst the many generalizations of de Vries algebras, the most popular ones are the so-called {\em RCC systems (Region Connection Calculus)} of Randell-Cui-Cohn \cite{RCC}.
Their generalizations include the {\em contact algebras} (introduced in \cite{DV1,DV2}),  which are point-free analogues of the \v{C}ech proximity spaces,
 and {\em precontact algebras}, defined  independently and almost simultaneously, but in a completely different form, by S. Celani \cite{Celani} (for the needs of logic) and by I. D\"{u}ntsch and D. Vakarelov \cite{DUV} (for the needs of theoretical computer science). These and the RCC systems are very useful notions in the foundations of
 artificial intelligence,  geographic information systems, robot navigation, computer-aided design, and more (see \cite{ch01}, \cite{cosit2009} or \cite{ww12} for details), as well as  in logic,  namely in {\em spatial logics}
 \cite{A}  (called sometimes {\em logics of space}\/).

   A relation $C$ on a Boolean algebra $A$ which satisfies the de Vries axioms (corresponding to the relation $\rho_X$ above) is called  a {\em  normal contact relation}, and the pair $(A,C)$ then becomes a {\em  normal contact algebra} (briefly, an NCA) \cite{DV1}. In other words, the de Vries algebras ``in $\rho_X$-form" are precisely the complete NCAs.
   De Vries \cite{deV} noted that his dual equivalence $\Psi^a:\DHC\lra\HC$
  is an extension of the restriction $T:\CBool\lra\ECH$ of Stone's dual equivalence $S^a:\Bool\lra\Stone$; here $\Bool$ denotes the category of  Boolean algebras and Boolean homomorphisms  and
  $\CBool$ its full subcategory  of complete Boolean algebras; $\Stone$ is the category of  Stone spaces and continuous maps, and
  $\ECH$ denotes its full subcategory of extremally disconnected compact Hausdorff spaces. Therefore, {\em the objects of the category $\DHC$ are precisely $``$the structured $\CBool$-objects $(A,C)$".}

 Using the de Vries duality, in \cite[Theorem 8.1(1)]{Bezh} Bezhanishvili proved that, if $A$ is a complete Boolean algebra, then there exists a bijective correspondence between the set of all normal contact relations $C$ on $A$ and the set of all (up to homeomorphism) Hausdorff irreducible images of the Stone dual $S^a(A)$ of $A$. Hence, {\em the objects of de Vries' category $\DHC$ may be regarded as pairs $(A,p)$, where $A$ is a $\CBool$-object and $p:S^a(A)\lra X$ is an irreducible map onto a Hausdorff space $X$, so that $p$ is a special $\HC$-morphism;} in fact, $p$ is a {\em projective cover of $X$}.
With the structure of the objects presented in map form, we are ready to formulate the principal problem of this paper in categorical terms.

Let $T:\AA\lra\BB$
 be a dual equivalence between two categories $\AA$ and $\BB$, and $\BB$ be a full subcategory of a category $\CC$. Then it is not at all surprising that one can construct a category $\DD$ containing $\AA$ as a full subcategory, and a dual equivalence $\tilde{T}:\DD\lra\CC$ extending $T$ along the inclusion functors $I$ and $J$, as in the diagram
$$\xymatrix{\DD
\ar[r]^{\tilde{T}}\ar@{}[rd]|{\rm{}} & \CC
\\\AA\ar@{^(->}[u]^{J}\ar[r]^{T} & \BB.\ar@{^(->}[u]_{I}}$$
Inside $\CC$, one may simply replace $\BB$ by $\AA$ and adjust the composition using the dual equivalence $T$ to obtain the category $\DD$! This \emph{ad-hoc} procedure, however, does not make for a naturally described category $\DD$, since the definition of the hom-sets of $\DD$ changes with the two types of objects involved.
The principal goal of this paper is therefore to model the objects of a suitable extension category $\DD$ of $\AA$ dually equivalent to $\CC$ in a {\em natural} way, as $\AA$-objects provided with a structure that gives them a strong algebraic flavour. Our comments on the de Vries duality suggest to consider as objects of $\DD$ the pairs $(A,p)$, with $A$ an $\AA$-object and $p$ a $``$special" $\CC$-morphism with domain $T(A)$. Being $``$special" may be described as lying in a given class $\PP$ of $\CC$-morphisms  satisfying suitable axioms, which suffice to establish a category $\DD$ with a dual equivalence $\ti{T}:\DD\lra\CC$. In executing this program, our principal target is the Fedorchuk duality. The application of the general setting to other dualities, including the de Vries duality, is considered in the sequel \cite{DDT} to this paper.

The $\DHC$-morphisms are quite unusual and not very convenient to work with, since they are not Boolean homomorphisms, and their $\DHC$-composition is not the set-theoretical composition of functions. But, as Fedorchuk \cite{F} noted, the suprema-preserving Boolean homomorphisms which reflect the contact relation, are $\DHC$-morphisms and, moreover, their $\DHC$-composition coincides with the usual set-theo\-re\-tical composition of functions. He therefore considered the (non-full) subcategory $\Fed$ of $\DHC$ with the same objects, but with morphisms only these $``$good" $\DHC$-morphisms. He showed that the restriction $\Phi^a$ of the de Vries dual equivalence $\Psi^a:\DHC\lra\HC$
to his subcategory $\Fed$  sends $\Fed$ onto the subcategory $\HC_{{\rm qop}}$ of compact Hausdorff spaces and quasi-open mappings of the category $\HC$, obtaining in this way his duality theorem.

Here is a brief explanation of how the Fedorchuk duality has motivated and, in turn, be derived from, our categorical approach. Let $\CBool_{\rm{sup}}$ denote the category of  complete Boolean algebras with their suprema-preserving Boolean homomorphisms, and $\ECH_{\rm{op}}$ the category of extremally disconnected compact Hausdorff spaces and their open maps. With $\AA\df \CBool_{\rm{sup}}$, $\BB\df \ECH_{\rm{op}}$ and $\CC\df \HC_{{\rm qop}}$, $\BB$ is obviously a full subcategory of $\CC$, and it follows immediately from a result obtained in \cite[Corollary 3.2(c)]{D-PMD12} (see also \cite[Corollary 2.4(c)]{D-a0903-2593}), that the restriction $T$ of the Stone dual equivalence $S^a:\Bool\lra\Stone$ to the category $\AA$ is a dual equivalence between the categories $\AA$ and $\BB$.  Consider now the class $\PP$ of all irreducible maps between compact Hausdorff spaces with domain in $\BB$; these are well known to be morphisms in $\CC$. Hence, $\Fed$-objects (which coincide with $\DHC$-objects) may be regarded as the pairs $(A,p)$ with $p:T(A)\lra C$ in the class $\PP$. We prove here that the category $\BB$ is a coreflective subcategory of the category $\CC$. This means equivalently that, if  $p:EX\lra X$ and $p\ap:EX\ap\lra X\ap$ are projective covers ({\em i.e.}, $p,p\ap\in\PP$), then, for every quasi-open mapping $f:X\lra X\ap$ there exists a unique open mapping $\fs:EX\lra EX\ap$ such that $f\circ p=p\ap\circ\fs$. In the particular case when $f$ is a surjection, this follows from the results of Henriksen and Jerison \cite{HJ} and Bereznitskij (cited in \cite{PS}), but we were not able to find a reference for this result without the surjectivity assumption.

Our categorical extension for a dual equivalence relies on five basic conditions on the abstract class $\PP$ (see Section 3). In its basic form it is formulated in Proposition \ref{mere duality}; Theorem \ref{what about S} gives a more comprehensive and sophisticated version of the extension theorem, exhibiting all functorial relations of the extended dual equivalence with the given data, but it requires a slight strengthening of one of the conditions on $\PP$. Coreflectivity of $\BB$ in $\CC$ always allows for the provision of such a class $\PP$, and another strengthening of one of the axioms actually characterizes coreflectivity. Interestingly, the conditions on $\PP$ in their basic form are equivalent to a weakening of the concept of adjoint functors, introduced by Medvedew \cite{medvedev} under the name {\em semi-adjunction}; it emerges when one drops from the notion of adjunction one of the so-called triangular identities.

The paper is organized as follows. Section 2 summarizes all notions and terms that are needed for our exposition. Section 3 gives the categorical extension result, first in a basic and then in an advanced form, as indicated in the previous paragraph. 
In Section 4 we give a review of mostly known facts concerning the de Vries and the Fedorchuk dualities, augmented by some novel results, such as 
the generalization of  Bereznitskij's result on quasi-open maps, as mentioned above (see Proposition \ref{begen}).
After that, in Section 5,
applying our general categorical construction for the extension of dualities,
we present
an alternative proof of the Fedorchuk Duality Theorem, {\em without} making use of the
de Vries Duality Theorem. Thanks to this approach, we
 obtain a topological interpretation of all algebraic notions
used in the Fedorchuk Duality Theorem.
We start by proving that the category $\bf{ECH}_{\rm{op}}$ is coreflective in the category $\HC_{\rm{qop}}$ (see Proposition \ref{n1n}),
and we proceed by presenting a new direct proof of the Bezhanishvili theorem \cite[Theorem 8.1(1)]{Bezh} cited above  whose original proof uses
de Vries' Duality Theorem
(see Lemmas \ref{lem1} and \ref{lem2}, Proposition \ref{mainpro} and Corollary \ref{cor1f}). In this way the topological nature of the objects of the category $\Fed$ (and thus, of the objects of the category $\DHC$) gets transparently exposed.
For the morphisms of the category $\Fed$, Proposition \ref{morphpro} does the corresponding job. We are then in a position to
show that the category $\Fed$ is equivalent to the category $\DD$ as given by the categorical Proposition \ref{mere duality} in this concrete situation (see Theorem \ref{thnewfed}), completing in such a way our new proof of the Fedorchuk Duality Theorem.
Further, Proposition \ref{prff} clarifies the definition of the Fedorchuk dual equivalence $\Phi^a:\Fed\lra\HC_{{\rm qop}}$.
Finally, with the help of our categorical Theorem  \ref{what about S} for the extension of dualities, we reveal the connection between the Stone dual equivalence $S^t:\Stone\lra\Bool$ and the Fedorchuk dual equivalence $\Phi^t:\HC_{{\rm qop}}\lra\Fed$.

Our general references for unexplained notation are \cite{AHS} for category theory,
 \cite{E} for topology, and \cite{kop89} for Boolean algebras.

 \section{Preliminaries}

Below we first recall the notions of  {\em contact algebra}\/ and  {\em  normal contact algebra}.
They can be regarded as  algebraic analogues of proximity spaces (see \cite{EF,Sm,CE,AP,NW} for proximity spaces).
Generally speaking, in this paper we work mainly with Boolean algebras with  supplementary structures on them. In all cases, we will say that the
structured Boolean algebra in question is {\em complete}\/ if the underlying Boolean algebra is complete.
Our standard notation for the operations of a Boolean algebra $B$ is indicated by $B=(B, \land, \lor, {}^*, 0, 1)$; note in particular that the complement in $B$ is denoted by ${}^*$, and that $0$ and $1$ denote the least and largest element in $B$, not excluding the case $0=1$.

\begin{defi}\label{conalg}{\rm (\cite{DV1})}
\rm
A {\em Boolean contact algebra}, or, simply, {\em contact algebra}\/ (abbreviated as CA), is a structure $(B,C)$, where $B$ is a Boolean algebra, and $C$ a binary relation on $B$, called a {\em contact relation}, which satisfies the following axioms:

\begin{enumerate}
\renewcommand{\theenumi}{\ensuremath{({\rm C}\arabic{enumi})}}
 \item If $a\not= 0$ then $aCa$.\label{c1}
    \item If $aCb$ then $a\not=0$ and $b\not=0$.\label{c2}
     \item $aCb$ implies $bCa$.\label{c3}
     \item $aC(b\vee c)$  \tiff  $aCb$ or $aCc$. \label{c4}
\end{enumerate}

\noindent
 Two contact algebras\/ $(B,C)$ and\/
$(B',C')$ are said to be\/ {\em isomorphic}
if there exists a {\em CA-isomorphism} between them, {\em{i.e.}}, a Boolean
isomorphism $\varphi: B\longrightarrow B'$ such that, for all
$a,b\in B$, $aCb$ if and only if $\varphi(a)C' \varphi(b)$.

With  $-C$ denoting the set complement of $C$ in $B \times B$,
we shall consider two more properties of contact algebras:

\begin{enumerate}
\renewcommand{\theenumi}{\ensuremath{({\rm C}\arabic{enumi})}}
\setcounter{enumi}{4}
    \item If $a(-C)b$ then $a(-C)c$ and $b(-C)c^*$ for some
$c\in B$.\label{c5}
    \item  If $a\not= 1$ then there exists $b\not= 0$ such that
$b(-C)a$.  \label{c6}
\end{enumerate}
A contact algebra $(B,C)$ is called a {\it  Boolean normal contact  algebra}\/ or, briefly, a {\it  normal contact algebra}
(abbreviated as NCA) \cite{deV,F} if it satisfies \ref{c5} and \ref{c6}.
(Note that if $0\neq 1$, then \ref{c2} follows from the axioms \ref{c4}, \ref{c3}, and \ref{c6}.)
\end{defi}
The notion of normal contact algebra was introduced by Fedorchuk \cite{F} under the name {\em Boolean $\d$-algebra}\/, as an equivalent
expression of the notion of {\em compingent Boolean algebra} by de Vries (see the definition below). We call such algebras ``normal contact algebras'' because they form a subclass of the class of contact algebras which naturally arise in the context of
normal Hausdorff spaces (see \cite{DV1}).

\begin{defi}\label{def:ll}
\rm
For a contact algebra $(B,C)$ we define a binary relation  $\ll_C $ on $B$, called {\em non-tangential inclusion},  by
\begin{gather*}
\ a \ll_C b \text{ \tiff} a(-C)b^*.
\end{gather*}
%
If $C$ is understood, we shall  simply write
$\ll$ instead of $\ll_C$.
\end{defi}

The relations $C$ and $\ll$ are inter-definable. For example,
normal contact algebras may be defined equivalently -- and exactly
in this way they were introduced under the name of {\em
compingent Boolean algebras}\/ by de Vries in \cite{deV} -- as a pair consisting
of a Boolean algebra $B$ and a binary relation $\ll$ on $B$ satisfying the following axioms:

\begin{enumerate}
\renewcommand{\theenumi}{($\ll$\arabic{enumi})}
\item $a \ll b \timplies a \leq b$. \label{di1}
\item $0\ll 0$. \label{di2}
\item  $a\leq b\ll c\leq t$ implies $a\ll t$. \label{di3}
\item $a\ll c$ and $b\ll c$ implies $a\vee b\ll c$. \label{di4}
\item If  $a\ll c$ then $a\ll b\ll c$  for some $b\in B$. \label{di5}
\item  If $a\neq 0$ then there exists $b\neq 0$ such that $b\ll
a$. \label{di6}
\item$a\ll b$ implies $b^*\ll a^*$. \label{di7}
\end{enumerate}

Indeed, if $(B,C)$ is an NCA, then the relation $\ll_C$ satisfies the axioms \ref{di1} -- \ref{di7}.  Conversely,
having
 a pair $(B, \ll)$, where $B$ is a Boolean algebra and $\ll$ is a binary relation  on $B$ which satisfies \ref{di1} -- \ref{di7}, we define a relation
$C_\ll$
by $aC_\ll b$ \tiff $a\not\ll b^*$; then $(B,C_\ll)$ is an NCA. Note that the axioms \ref{c5} and \ref{c6} correspond to \ref{di5} and \ref{di6}, respectively. It is easy to see that a contact algebra could be equivalently defined as a
pair consisting of a Boolean algebra $B$ and a binary relation $\ll$ on $B$
subject to the  axioms \ref{di1} -- \ref{di4} and \ref{di7}.

The most important example of a CA is given by the regular closed sets of an arbitrary topological space $X$. Let us start with some standard notations and conventions that we use throughout the paper.
For a subset $M$ of $X$, we
denote by $\cl_X(M)$ (or simply
$\cl(M)$) the closure of $M$ in $X$, and by
 $\int(M)$ its
interior.
$\CO(X)$ denotes the set of
all clopen (= closed and open) subsets of $X$; trivially, $(\CO(X),\cup,\cap,\stm,\ems, X)$ is a Boolean algebra.
$\RC(X)$ (resp., $\RO(X)$) denotes the set of all
regular closed (resp., regular open) subsets of $X$; recall that a subset $F$ of $X$ is
said to be {\em regular closed}\/ (resp., {\em regular open}\/) if $F=\cl(\int(F))$ (resp., $F=\int(\cl(F))$)).

Note that in this paper (unlike in \cite{E}) compact spaces are not assumed to be Hausdorff.

\begin{exa}\label{rct}
\rm For a  topological space $X$, the collection $\RC(X)$  becomes a complete Boolean algebra
under the operations
\begin{align*}
F\vee G &\df F\cup G, & F\we G &\df \cl(\int(F\cap G)), & F^* &\df  \cl(X\stm F), & 0 &\df  \emptyset, & 1 &\df  X.
\end{align*}
 The infinite operations are given by the  formulas
 \begin{align*}
 \bigvee\{F_\g\st \g\in\GA\} &=  \cl(\bigcup_{\g\in\GA}F_\g)\
(=\cl(\bigcup_{\g\in\GA}\int(F_\g))=\cl(\int(\bigcup_{\g\in\GA}F_\g))), \\
\bigwedge\{F_\g\st \g\in\GA\} &=  \cl(\int(\bigcap\{F_\g\st
\g\in\GA\})).
 \end{align*}

\noindent One defines the  relation $\rho_X$ on $\RC(X)$ by setting, for each $F,G\in \RC(X)$,
 \begin{gather*}
 F \rho_XG \mbox{  \tiff  } F\cap G\neq \ems.
 \end{gather*}
 Clearly, $\rho_X$ is a contact relation on $\RC(X)$, called the {\em standard contact relation of $X$}. The  complete CA $(\RC(X),\rho_X)$  is called a {\em standard contact algebra}.
 Note that, for $F,G\in \RC(X)$,
 \begin{gather*}
 F\ll_{\rho_X}G \mbox{  \tiff  } F\sbe\int_X(G).
 \end{gather*}
Thus, if $X$ is a normal Hausdorff space then the standard contact algebra $(\RC(X),\rho_X)$ is a complete NCA.
\end{exa}

\begin{exa}\label{extrcr}
\rm Let $B$ be a Boolean algebra. Then there exist a largest and a
smallest contact relation on $B$; the largest one, $\rho_l$, is
defined by $$a\rho_l b \iff (a\neq 0\mbox{ and }b\neq 0),$$ and the
smallest one, $\rho_s$, by $$a\rho_s b \iff a\wedge b\neq 0.$$

\noindent Note that, for $a,b\in B$, $$a\ll_{\rho_s} b \iff a\le b;$$ hence
$a\ll_{\rho_s} a$, for any $a\in B$. Thus $(B,\rho_s)$ is a normal
contact algebra.
\end{exa}

We will need the following definition and assertion from \cite{DV1}:

\begin{defi}\label{relr}{\rm (\cite{DV1})}
\rm
For a contact algebra $(B,C)$ one defines the relation $R_{(B,C)}$ on the set of all filters on $B$ by
\begin{equation}\label{rcrel}
f\, R_{(B,C)}\, g$ \tiff $f\times g\sbe C,
\end{equation}
for all filters $f,g$ on $B$.
\end{defi}

\begin{pro}\label{relrult}
{\rm{(a)}} {\rm (\cite[Lemma 3.5, p. 222]{DV1})} \/ Let $(B,C)$ be a contact algebra. Then, for all $a,b\in B$, one has $a C b$ \tiff there exist ultrafilters $u,v$ in $B$ such that $a\in u$, $b\in v$ and $uR_{(B,C)} v$.

\medskip

\noindent{\rm{(b)}} {\rm (\cite{DV1,DUV})} \/ If $(B,C)$ is a normal contact algebra, then $R_{(B,C)}$ is an equivalence relation.
\end{pro}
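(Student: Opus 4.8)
I would prove (a) by a Zorn's-Lemma extension argument, and (b) by reducing reflexivity, symmetry and transitivity of $R_{(B,C)}$ to the axioms of a (normal) contact algebra. For (a), the direction ``$\Leftarrow$'' is immediate, since then $(a,b)\in u\times v\sbe C$. For ``$\Rightarrow$'', I would first record that, by \ref{c4} together with \ref{c3}, the relation $C$ is monotone in each variable — if $aCb$ and $b\le b'$, then $b'=b\vee b'$, so $aCb'$ — equivalently, $-C$ is downward closed in each variable. Assuming $aCb$, axiom \ref{c2} gives $a\ne 0\ne b$, so the principal filters $\uparrow a$, $\uparrow b$ are proper and, by monotonicity, $\uparrow a\times\uparrow b\sbe C$. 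Then I would partially order the set of all pairs $(f,g)$ of proper filters on $B$ with $\uparrow a\sbe f$, $\uparrow b\sbe g$ and $f\times g\sbe C$ componentwise by inclusion; this set is non-empty (it contains $(\uparrow a,\uparrow b)$) and closed under unions of chains, so Zorn's Lemma supplies a maximal pair $(u,v)$.

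The heart of the argument is to show that $u$ (and, by symmetry, $v$) is an ultrafilter. Suppose not, and pick $c$ with $c\notin u$ and $c^*\notin u$; let $u_1,u_2$ be the proper filters generated by $u\cup\{c\}$ and $u\cup\{c^*\}$, each properly containing $u$. If neither $(u_1,v)$ nor $(u_2,v)$ belonged to the poset, one could choose $d_1,d_2\in u$ and $y_1,y_2\in v$ with $(d_1\we c)(-C)y_1$ and $(d_2\we c^*)(-C)y_2$ (reducing the first entries to such generators is legitimate because $-C$ is downward closed). Setting $d:=d_1\we d_2\in u$ and $y:=y_1\we y_2\in v$, downward closure gives $(d\we c)(-C)y$ and $(d\we c^*)(-C)y$; since $(d\we c)\vee(d\we c^*)=d$, axiom \ref{c4} (with \ref{c3}) forces $d(-C)y$, contradicting $u\times v\sbe C$. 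Hence one of $(u_1,v)$, $(u_2,v)$ lies in the poset and properly extends $(u,v)$, against maximality. So $u,v$ are ultrafilters with $a\in u$, $b\in v$ and $uR_{(B,C)}v$, as required.

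For (b): symmetry of $R_{(B,C)}$ is immediate from \ref{c3}; reflexivity on proper filters holds because, for $x,y$ in a proper filter $f$, the element $x\we y\in f$ is nonzero, so $(x\we y)C(x\we y)$ by \ref{c1}, whence $xCy$ by monotonicity. For transitivity I would use normality together with the primeness of ultrafilters: given ultrafilters $u,v,w$ with $uR_{(B,C)}v$ and $vR_{(B,C)}w$ and elements $a\in u$, $c\in w$, suppose $a(-C)c$; by \ref{c5} there is $c_0$ with $a(-C)c_0$ and $c(-C)c_0^*$, and since $v$ is an ultrafilter, either $c_0\in v$ — forcing $aCc_0$, as $uR_{(B,C)}v$ — or $c_0^*\in v$ — forcing $c_0^*Cc$, hence $cCc_0^*$, as $vR_{(B,C)}w$; both are impossible. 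Thus $uR_{(B,C)}w$, and $R_{(B,C)}$ restricts to an equivalence relation on the set $\Ult(B)$ of ultrafilters of $B$.

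I expect the ultrafilter-extension step in (a) to be the main obstacle: one must manufacture two \emph{compatible} ultrafilters from a single contact $aCb$, and this is exactly where the Zorn argument and axiom \ref{c4} (to run the $c$ versus $c^*$ case analysis that forces a maximal pair to consist of ultrafilters) are needed; the remaining bookkeeping is routine. In (b) the only substantive point is transitivity, which rests on the interpolation axiom \ref{c5} and on ultrafilters being prime.
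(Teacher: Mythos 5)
Your proof is correct. The paper does not prove Proposition \ref{relrult} at all --- it imports it from \cite{DV1} and \cite{DUV} --- and your argument is essentially the standard one from those sources: for (a), the Zorn extension of the pair of principal filters $(\uparrow\! a,\uparrow\! b)$ in the poset of compatible filter pairs, with the $c$ versus $c^*$ case analysis via \ref{c4} (and downward closure of $-C$) showing that a maximal pair consists of ultrafilters; for (b), symmetry from \ref{c3}, reflexivity from \ref{c1} and monotonicity, and transitivity from the interpolation axiom \ref{c5} together with primeness of ultrafilters. One further point in your favour: reading (b) as asserting that $R_{(B,C)}$ is an equivalence relation on the set of \emph{ultrafilters} is indeed the intended (and only tenable) interpretation, since on arbitrary filters transitivity fails (any two proper filters are $R$-related through the filter $\{1\}$), and the ultrafilter reading is exactly how the paper uses the result in Lemma \ref{lem2}.
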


\begin{defi}\label{defcluclan}
\rm  For CA $(B,C)$, a non--empty subset $\s$ of $B$ is called a \emph{cluster} if for all $x,y \in B$,
%
\begin{enumerate}
\renewcommand{\theenumi}{(CL\arabic{enumi})}
\item\label{cl1}  If $x,y\in\s$ then $xCy$;.
\item\label{cl2} If $x\vee y\in\s$ then $x\in\s$ or
$y\in\s$.
\item\label{cl3} If $xCy$ for every $y\in\s$, then $x\in\s$.
\end{enumerate}

\noindent The set of all clusters in an NCA $(B,C)$ is denoted by $\CL(B,C)$
\end{defi}

The next theorem is used  later on and may be proved exactly as Theorem 5.8 of \cite{NW}:

\begin{theorem}\label{conclustth}
A subset $\s$ of a normal contact algebra $(B,C)$ is a cluster \tiff
there exists an ultrafilter $u$ in $B$ such that
\begin{equation}\label{ultclu}
\ \s=\{a\in B\st aCb \mbox{ for every } b\in u\}.
\end{equation}
Moreover, given $\s$ and $a_0\in \s$, there exists an ultrafilter
$u$ in $B$ satisfying {\em (\ref{ultclu})} and containing $a_0$.
\end{theorem}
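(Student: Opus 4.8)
The statement to prove is Theorem \ref{conclustth}: a non-empty subset $\s$ of a normal contact algebra $(B,C)$ is a cluster if and only if $\s=\{a\in B\st aCb\mbox{ for every }b\in u\}$ for some ultrafilter $u$ in $B$, and moreover $u$ may be chosen to contain any prescribed $a_0\in\s$. The plan is to follow the pattern of the proof of Theorem 5.8 in \cite{NW}, adapting the point-set-proximity argument to the Boolean-algebra setting, with the key technical input being Proposition \ref{relrult} (the ultrafilter characterization of $C$ and the fact that $R_{(B,C)}$ is an equivalence relation on filters for an NCA).

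\emph{Sufficiency.} First I would fix an ultrafilter $u$ and set $\s_u=\{a\in B\st aCb\mbox{ for all }b\in u\}$, and verify the three cluster axioms. Axiom \ref{cl3} is immediate from the definition of $\s_u$ together with the observation that $u\sbe\s_u$ (using \ref{c1}, since $u$ is a proper filter so $0\notin u$, whence $bCb$ for every $b\in u$). For \ref{cl2}: if $x\vee y\in\s_u$ but $x\notin\s_u$ and $y\notin\s_u$, pick $b_1,b_2\in u$ with $x(-C)b_1$ and $y(-C)b_2$; then $b_1\wedge b_2\in u$ and, by \ref{c3}--\ref{c4}, $(x\vee y)(-C)(b_1\wedge b_2)$, contradicting $x\vee y\in\s_u$. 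Axiom \ref{cl1} is the substantive point: given $x,y\in\s_u$ I must show $xCy$. By Proposition \ref{relrult}(a) applied to $x\in\s_u$ (so $xCb$ for all $b\in u$, and in particular $x$ lies in $\s_u$, meaning $x\,C\,b$ for each $b$ in the ultrafilter $u$), one extracts an ultrafilter $v$ with $x\in v$ and $v\,R_{(B,C)}\,u$ — more carefully, one uses that $\{x\}\cup u$ generates a filter $f$ with $f\times u\sbe C$, hence $f\,R_{(B,C)}\,u$, and extends; similarly get an ultrafilter $w$ with $y\in w$ and $w\,R_{(B,C)}\,u$. Since $R_{(B,C)}$ is an equivalence relation by Proposition \ref{relrult}(b), $v\,R_{(B,C)}\,w$, so $v\times w\sbe C$, and since $x\in v$, $y\in w$ we conclude $xCy$, as desired.

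\emph{Necessity and the refinement.} Conversely, let $\s$ be a cluster and fix $a_0\in\s$ (this also handles the bare existence claim, since $\s\neq\ems$). I would form the family $u=\{b\in B\st b C x\mbox{ for all }x\in\s\}$ — equivalently, by \ref{cl3}, $u=\{b\in B\st b\in\s\}=\s$ intersected with the "co-contact" condition; in fact axiom \ref{cl3} says precisely that $u=\s$. The real work is to show that $\s$, viewed this way, \emph{contains} an ultrafilter $u_0$ with $a_0\in u_0$ and $\s=\s_{u_0}$. One shows first that $\s$ is "filter-like" in the relevant sense: using \ref{cl2} (primality) and the NCA axioms, the set of finite meets from $\s$ together with upward closure gives a proper filter, and by \ref{cl1} any two elements of $\s$ are in contact, so the filter generated by $\{a_0\}\cup(\s\cap\dots)$ avoids $0$. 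Then extend to an ultrafilter $u_0\ni a_0$ inside (the upward closure appropriate to) $\s$. Finally check $\s=\s_{u_0}$: the inclusion $\s\sbe\s_{u_0}$ follows from \ref{cl1} (every $a\in\s$ is in contact with every element of $u_0\sbe\s$), and $\s_{u_0}\sbe\s$ follows from \ref{cl3} together with Proposition \ref{relrult} exactly as in the sufficiency argument (an element in contact with all of $u_0$ is in contact with all of $\s$, via the $R_{(B,C)}$-equivalence-class argument), hence lies in $\s$.

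\emph{Main obstacle.} The delicate step is the necessity direction, specifically producing the ultrafilter $u_0$ sitting inside $\s$ and containing the prescribed $a_0$: one must verify that the filter generated by the finite meets of elements of $\s$ (or of a suitable sub-collection guaranteeing membership of $a_0$) does not contain $0$, which rests on combining the primality axiom \ref{cl2} with the contact/normality axioms and the clusterhood of $\s$, and then that a Zorn's-lemma extension to an ultrafilter does not leave $\s$. Once the interplay between \ref{cl1}--\ref{cl3} and Proposition \ref{relrult}(a),(b) is set up correctly, everything else is a routine check; the proof is genuinely "exactly as Theorem 5.8 of \cite{NW}" with ${\rm P}(X)$ replaced by $B$ and set-theoretic proximity replaced by the abstract relation $C$.
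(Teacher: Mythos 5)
Your high-level decomposition (verify (CL1)--(CL3) for $\s_u$; conversely locate an ultrafilter inside the cluster $\s$ containing $a_0$) is the right one and matches the intended adaptation of \cite{NW}, Theorem 5.8, but two of your key steps are wrong as stated. First, in the sufficiency direction your verification of (CL1) rests on the claim that $\{x\}\cup u$ generates a filter $f$ with $f\times u\sbe C$, which then extends to an ultrafilter $v\ni x$ with $vR_{(B,C)}u$. This fails: contact does not imply overlap, so $x\we b$ may well be $0$ for some $b\in u$ (making the generated filter improper), and even when all meets are nonzero, $(x\we b)Cb'$ does not follow from $xCb''$ for all $b''\in u$. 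Nor does Proposition \ref{relrult}(a) deliver a single ultrafilter $v\ni x$ with $v\times u\sbe C$; such an extraction is true but needs its own Zorn-type maximality argument (starting from the principal filter $\uparrow\! x$), which you do not give. The intended proof avoids all of this: if $x,y\in\s_u$ and $x(-C)y$, axiom (C5) yields $c$ with $x(-C)c$ and $y(-C)c^*$; since $u$ is an ultrafilter, $c\in u$ or $c^*\in u$, contradicting $x\in\s_u$ resp.\ $y\in\s_u$. The same (C5)-plus-dichotomy trick is what gives $\s_u\sbe\s$ in the converse, again without any appeal to $R_{(B,C)}$.

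Second, and more seriously, in the necessity direction your construction of the ultrafilter is broken: ``the set of finite meets from $\s$ together with upward closure gives a proper filter'' is false, since a cluster is not closed under meets --- e.g.\ in $(\RC(X),\rho_X)$ two regular closed sets meeting only in a single boundary point $x$ both belong to the point-cluster $\s_x$ yet have meet $\ems$. What is needed instead is a maximal filter \emph{contained in} $\s$ and containing $a_0$: note $\uparrow\! a_0\sbe\s$ (clusters are upward closed by (CL1)/(CL3)), apply Zorn to the filters $f$ with $a_0\in f\sbe\s$, and show a maximal such $u$ is an ultrafilter via (CL2): if $c\nin u$ and $c^*\nin u$, maximality gives $d,d'\in u$ with $d\we c\nin\s$ and $d'\we c^*\nin\s$, whence, by upward closedness of $\s$, $(d\we d'\we c)\nin\s$ and $(d\we d'\we c^*)\nin\s$, contradicting (CL2) applied to $d\we d'=(d\we d'\we c)\vee(d\we d'\we c^*)\in\s$. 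This Zorn-plus-(CL2) mechanism, together with the (C5) argument for $\s_u\sbe\s$, is exactly the content you flag as the ``main obstacle'' but do not supply; without it the proposal does not constitute a proof.
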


\begin{cor}\label{uniqult}
Let $(B,C)$ be a normal contact algebra and $u$ be an ultrafilter
 in $B$. Then there exists a unique
cluster $\s_u$ in $(B,C)$ containing $u$, namely
\begin{equation}\label{sigmau}
\ \s_u=\{a\in B\st aCb \mbox{  for every } b\in u\}.
\end{equation}
\end{cor}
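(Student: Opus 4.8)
The plan is to deduce Corollary \ref{uniqult} as a uniqueness refinement of Theorem \ref{conclustth}. I would check two things: that the set $\s_u$ displayed in (\ref{sigmau}) is a cluster containing the given ultrafilter $u$ (existence), and that no other cluster contains $u$ (uniqueness).

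For existence, I would observe that $\s_u=\{a\in B : aCb\text{ for every }b\in u\}$ is a cluster directly by the ``if'' direction of the equivalence in Theorem \ref{conclustth}, applied with the ultrafilter occurring there taken to be $u$ itself. To get $u\sbe\s_u$, I would fix $b_0,b\in u$; then $b_0\we b\in u$, so $b_0\we b\neq 0$ because $u$ is a proper filter, and hence $b_0Cb$ since $a\we b\neq 0$ always implies $aCb$ (the relation $\rho_s$ of Example \ref{extrcr} is the smallest contact relation on $B$; equivalently, this follows from \ref{c1} and monotonicity of $C$). Thus $b_0\in\s_u$. One could just as well note that $R_{(B,C)}$ is reflexive, being an equivalence relation by Proposition \ref{relrult}(b), so $u\,R_{(B,C)}\,u$, i.e. $u\times u\sbe C$, which is precisely the statement $u\sbe\s_u$.

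For uniqueness, let $\tau$ be any cluster with $u\sbe\tau$; I would show $\tau=\s_u$. The inclusion $\tau\sbe\s_u$ is immediate from \ref{cl1}: if $a\in\tau$, then $aCb$ for every $b\in u\sbe\tau$, so $a\in\s_u$. For the reverse inclusion $\s_u\sbe\tau$, I would argue by contradiction: suppose $a\in\s_u\stm\tau$. By the contrapositive of \ref{cl3} there is some $y\in\tau$ with $a(-C)y$, and then the normality axiom \ref{c5} supplies $c\in B$ with $a(-C)c$ and $y(-C)c^*$. Since $u$ is an ultrafilter, $c\in u$ or $c^*\in u$. If $c\in u$, then $a\in\s_u$ forces $aCc$, contradicting $a(-C)c$; if $c^*\in u$, then $c^*\in\tau$ and $y\in\tau$, so \ref{cl1} forces $yCc^*$, contradicting $y(-C)c^*$. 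Either way we reach a contradiction, so $\s_u\sbe\tau$ and therefore $\tau=\s_u$.

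The only step carrying real content is the inclusion $\s_u\sbe\tau$ in the uniqueness part, and that is exactly where normality (axiom \ref{c5}) together with the ultrafilter dichotomy $c\in u$ or $c^*\in u$ is used; everything else is a direct appeal to Theorem \ref{conclustth} and routine manipulation of the contact and cluster axioms. I therefore do not anticipate any genuine obstacle here.
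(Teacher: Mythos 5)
Your argument is correct, and in substance it is the derivation the corollary is meant to have: existence comes from the ``if'' direction of Theorem \ref{conclustth} applied to the given ultrafilter $u$, plus the observation that $u\times u\sbe C$ (equivalently, reflexivity of $R_{(B,C)}$ from Proposition \ref{relrult}(b), or the minimality of $\rho_s$ in Example \ref{extrcr}), which gives $u\sbe\s_u$; and uniqueness is a routine cluster computation. The only place you take a different route is the inclusion $\s_u\sbe\tau$: you prove it by hand, playing the normality axiom (C5) against the ultrafilter dichotomy $c\in u$ or $c^*\in u$, whereas the quickest derivation in the paper's own framework is to note $\tau\sbe\s_u$ by (CL1) and then invoke Fact \ref{fact29} (a cluster contained in a cluster equals it), which reduces uniqueness to two lines. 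Your direct argument is essentially a special case of the proof of Fact \ref{fact29} (there the dichotomy $c\in\tau$ or $c^*\in\tau$, available because $1$ lies in every cluster, plays the role of your ultrafilter dichotomy), and it has the mild advantage of avoiding a forward reference, since Fact \ref{fact29} is only recorded after the corollary. Either way the proof is complete; there is no gap.
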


The following simple result can be proved exactly as
Lemma 5.6 of \cite{NW}:

 \begin{fact}\label{fact29}
 Let $(B,C)$ be a normal contact algebra and $\s_1$, $\s_2$
 clusters in $(B,C)$. If  $\s_1\sbe \s_2$, then $\s_1=\s_2$.
 \end{fact}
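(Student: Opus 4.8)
The plan is to prove Fact \ref{fact29} by exploiting the characterization of clusters via ultrafilters provided by Theorem \ref{conclustth} and Corollary \ref{uniqult}, rather than arguing directly from the cluster axioms \ref{cl1}--\ref{cl3}. Assume $\s_1\sbe\s_2$ and suppose, for contradiction, that there exists some $a_0\in\s_2\stm\s_1$.

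First I would use Theorem \ref{conclustth} applied to $\s_2$ and the element $a_0\in\s_2$: there exists an ultrafilter $u$ in $B$ with $\s_2=\{a\in B\st aCb\mbox{ for every }b\in u\}$ and $a_0\in u$. Next, I would like to produce an ultrafilter $w$ witnessing that $a_0\notin\s_1$. By the cluster axiom \ref{cl3} applied to $\s_1$, from $a_0\notin\s_1$ we get some $b_0\in\s_1$ with $a_0(-C)b_0$. Now apply Theorem \ref{conclustth} to $\s_1$ and $b_0\in\s_1$ to obtain an ultrafilter $v$ with $\s_1=\{a\in B\st aCb\mbox{ for every }b\in v\}$ and $b_0\in v$. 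Since $\s_1\sbe\s_2$, every element of $v$ lies in $\s_2$, hence every element of $v$ is in contact with every element of $u$; in the language of Definition \ref{relr} this says $v\,R_{(B,C)}\,u$. By Proposition \ref{relrult}(b), $R_{(B,C)}$ is an equivalence relation on filters, so in particular (using Proposition \ref{relrult}(a) for the symmetric/transitive bookkeeping, or directly) $u\,R_{(B,C)}\,v$, which unpacks to: $aCb$ for all $a\in u$, $b\in v$. But $a_0\in u$ and $b_0\in v$, so $a_0Cb_0$, contradicting $a_0(-C)b_0$.

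Alternatively — and perhaps more cleanly — I would avoid the element $b_0$ entirely: from $\s_1\sbe\s_2$ and the ultrafilter $v$ with $b_0\in v$ giving $\s_1$ via \eqref{sigmau}, Corollary \ref{uniqult} says $\s_1$ is \emph{the} unique cluster containing $v$. If I can show $v\sbe\s_2$ as well, then the uniqueness clause forces $\s_1=\s_2$ and we are done. To see $v\sbe\s_2$: for $b\in v$ we have $b\in\s_1\sbe\s_2$ automatically, since $v\sbe\s_1$ (indeed, if $b\in v$ then $bCc$ for all $c\in v$ by \ref{cl1} applied inside $\s_1\supseteq v$, wait — more carefully, $v\sbe\s_1$ holds because any $b\in v$ satisfies: for every $c\in v$, $b\wedge c\neq 0$ hence $bCc$ by \ref{c2}'s converse direction via \ref{c1}, so $b\in\{a\st aCc\ \forall c\in v\}=\s_1$). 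Thus $v\sbe\s_1\sbe\s_2$, and since $\s_2$ is a cluster containing the ultrafilter $v$, Corollary \ref{uniqult} gives $\s_2=\s_u$ for $u=v$, i.e. $\s_2=\s_1$.

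The main obstacle I anticipate is the bookkeeping around which ultrafilter generates which cluster and in which direction the relation $R_{(B,C)}$ is being used — Corollary \ref{uniqult} guarantees uniqueness of the cluster containing a \emph{given} ultrafilter, so the argument hinges on exhibiting a single ultrafilter sitting inside both $\s_1$ and $\s_2$. The verification that $v\sbe\s_1$ (an ultrafilter is contained in the cluster it generates) is the one slightly delicate point, but it follows immediately from axioms \ref{c1} and the finite intersection property of the filter $v$. Once that is in place, Fact \ref{fact29} is an instant consequence of the uniqueness in Corollary \ref{uniqult}, with no need to invoke that $R_{(B,C)}$ is an equivalence relation at all.
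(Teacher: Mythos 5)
Your proof is correct, but it takes a genuinely different --- and much heavier --- route than the one the paper intends. The paper's proof (it refers to Lemma 5.6 of \cite{NW}) is a two-line argument straight from the cluster axioms and needs neither ultrafilters nor normality: if $a\in\s_2$, then every $b\in\s_1$ also lies in $\s_2$, so $aCb$ by \ref{cl1} applied inside $\s_2$; hence $a$ is in contact with every member of $\s_1$, so \ref{cl3} gives $a\in\s_1$, whence $\s_2\sbe\s_1$ and equality (this works in any contact algebra). Your argument instead routes everything through Theorem \ref{conclustth} and Corollary \ref{uniqult}. Both of your variants do work, the second being the cleaner one: any ultrafilter $v$ generating $\s_1$ satisfies $v\sbe\s_1\sbe\s_2$, and the uniqueness clause of Corollary \ref{uniqult} then forces $\s_1=\s_2$. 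What your approach buys is a picture of clusters as $\s_u$'s; what it costs is reliance on the ultrafilter machinery, plus two small blemishes: (i) the auxiliary fact that an ultrafilter is contained in the cluster it generates is true, but your justification is off --- it is not a ``converse of \ref{c2}'' (that converse is false in general), but the standard fact that $b\wedge c\neq 0$ implies $bCc$, obtained from \ref{c1} applied to $b\wedge c$ together with the monotonicity of $C$ that follows from \ref{c3} and \ref{c4} (equivalently, $\rho_s\sbe C$ as in Example \ref{extrcr}); and (ii) in your first variant the step ``every element of $v$ lies in $\s_2$'' silently uses this same inclusion $v\sbe\s_1$, which you only verify later.

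One further caution about logical economy: the uniqueness part of Corollary \ref{uniqult} is itself most naturally proved by exactly the maximality statement you are proving (a cluster containing $u$ is contained in $\s_u$ by \ref{cl1} and the definition of $\s_u$, and then maximality finishes), so invoking it here risks circularity unless Theorem \ref{conclustth} and Corollary \ref{uniqult} are established independently of Fact \ref{fact29} --- which is possible, along the lines of the proof of Theorem 5.8 in \cite{NW}, but must be checked. The direct axiomatic proof avoids the issue entirely and is shorter.
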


\begin{nota}\label{sigmax}
\rm
 For a topological space $(X,\tau)$ and $x\in X$, we
set
\begin{equation}\label{sxvx}
\  \s_x^X=\{F\in RC(X)\st x\in F\}
\end{equation}
and often write just $\s_x$.
\end{nota}

The next assertion is obvious:

\begin{fact}\label{sxcluster}
For a regular topological space $X$,  $\s_x$ is a
cluster in the CA $(RC(X),\rho_X)$, called a {\em point-cluster}.
\end{fact}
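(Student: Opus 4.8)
The plan is to verify the three cluster axioms \ref{cl1}--\ref{cl3} for the set $\s_x=\{F\in\RC(X)\st x\in F\}$ in the standard contact algebra $(\RC(X),\rho_X)$, where $X$ is regular. First note that $\s_x$ is non-empty, since $X\in\RC(X)$ and $x\in X$. For \ref{cl1}, suppose $F,G\in\s_x$; then $x\in F\cap G$, so $F\cap G\nes$, which is precisely $F\rho_X G$. For \ref{cl2}, suppose $F\vee G\in\s_x$; since the join in $\RC(X)$ is $F\cup G$ (see Example \ref{rct}), we have $x\in F\cup G$, hence $x\in F$ or $x\in G$, i.e. $F\in\s_x$ or $G\in\s_x$.

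The only axiom requiring an argument is \ref{cl3}: if $F\in\RC(X)$ satisfies $F\rho_X G$ for every $G\in\s_x$, then $F\in\s_x$. I would prove the contrapositive. Suppose $x\nin F$. Since $F$ is closed and $X$ is regular (and $\{x\}$ is closed, though regularity already separates a point from a disjoint closed set), there exist disjoint open sets $U,V$ with $x\in U$ and $F\sbe V$. Then $G\df\cl_X(U)\in\RC(X)$ — more precisely, I take $G=\cl_X(\int_X(\cl_X(U)))$, but one checks $\cl_X(U)$ is already regular closed when $U$ is open, so $G=\cl_X(U)$ works — and $x\in U\sbe G$, so $G\in\s_x$. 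On the other hand $F\cap G=F\cap\cl_X(U)\sbe V\cap\cl_X(U)=\ems$, because $\cl_X(U)\sbe X\stm V$ (as $U\sbe X\stm V$ and $X\stm V$ is closed) and $F\sbe V$. Hence $F(-\rho_X)G$ with $G\in\s_x$, so $F$ fails the hypothesis of \ref{cl3}. This establishes \ref{cl3}.

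There is essentially no obstacle here; the statement is indeed ``obvious'' as the paper says, and the only point deserving care is the standard observation that the closure of an open set is regular closed, which is what lets us produce a witness $G\in\s_x$ disjoint from $F$ using regularity of $X$. One might remark that this fact is the CA-analogue of the classical statement that in a proximity space the point-neighborhoods form a cluster, and that regularity of $X$ is exactly what is needed for the separation step in \ref{cl3}.
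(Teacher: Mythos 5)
Your proof is correct: the paper itself offers no argument (it simply labels the fact ``obvious''), and your verification of (CL1)--(CL3) is exactly the routine check intended, with the only substantive points being the regularity-based separation of $x$ from $F$ and the standard observation that $\cl_X(U)$ is regular closed for $U$ open, both of which you handle correctly.
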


For a category $\CC$, we denote by $\card\CC$ its class of objects, by $\Mor(\CC)$ its class of morphisms, and by $\CC(X,Y)$ the set of all
   $\CC$-morphisms $X\lra Y$.

\begin{nist}\label{Std}
\rm
 Let us fix the notation for the Stone Duality (\cite{ST,kop89}).  We denote by $\Stone$ the category of all zero-dimensional compact Hausdorff  spaces (= {\em Stone spaces}) and their continuous mappings, and by $\Bool$ the category of  Boolean algebras and Boolean  homomorphisms.
 The contravariant functors
furnishing the Stone duality
are denoted by $$S^a:{\Bool}\lra{\Stone}\ \  \mbox{ and }\ \  S^t:\Stone\lra\Bool.$$
Hence, for $A\in\card{\Bool}$,
$S^a(A)$  is the set  $\ult (A)$ of all
ultrafilters in $A$ endowed with the topology whose open base is
the family
$\{s_A(a)\st a\in A\}$, where $$s_A(a)\df\{u\in \ult (A)\st a\in u\}$$
for all $a\in A$.
For $X\in\card{\Stone}$, one sets $S^t(X)\df \CO(X),$ and
for morphisms $f\in\Stone(X,Y)$ and $\p\in\Bool(B_1,B_2)$
one puts
$$S^t(f)(F)=f\inv(F)\text{ and }S^a(\p)(u)=\p\inv(u)
$$
 for all $F\in \CO(Y)$ and $u\in \ult (B_2)$.
Now, for every Boolean algebra $A$, the map
$$s_A:A\lra S^t(S^a(A)), \ a\mapsto s_A(a),$$
is a Boolean
isomorphism, and for every Stone space $X$,
the map
$$t_X:X\lra S^a(\CO(X)), \ \ x\mapsto u_x,$$
is a homeomorphism; here,
for every $x\in X$,

\begin{equation}\label{ux}
u_x\df\{P\in {\rm{CO}}(X)\st x\in P\}.
\end{equation}
Moreover, $s_A$ and $t_X$ are natural in $A$ and $X$.
\end{nist}

\begin{nist}\label{niab}
\rm

Let us recall some standard properties for a continuous map of topological spaces: $f:X\lra Y$ is

\begin{itemize}

\item {\em closed}\/ if the image of each closed set is closed;

\item {\em open}\/ if the image of each open set is open;

\item {\em perfect}\/ if it is closed and has compact fibres; 

\item {\em
quasi-open\/} (\cite{MP}) if $\int(f(U))\nes$ for every non-empty open subset $U$
of $X$;


\item {\em skeletal}\/ (\cite{MR})
if for every open subset  $V$  of $Y$%
\begin{equation}\label{ske}
\ \int(f\inv(\cl (V)))\sbe\cl(f\inv(V));
\end{equation}

\item
{\em irreducible}\/ if $f(X)=Y$ and if, for every proper closed
subset $F$ of $X$, $f(F)\neq Y$.
\end{itemize}

 Recall  that, for a regular space $X$, a space $EX$ is called an
{\em absolute of} $X$ if there exists a perfect irreducible mapping $\pi_X : EX\lra X$
and every perfect irreducible preimage of $EX$ is homeomorphic to $EX$ (see,
{\em e.g.}, \cite{ArP,PS}).  It is well-known that:

\medskip

\noindent(a) the absolute is unique up to
homeomorphism;

\medskip

\noindent(b) a space $Y$ is an absolute of a regular space $X$ \tiff $Y$ is
an extremally disconnected Tychonoff space for which there exists a perfect
irreducible mapping $\pi: Y\lra X$; such mappings $\pi$ are called {\em projective covers of} $X$;

\medskip

\noindent(c) if $X$ is a compact Hausdorff space, then it is well-known that
$EX = S^a(\RC(X))$  and the projective cover $\pi_X$ of $X$ is defined by $$\pi_X(u)\df \bigcap u,$$ for every $u\in \ult (\RC(X))\ (=\ S^a(\RC(X))$ (here $S^a:\Bool\lra \Stone$ is the Stone contravariant functor).
\end{nist}

\begin{nist}\label{nipr}
{\rm
Let $\CC$ be a subcategory of the category $\Top$ of all topological spaces and all continuous mappings between them.
Recall that a $\CC$-object $P$ is called a {\em projective object}\/ in $\CC$ if for every $g\in\CC(P,Y)$ and every
perfect surjection $f\in\CC(X,Y)$, there exists  $h\in\CC(P,X)$ such that $f\circ h=g$.

A. M. Gleason \cite{Gle} proved:}

\medskip

In the category $\HC$ of compact Hausdorff spaces and continuous mappings, the projective objects are precisely the extremally disconnected spaces.
\end{nist}

\section{Extensions of dualities}
\begin{nist}\label{intro}
\rm
Given a dual equivalence $T:\AA\lra\BB$ and an embedding $I$ of $\BB$ as a full subcategory of a category $\CC$, we wish to give a {\em natural construction} for a category $\DD$ into which $\AA$ may be fully embedded via $J$, such that $T$ extends to a dual equivalence $\tilde{T}:\DD\lra\CC$:

\begin{center}
$\xymatrix{\DD\ar[r]^{\tilde{T}} & \CC\\
            \AA\ar[r]_{T}\ar[u]^{J} & \BB\ar[u]_{I}}$
            \end{center}
Our construction depends on a class $\PP$ of morphisms of $\CC$ satisfying certain conditions, which are closely related to certain properties of the full embedding $I$. It turns out that, when $\BB$ is coreflective in $\CC$, such a class $\PP$ always exists.

We call a class $\PP$ of morphisms in $\CC$ a $(\BB,\CC)$-{\em covering class} if it satisfies the following conditions (P1-5):
\medskip

\noindent{\rm  (P1)} $\forall \;(p\!:B\lra C)\; \in\,\PP:\;B\in|\BB|$;

\medskip

\noindent{\rm  (P2)} $\forall B\in|\BB|:1_B\in\PP$;

\medskip

\noindent{\rm  (P3)} $\PP\circ {\rm Iso}(\BB)\sbe\PP$;

\medskip

\noindent{\rm (P4)} $\forall C\in |\CC|\;\,\exists\; (p:B\lra C)\in \PP$;

\medskip

\noindent{\rm (P5)} {\em for morphisms in $\CC$, there is a functorial assignment}

\begin{center}
$\quad\quad\quad\xymatrix{B\ar[d]_{p} & B'\ar[d]^{p'}\\
            C\ar[r]_{v} & C'}$
            \hfil\quad\quad\quad
           $ \mapsto$
            \hfil
$\xymatrix{B\ar[r]^{\hat{v}}\ar[d]_{p} & B'\ar[d]^{p'}\\
            C\ar[r]_{v} & C'}$
            \end{center}
$$((p\!:B\!\to\! C)\!\in\!\PP,v\!:C\to C',\,(p'\!:B'\!\to\! C')\!\in\!\PP) \mapsto\ (\hat{v}\!:B\to B'\;with\;v\circ p=p'\circ \hat{v}),$$
\noindent {\em so that}
\begin{center}
$\quad\quad\xymatrix{B\ar[d]_{p} & B'\ar[d]^{p'} & B''\ar[d]^{p''}\\
            C\ar[r]_{v} & C'\ar[r]_{w} & C''}$
            \quad\quad\quad\quad\quad
           $ \mapsto$
            \hfil
$\xymatrix{B\ar[r]_{=\hat{w}\circ \hat{v}}^{\widehat{w\circ v}}\ar[d]_{p} & B''\ar[d]^{p''}\\
            C\ar[r]_{w\circ v} & C''}$
            \end{center}
            \noindent {\em and}
\begin{center}
$\quad\quad\quad\xymatrix{B\ar[d]_{p} & B\ar[d]^{p}\\
            C\ar[r]_{1_{C}} & C}$
            \hfil\quad\quad\quad
           $ \mapsto$
            \hfil
$\xymatrix{B\ar[r]^{\widehat{1_{C}}}_{=1_{B}}\ar[d]_{p} & B\ar[d]^{p}\\
            C\ar[r]_{1_{C}} & C.}$
            \end{center}
\noindent We emphasize  that, in the given assignment, $\hat{v}$ depends not only on $v$, but also on $p$ and $p'$.
Next, we note that, in the presence of (P3), condition (P2) means equivalently

\medskip

\noindent (P2$'$) ${\rm Iso}(\BB)\sbe\PP$.

\medskip

\noindent In condition (P4) we tacitly assume that, for every  $C\in|\CC|$, we have a {\em chosen} morphism $p\in\PP$ with codomain $C$. In the presence of (P2), that morphism may be taken to be an identity morphism whenever $C\in|\BB|$. To emphasize the choice, we may reformulate (P4), as follows:

\medskip

\noindent{\rm (P4$'$)} $\forall C\in |\CC|\;\,\exists\; (\pi_C:EC\lra C)\in \PP$ {\em (with $\pi_C=1_C$ when $C\in |\BB|$)}.

\medskip

\noindent It is now clear that (P5) enables us to make $E$ a functor $\CC\lra\BB$ and $\pi$ a natural transformation $IE\lra{\sf Id}_{\CC}$: for $v:C\lra C'$ in $\CC$, one simply considers the commutative diagram
\begin{center}
$\xymatrix{EC\ar[r]^{Ev\df\hat{v}}\ar[d]_{\pi_C} & EC'\ar[d]^{\pi_{C'}.}\\
            C\ar[r]_{v} & C'.}$
            \end{center}
Then $\pi I$ is an isomorphism, and even the identity transformation $1_I$ if we choose $\pi_B=1_B$ for all $B\in|\BB|$, in which case $EI={\sf Id}_{\BB}$. This proves the ``only if\,"-part of the following proposition; to prove its ``if\,"-part, we need to use again a large choice principle, as follows.

 \medskip

\noindent{\bf Proposition.}\label{functorial} {\em The full subcategory $\BB$ of $\CC$ with inclusion functor $I$ admits a $(\BB,\CC)$-covering class if, and only if, there are a functor $E:\CC\lra\BB$ and a natural transformation $\pi:IE\lra{\sf Id}_{\CC}$, such that $\pi I:IEI\lra I$ is an isomorphism; $E$ and $\pi$ may actually be chosen to satisfy $EI={\sf Id}_{\BB}$ and $\pi I=1_I$.}

\medskip
\doc
For the ``if\,"-part of the assertion, we consider the morphism class
$$\PP_{\pi}\df\{p:B\to C \,{\rm in}\, \CC\,|\,B\in|\BB|,\,p=\pi_C\circ \beta \;{\rm for\;some\; isomorphism\,}\beta:B\lra EC\}.$$
By definition, the domain of every morphism in $\PP_{\pi}$ lies in $\BB$, and pre-composition with any $\BB$-isomorphism gives again a morphism in $\PP_{\pi}$. Since $\pi_B$ with $B\in|\BB|$ is an isomorphism, $1_B\in\PP_{\pi}$ follows. This shows (P1-3), and also (P4) holds trivially. To show (P5), for every $p:B\lra C$ in $\PP_{\pi}$, we choose an isomorphism $\beta_p: B\lra EC$ with $\pi_C\circ\beta_p=p$. Then, for every $v:C\lra C'$ in $\CC$ with $p:B\lra C,\;p':B'\lra C'$ in $\PP$, we put
$$ \hat{v}\df\beta_{p'}^{-1}\circ Ev\circ\beta_p:B\lra B'.$$
The functoriality of this assignment follows from the functoriality of $E$, and the naturality of $\pi$ gives
$p'\circ\hat{v}=p'\circ\beta_{p'}^{-1}\circ Ev\circ\beta_p=\pi_{C'}\circ Ev\circ \beta_p=v\circ\pi_C\circ \beta_p=v\circ p.$
\sqs
\end{nist}

\begin{nist}\label{stronger conditions}
\rm

We note that, starting with a $(\BB,\CC)$-covering class $\PP$, if we first construct $E,\pi$ and then the class $\PP_{\pi}$ as in the proof of Proposition \ref{functorial}, we get back the class $\PP$. Indeed, trivially $\PP_{\pi}\subseteq\PP$; conversely, given $p\in \PP$, one uses the following diagram to show $p=\pi_C\circ\beta$ for an isomorphism $\beta$, so that $p\in\PP_{\pi}$:
\begin{center}
$\xymatrix{B\ar[r]^{\beta\df\widehat{1_C}}\ar[d]_{p} & EC\ar[d]^{\pi_{C}}\\
            C\ar[r]_{1_C} & C.}$
            \end{center}

\noindent Later on, we need to ensure the uniqueness of such an isomorphism $\beta$. For that, it clearly suffices that the morphism $\pi_C$ is {\em rigid}, in the sense that $\pi_C\circ\alpha=\pi_C$ for an isomorphism $\alpha:EC\lra EC$ is possible only for $\alpha=1_{EC}.$ We therefore consider the
following strengthening of condition (P4) or, equivalently, of (P4$'$):

\medskip

\noindent{\rm (P4$^*$)} $\forall C\in |\CC|\;\,\exists\; (\pi_C:EC\lra C)\in \PP,\,\pi_C$ {\em rigid (with $\pi_C=1_C$ when $C\in |\BB|$)}.

\medskip


\noindent We call the $(\BB,\CC)$-covering class $\PP$ {\em rigid} if it satisfies (P4*) (instead of just (P4)).
The natural transformation $\pi:IE\lra{\sf Id}_{\CC}$ as in Proposition \ref{functorial} (so that $EI={\sf Id}_{\BB}$ and $\pi I =1_I$) is called {\em rigid} if every morphism $\pi_C\;(C\in|\CC|)$ is rigid. The dual notion is {\em corigid}.

In the presence of (P1-4), the following condition obviously  implies both, (P4$^*$) and (P5):
\medskip

\noindent {\em ${\rm (P5^*)}$ For all $v:C\lra C'$ in $\CC$ and $p:B\lra C,\; p':B'\lra C'$ in $\PP$, there is precisely one morphism $\hat{v}:B\lra B'$ with $v\circ p=p'\circ\hat{v}$.}

\medskip
 \noindent When $\BB$ is coreflective in $\CC$, the functor $E$ in Proposition \ref{functorial} may be chosen to be the coreflector and $\pi$ the coreflection. Actually, it is obvious that the couniversal property of the coreflections is, in the presence of (P1-4), equivalently expressed by (P5$^*$). In summary, we obtain with Proposition \ref{functorial}:

\medskip

\noindent{\bf Corollary.}\label{coreflective} {\em The full subcategory $\BB$ of $\CC$ admits a pair $E,\pi$ as in {\em Proposition \ref{functorial}} with $\pi$ rigid if, and only if, it admits a rigid $(\BB,\CC)$-covering class. $\BB$ is coreflective in $\CC$ if, and only if, there is a class $\PP$ of morphisms in $\CC$ satisfying properties {\em (P1-4)} and {\em (P5$^*$)}, making $\PP$ in particular a rigid $(\BB,\CC)$-covering class.}

\end{nist}

\begin{nist}\label{construction of D}
\rm
In addition to the full subcategory $\BB$ of $\CC$ with inclusion functor $I$ and a $(\BB,\CC)$-covering class $\PP$ ({\em not} necessarily rigid), so that (according to Proposition \ref{functorial}) there are a functor $E:\CC\lra\BB$ and a natural transformation $\pi:IE\lra {\sf Id}_{\CC}$, with $\pi_B$ an isomorphism for all $B\in |\BB|$,
 let us now consider a dual equivalence $(S,T,\eta,\ep)$ with contravariant functors
 $$T:\AA\lra\BB\quad{\rm and}\quad S:\BB\lra \AA$$
and natural isomorphisms $\eta:{\rm Id}_{\BB}\lra T\circ S\ \mbox{ and }\ \ep:{\rm Id}_{\AA}\lra S\circ T$ which, without loss of generality, may be assumed to satisfy the {\em triangular identities}
$$T\varepsilon_A\circ \eta_{TA}=1_{TA}\quad{\rm and}\quad S\eta_B\circ\varepsilon_{SB}=1_{SB},$$
for all $A\in|\AA|, B\in |\BB|$. We construct the category $\DD$ as envisaged at the beginning of \ref{intro}, as follows:

\begin{itemize}

\item objects in $\DD$ are pairs $(A,p)$ with $A\in|\AA|$ and $p:TA\lra C$ in the class $\PP$;

\item  morphisms $(\p,f):(A,p)\lra(A',p')$ in $\DD$ are given by
morphisms $\p:A\lra A'$ in $\AA$ and  $f:C'\lra C$ in $\CC$,
such that, in the notation of (P5), $T\varphi=\hat{f}$:
\begin{center}
$\xymatrix{TA\ar[d]_{p} & TA'\ar[l]_{T\varphi=\hat{f}}\ar[d]^{p'}\\
            C & C'\ar[l]^{f}}$
            \end{center}

\item    composition is as in $\AA$ and $\CC$; that is, $(\p,f)$ as above gets composed with
$(\p',f'):(A',p')\lra(A'',p'')$ by the horizontal pasting of diagrams, that is,
$$(\p',f')\circ(\p,f)\df (\p'\circ\p,f\circ f').$$

\item the identity morphism of a $\DD$-object $(A,p)$ is the $\DD$-morphism $(1_A,1_{\cod(p)})$.
\end{itemize}

\noindent Of course, the fact that the composition and the identity morphisms of $\DD$ are well defined, relies heavily on (P5). Since $T$ is fully faithful, we note that, for a morphism $(\varphi,f)$ in $\DD$, the $\AA$-morphism $\varphi$ is determined by $\hat{f}$ and, hence, by $f,p,$ and $p'$. As a particular consequence, by (P2), we have a full embedding $J:\AA\lra\DD$, defined by
$$(\p:A\lra A')\mapsto (\;J\p\df(\p,T\p):(A, 1_{TA})\lra(A',1_{TA'})\;);$$
we note that, here,  $T\varphi=\widehat{T\varphi}$ holds trivially. We also note that $J$ has a retraction: the functor $F:\DD\lra\AA$ with $F(\varphi,f)=\varphi$ trivially satisfies $FJ={\sf Id}_{\AA}$.

It is now straightforward to establish a dual equivalence of $\DD$ with $\CC$, as follows:

\medskip

\noindent{\bf Proposition.}\label{mere duality} {\em There is a dual equivalence $\tilde{T}:\DD\lra\CC$ extending the given dual equivalence $T:\AA\lra\BB$, so that $\tilde{T}J=IT$; furthermore, with $E$ as in {\em Proposition \ref{functorial}}, one has $E\tilde{T}\cong TF$:}

\begin{center}
$\xymatrix{\DD\ar[r]^{\tilde{T}} & \CC\\
            \AA\ar[r]_{T}\ar[u]^{J} & \BB\ar[u]_{I}}$
            \hfil
$\xymatrix{\DD\ar[d]_{F}\ar[r]^{\tilde{T}}\ar@{}[rd]|{\cong} & \CC\ar[d]^{E}\\
            \AA\ar[r]_{T} & \BB}$
            \end{center}

\medskip

\doc
The contravariant functor $\tilde T:\DD\to\CC$ is simply the projection
$$(\;(\p,f):(A,p)\lra(A',p')\;)\;\mapsto\;(C\lla C':f).$$
As noted above, since $T$ is fully faithful, $\varphi$ is determined by $f, p, p'$ and the condition $T\varphi=\hat{f}$, which makes $\tilde{T}$ fully faithful as well. Moreover, given $C\in|\CC|$, since $T$ is essentially surjective on objects, one has an isomorphism $\alpha:TA\lra EC$ with some $A\in|\AA|$ and, hence, $\tilde{T}(A, \pi_C\circ\alpha)=C$, so that $\tilde{T}$ is actually surjective on objects.

The identity $\tilde{T}J=IT$ holds trivially. We construct a natural isomorphism $\beta:TF\lra E\tilde{T}$, as follows: for $(A,p)\in|\DD|$ with $p:TA\lra C$ in $\PP$, we define $\beta_{(A,p)}: TF(A,p)=TA\lra EC=E\tilde{T}(A,p)$, written shortly as $\beta_p$, by the diagram
\begin{center}
$\xymatrix{TA\ar[r]^{\beta_p\df\widehat{1_C}}\ar[d]_{p} & EC\ar[d]^{\pi_{C}}\\
            C\ar[r]_{1_C} & C.}$
            \end{center}
Functoriality of $(\widehat{-})$ shows that, with $1_C$, also $\beta_p$ is an isomorphism. Since on morphisms also $E$ is defined via $(\widehat{-})$ (see Proposition \ref{functorial}), one exploits its functoriality again to see that the upper square of
\begin{center}
$\xymatrix{TA\ar[d]^{\beta_p}\ar@/_2pc/[dd]_{p} & {TA'}\ar[l]_{T\p}\ar[d]_{\beta_{p'}}\ar@/^2pc/[dd]^{p'}\\
            {EC}\ar[d]^{\pi_C} & {EC'}\ar[l]_{Ef}\ar[d]_{\pi_{C'}}\\
            C & {C'}\ar[l]_{f}}$
            \end{center}
commutes for every morphism $(\varphi,f):(A,p)\lra(A',p')$ in $\DD$, which shows the naturality of $\beta$.
\sqs
\end{nist}

\begin{nist}\label{what about S}
\rm
The virtue of any dual equivalence depends on one's ability to go back and forth efficiently between the dually equivalent categories. For that, it is important to not only ``lift" the functor $T$ to a dual equivalence $\tilde{T}$, but to do the same also for its partner $S$, as well as to the natural isomorphisms $\eta$ and $\varepsilon$ (as listed at the beginning of \ref{construction of D}).

The following theorem augments Proposition \ref{mere duality}, by laying out the functorial connections of the lifted dual equivalence with the given dual equivalence. To be able to do so to the fullest extent, we need to assume rigidity as defined in \ref{stronger conditions} and express the existence hypothesis of a {\em rigid} $(\BB,\CC)$-covering class functorially (see Proposition \ref{functorial} and Corollary \ref{stronger conditions}); any coreflective $\BB$ in $\CC$ admits such a class.

\medskip
\noindent{\bf Theorem} ({\em Extension Theorem}). {\em Let $\BB$ be a full subcategory of $\CC$ with inclusion functor $I:\BB\lra\CC$, such that

\medskip
\noindent {\em ($\ast$)} $I$ admits a retraction $E$ and a rigid natural transf. $\pi:IE\lra{\sf Id}_{\CC}$ with $\pi I=1_I$.
\medskip

\noindent Then, for every dual equivalence $T\!\!:\AA\longleftrightarrow\BB:\!\!S$, with natural isomorphisms
$\eta:{\rm Id}_{\BB}\lra T\circ S\ \mbox{ and }\ \ep:{\rm Id}_{\AA}\lra S\circ T$ satisfying the triangular identities, there are

\begin{itemize}
\item a category $\DD$ as described in {\em \ref{mere duality}}  (with $\PP=\PP_{\pi}$ of  {\em Proposition \ref{functorial}});
\item a full embedding $J:\AA\lra \DD$, such that
\end{itemize}

\noindent {\em ($\ast^{\rm op}$)} $J$ admits a retraction $F$ and a corigid natural tr. $\rho:{\sf Id}_{\DD}\lra JF$ with $\rho J=1_J$;

\begin{itemize}

\item a dual equivalence $\tilde{T}\!\!:\DD\longleftrightarrow\CC:\!\!\tilde{S}$, with natural isomorphisms $\tilde{\eta}:{\rm Id}_{\CC}\lra \tilde{T}\circ \tilde{S}\ \mbox{ and }\ \tilde{\ep}:{\rm Id}_{\DD}\lra \tilde{S}\circ \tilde{T}$ satisfying the triangular identities;
\item and natural isomorphisms $\beta:TF\lra E\tilde{T}$ and $\gamma:JS\lra\tilde{S}I.$
\end{itemize}
These data satisfy the following identities:
 \begin{itemize}
 \item[{\em (1)}] $\tilde{T}J=IT$ and $F\tilde{S}=SE$;
 \item[{\em (2)}] $\tilde{T}\tilde{S}={\sf Id}_{\CC},\,\tilde{\eta}=1_{{\sf Id}_{\CC}},$ and $\tilde{T}\tilde{\varepsilon}=1_{\tilde{T}},\,\tilde{\varepsilon}\tilde{S}=1_{\tilde{S}}$;
 \item[{\em(3)}] $\pi\tilde{T}\circ I\beta=\tilde{T}\rho$ and $\gamma E\circ\rho\tilde{S}=\tilde{S}\pi$;
 \item[{\em(4)}] $\tilde{T}\gamma=I\eta$ and $S\beta\circ F\tilde{\varepsilon}=\varepsilon F$.
 \end{itemize}
}
\medskip

\begin{proof}
For the definition of $\DD, J, F$ and $\beta$, we refer to Proposition \ref{construction of D}, which affirms all statements of the Theorem involving only these entities. Next, let us construct a natural transformation $\rho:{\sf Id}_{\DD}\lra JF$. For $(A,p:TA\lra C)\in|\DD|$ we define
$$\rho_{(A,p)}\df (\iota_p, p):(A,p)\lra(A,1_{TA})=JF(A,p).$$
with $\iota_p:A\to A$ in $\AA$ determined by the commutative diagram
\begin{center}
$\xymatrix{TA\ar[d]_{p} & TA\ar[l]_{\hat{p}=T\iota_p}\ar[d]^{1_{TA}}\\
            C & TA\ar[l]_{p}.}$
            \end{center}
For the naturality of $\rho$, consider a morphism $(\varphi, f):(A,p)\lra(A',p')$ in $\DD$ and form the commutative diagrams

\begin{center}
$\xymatrix{TA\ar[d]_{p} & TA\ar[l]_{\hat{p}=T\iota_p}\ar[d]^{1_{TA}} & TA'\ar[l]_{\widehat{T\varphi}=T\varphi}\ar[d]^{1_{TA'}}\\
            C & TA\ar[l]^{p} & TA'\ar[l]^{T\varphi}}$
            \hfil
$\xymatrix{TA\ar[d]_{p} & TA'\ar[l]_{\hat{f}=T\varphi}\ar[d]^{p'} & TA'\ar[l]_{\widehat{p'}=T\iota_{p'}}\ar[d]^{1_{TA'}}\\
            C & C'\ar[l]^{f} & TA'.\ar[l]^{p'}}$
 \end{center}
Since their bottom-row composites coincide, so do their top-row composites, and since $T$ is faithful, $\varphi\circ\iota_p=\iota_{p'}\circ\varphi$ follows. Hence,  $(\varphi, T\varphi)\circ(\iota_p, p)=(\iota_{p'},p')\circ(\varphi,f)$, which is precisely the required naturality condition $JF(\varphi,f)\circ\rho_{(A,p)}=\rho_{(A',p')}\circ(\varphi,f).$ Obviously, $\rho_{JA}=(1_A,1_{TA})=1_{JA}$
and $\tilde{T}\rho_{(A,p)}=p=\pi_C\circ\beta_p=\pi_{\tilde{T}(A,p)}\circ I\beta_{(A,p)}$ for all $(A,p:TA\to C)\in|\DD|$, so that $\rho J=1_J$ and $\tilde{T}\rho=\pi\tilde{T}\circ I\beta$ (see (3)).

We must also check that $\rho$ is corigid which, not surprisingly, requires us to use the rigidity of $\pi$ (for the first time in this proof). Indeed, the rigidity of $\pi$ makes every $p\in \PP_{\pi}$ rigid. Consequently, for every object $(A,p)$ in $\DD$ and every isomorphism $JF(A,p)\lra JF(A,p)$, which may be assumed to be of the form $J\alpha$ with an isomorphism $\alpha: A\lra A$ in $\AA$, such that $J\alpha\circ\rho_{(A,p)}=\rho_{(A,p)}$, we have $p\circ T\alpha=p$, so that $T\alpha=1_{TA}=T1_A$ and then $\alpha=1_A$ follows.

The adjoint $\tilde{S}:\CC\to\DD$ of $\tilde{T}$ is defined by
$$(C\lla C':f)\;\mapsto\;(\;\tilde{S}f\df(SEf,f):(SEC,\pi_C\circ\eta_{EC}^{-1})\lra(SEC',\pi_{C'}\circ\eta_{EC'}^{-1})\;).$$
That $\tilde{S}f$ is indeed a morphism in $\DD$ may be seen with the commutative diagram
\begin{center}
$\xymatrix{TSEC\ar[d]^{\eta_{EC}^{-1}}\ar@/_2pc/[dd]_{p} & {TSEC'}\ar[l]_{T(SEf)}\ar[d]_{\eta_{EC'}^{-1}}\ar@/^2pc/[dd]^{p'}\\
            {EC}\ar[d]^{\pi_C} & {EC'}\ar[l]_{Ef}\ar[d]_{\pi_{C'}}\\
            C & {C'}\ar[l]_{f}}$
            \end{center}
where $p\df\pi_C\circ\eta_{EC}^{-1}$,  $p'\df\pi_{C'}\circ\eta_{EC'}^{-1}$. Indeed, since $\beta_p=\eta_{EC}^{-1}$ by rigidity of $\pi_C$, one has

$$\hat{f}=\beta_p^{-1}\circ Ef\circ \beta_{p'}=\eta_{EC}\circ Ef\circ\eta_{EC'}^{-1}=TS(Ef)\circ \eta_{EC'}\circ\eta_{EC'}^{-1}=T(SEf).$$
The remaining identity $F\tilde{S}=SE$ of (1) holds trivially.

The natural transformation $\tilde{\eta}:{\sf Id}_{\CC}\lra\tilde{T}\tilde{S}$ is defined by

\medskip

$\tilde{\eta}_C\df 1_C: C\lra C=\tilde{T}(SEC,\pi_C\circ\eta_{EC}^{-1})=\tilde{T}\tilde{S}C$

\medskip
\noindent for all $C\in|\CC|$. We see that we actually have $\tilde{T}\tilde{S}={\sf Id}_{\CC}$ and $\tilde{\eta}=1_{{\sf Id}_{\CC}}.$ For defining the natural transformation $\tilde{\varepsilon}:{\sf Id}_{\DD}\lra\tilde{S}\tilde{T}$ we may put, for all $(A,p)\in|\DD|$ with $p:TA\to C$ in $\PP$,

\medskip

$\tilde{\varepsilon}_{(A,p)}\df (S\beta_p^{-1}\circ\varepsilon_A,1_C): (A,p)\lra(SEC,\pi_C\circ\eta_{EC}^{-1})=\tilde{S}C=\tilde{S}\tilde{T}(A,p).$

\medskip

\noindent Indeed, for $\p\df S\beta_p^{-1}\circ\varepsilon_A$, the diagram
\begin{center}
$\xymatrix{TA\ar[d]^{\beta_p}\ar@/_2pc/[dd]_{p} & {TSEC}\ar[l]_{T\p}\ar[d]_{\eta_{EC}^{-1}}\ar@/^2pc/[dd]^{\pi_C\circ \eta_{EC}^{-1}}\\
            {EC}\ar[d]^{\pi_C} & {EC}\ar[l]_{E1_C}\ar[d]_{\pi_{C}}\\
            C & {C}\ar[l]_{1_C}}$
            \end{center}
shows, with $\eta_{TA}^{-1}=T\varepsilon_A$, that one has
$$\widehat{1_C}=\beta_p^{-1}\circ E1_C\circ \eta_{EC}^{-1}=\eta_{TA}^{-1}\circ TS\beta_p^{-1}=T(S\beta_p^{-1}\circ \varepsilon_A)=T\p,$$
as desired. Clearly, $\tilde{\varepsilon}_{(A,p)}$ is an isomorphism, and $\tilde{T}\tilde{\varepsilon}_{(A,p)}=1_C$. Checking the naturality of $\tilde{\varepsilon}_{(A,p)}$ in $(A,p)$ is now a routine matter, which we may skip here. Consequently, $\tilde{T}\tilde{\varepsilon}=1_{\tilde{T}}$. For the remaining identity $\tilde{\varepsilon}\tilde{S}=1_{\tilde{S}}$ of (2), we compute, for all $C\in|\CC|$,
$$\tilde{\varepsilon}_{\tilde{S}C}=\tilde{\varepsilon}_{(SEC,\pi_C\circ\eta_{EC}^{-1})}=(S\beta_{\pi_C\circ\circ\eta_{EC}^{-1}}\circ\varepsilon_{SEC},1_C)=(S\eta_{EC}\circ\varepsilon_{SEC},1_C)=1_{\tilde{S}C}.$$
Note that, as a trivial consequence of the identities in (2), one has in particular the triangular identities
$$\tilde{T}\tilde{\varepsilon}\circ\tilde{\eta}\tilde{T}=1_{\tilde{T}}\quad {\rm and}\quad \tilde{S}\tilde{\eta}\circ\tilde{\varepsilon}\tilde{S}=1_{\tilde{S}}.$$

It remains to define a natural isomorphism $\gamma:JS\lra\tilde{S}I$: we put
$$\gamma_B\df(S\pi_B,\eta_B):JSB=(SB,1_{TSB})\lra(SEB,\pi_B\circ\eta_{EB}^{-1})=\tilde{S}B$$
for all $B\in|\BB|$.
By naturality of both, $\pi$ and $\eta$, the diagram
\begin{center}
$\xymatrix{TSB\ar[d]^{\pi_{TSB}^{-1}}\ar@/_2pc/[dd]_{1_{TSB}} & {TSEB}\ar[l]_{T(S\eta_B)}\ar[d]_{\eta_{EB}^{-1}}\ar@/^2pc/[dd]^{\pi_{B}\circ \eta_{EB}^{-1}}\\
            {ETSB}\ar[d]^{\pi_{TSB}} & {EB}\ar[l]_{E\eta_B}\ar[d]_{\pi_{B}}\\
            {TSB} & {B}\ar[l]_{\eta_B}}$
            \end{center}
commutes, so that $\gamma_B$ is indeed a morphism in $\DD$, and it is an isomorphism as both, $S\pi_B$ and $\eta_B$, are isomorphisms. The naturality check is straightforward. The verification of the remaining identities in (3) and (4) may be left to the reader.
\end{proof}
\end{nist}

\begin{nist}\label{semi}
\rm
We briefly return to Proposition \ref{functorial}, just to emphasize that the types of full embeddings characterized by it already appeared implicitly in early categorical work. Recall that an arbitrary functor $I:\BB\lra\CC$ is {\em left semi-adjoint} \cite{medvedev} if there are a functor $E:\CC\lra\BB$ and natural transformations $\pi:IE\lra{\sf Id}_{\CC}$ and $\sigma:{\sf Id}_{\BB}\lra EI$ satisfying the {\em triangular identity}
$$\pi I\circ I\sigma=1_I;$$
we call $I$ {\em fully left semi-adjoint}\/ if, in addition, $\sigma$ may be chosen to be an isomorphism.

Although they are missing the second triangular identity satisfied by a left-adjoint functor $I$ ({\em i.e.}, $E\pi\circ \sigma E=1_E$ will generally {\em not} hold), left semi-adjoints still enjoy the most important property of left-adjoint functors: they preserve all existing colimits. Indeed, the standard proof for left-adjoint functors still works for left semi-adjoint functors; see \cite{borger-tholen}.

Fully semi left-adjoint functors may be characterized as in Proposition \ref{functorial}:

\medskip

\noindent{\bf Lemma.}\label{lemma} {\em A functor $I:\BB\lra\CC$ is fully left semi-adjoint if, and only if, $I$ is full and faithful, and there are a functor $E:\CC\to\BB$ and a natural transformation $\pi:IE\lra{\sf Id}_{\CC}$ with $\pi I$ an isomorphism. Furthermore, $I$ is then left adjoint precisely when $IE\pi=\pi IE$.}

\medskip
\doc
For $I$ full and faithful, having $E$ and $\pi$ with $\pi I$ being an isomorphism, we obtain a (uniquely determined) natural transformation $\sigma: {\sf Id}_{\BB}\lra EI$ with $I\sigma=(\pi I)^{-1}$, which implies $\pi I\circ I\sigma=1_I$; furthermore, $\sigma$ is an isomorphism since $I\sigma$ is one. 

Conversely, having $E, \pi, \sigma$ as in the definition of full left semi-adjointness, the equation $\pi I\circ I\sigma=1_I$ shows that, with $\sigma$, also $\pi I$ must be an isomorphism. That $I$ is necessarily fully faithful in this situation may be shown as for left adjoint functors with isomorphic adjunction unit: given $f:IA\to IB$ in $\CC$ with $A,B\in|\BB|$, $g\df\sigma_B^{-1}\circ Ef\circ \sigma_A$ is the only $\BB$-morphism $A\to B$ with $Ig=f$.

When $I$ is left adjoint, one has the second triangular identity $E\pi\circ\sigma E=1_E$ and, hence, $1_{IE}=IE\pi\circ I\sigma E=IE\pi\circ(\pi IE)^{-1}$. Conversely, from $IE\pi=\pi IE$ one obtains $I(E\pi\circ \sigma E)=\pi IE\circ I\sigma E=1_IE=I(1_E)$ and, hence, $E\pi\circ \sigma E=1_E$.
\sqs

\noindent Note that, for a fully left semi-adjoint functor $I$, the functor $E$ and the transformation $\pi$  used to define the term will generally {\em not} be unique (up to isomorphism): having $\tilde{E}, \tilde{\pi}$ with corresponding properties gives us natural transformations $\gamma:E\lra \tilde{E}$ and $\delta:\tilde{E}\lra E$ with $\tilde{\pi}\circ I\gamma=\pi,\;\pi\circ I\delta=\tilde{\pi}$, but they will generally be inverse to each other only when $I$ is left adjoint.

We may now rephrase Proposition \ref{functorial}, as follows:

\medskip

\noindent{\bf Corollary.} {\em For a full subcategory $\BB$ of $\CC$, there is a $(\BB,\CC)$-covering class if, and only if, the inclusion functor is fully  left semi-adjoint.}
\end{nist}

\section{The de Vries and Fedorchuk  dualities revisited}
In this section we recall and extend various facts leading up to the Fedorchuk Duality Theorem \cite{F} and sketch how originally it has been derived from the de Vries Duality Theorem \cite{deV}. Our alternative proof of the Fedorchuk Duality Theorem, which avoids the use of the de Vries Duality Theorem, follows in the next section.

\begin{nist}\label{skelnewcor}
\rm
We begin by recalling some statements about skeletal and quasi-open mappings.
It is well known that {\em a mapping  $f:X\lra Y$  of topological spaces is skeletal if,
and only if,
$$\int(\cl (f(U)))\nes$$
for every non-empty open subset  $U$  of} $X$ (see, for example, \cite{D2009}).
Hence, {\em every quasi-open mapping is skeletal.}
Also, {\em  if $f:X\lra Y$ is continuous, then
 $f$ is a skeletal mapping if, and only if,
for every} $F\in \RC(X)$, $\cl(f(F))\in \RC(Y)$ (see, e.g., \cite{D2009}).

\smallskip

We also recall the following result of Blaszczyk \cite{Bl1}:

\medskip

\noindent{\bf Lemma.}
{\em A  continuous mapping $f:X\lra Y$ of topological spaces, is skeletal if, and only if,
for every open dense subset $V$ of\/ $Y$, $\cl_X(f\inv(V))= X$.}

\medskip

Note that {\em every closed irreducible mapping $f:X\lra Y$ is quasi-open}; indeed, by a result of Ponomarev \cite{Pon1},
{\em for such mappings one has that, for every non-empty open subset $U$ of $X$,
\begin{equation}\label{Ponomarev}
f^{\rm \#}(U)\ \df\{y\in Y\st f\inv(y)\sbe U\}
\end{equation}
is a non-empty open subset of} $Y$.
\end{nist}

\begin{lm}\label{qopl}
Let $f:X\lra Y$ and $g:Y\lra Z$ be continuous maps of topological spaces, with  $g\circ f$ and $f$ quasi-open and $\cl(f(X))=Y$.
Then  $g$ is also quasi-open.
\end{lm}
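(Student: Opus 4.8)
The plan is to show directly that $\int_Z(g(W)) \neq \emptyset$ for an arbitrary non-empty open subset $W$ of $Y$. Since $\cl(f(X)) = Y$ and $W$ is open and non-empty, we have $W \cap f(X) \neq \emptyset$, so $f\inv(W)$ is a non-empty open subset of $X$. The key observation will be that $g(W) \supseteq (g\circ f)(f\inv(W))$, because every point of $f\inv(W)$ maps under $f$ into $W$, hence its image under $g\circ f$ lies in $g(W)$.

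With that containment in hand, the argument is immediate: since $g\circ f$ is quasi-open and $f\inv(W)$ is a non-empty open subset of $X$, the set $\int_Z((g\circ f)(f\inv(W)))$ is non-empty; and since $(g\circ f)(f\inv(W)) \subseteq g(W)$, monotonicity of the interior operator gives $\emptyset \neq \int_Z((g\circ f)(f\inv(W))) \subseteq \int_Z(g(W))$. As $W$ was an arbitrary non-empty open subset of $Y$, this proves that $g$ is quasi-open.

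First I would state the reduction to open sets of the form $f\inv(W)$ and verify the inclusion $(g\circ f)(f\inv(W)) \subseteq g(W)$; then I would invoke quasi-openness of $g\circ f$; and finally I would take interiors. I do not expect a genuine obstacle here — the only point requiring a moment of care is checking that $f\inv(W) \neq \emptyset$, which is exactly where the density hypothesis $\cl(f(X)) = Y$ is used (without it, $W$ could miss the image of $f$ entirely and the conclusion would fail). It is worth noting that this argument does not actually use quasi-openness of $f$ itself, only of $g\circ f$ together with the density condition; the hypothesis on $f$ is presumably included because it is the natural setting (e.g. $f$ a projective cover or irreducible surjection) in which the lemma is later applied.
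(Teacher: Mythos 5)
Your proof is correct and is essentially identical to the paper's argument: both take an arbitrary non-empty open $V\sbe Y$, use density of $f(X)$ to get the non-empty open set $f\inv(V)$, apply quasi-openness of $g\circ f$, and use $f(f\inv(V))\sbe V$ to conclude $\int(g(V))\nes$. Your side remark that quasi-openness of $f$ itself is not needed is also accurate; the paper's proof does not use it either.
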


\doc Let $V$ be a non-empty open subset of $Y$. Then $V\cap f(X)\nes$ and thus $U\df f\inv(V)$ is a non-empty open subset of $X$. Hence $W\df \int((g\circ f)(U))$ is a non-empty open subset of $Z$.
Since $f(U)=f(f\inv(V))\sbe V$, we obtain that $g(f(U))\sbe g(V)$. Hence $W\sbe \int(g(V))$. Therefore, $\int(g(V))\nes$. This shows that the mapping $g$ is quasi-open.
\sqs

\begin{nist}\label{begen}
\rm

 It is well known that, {\em for compact Hausdorff spaces $X$ and $Y$, if $EX$ and $EY$ denote their absolutes
 and $\pi_X:EX\lra X$, $\pi_Y:EY\lra Y$ their projective covers, respectively, then, for every continuous mapping
 $f:X\lra Y$, there exists a continuous mapping $\baf:EX\lra EY$ such that} $f\circ \pi_X=\pi_Y\circ\baf$; furthermore,
  {\em the mapping $f$ is surjective if, and only if, the mapping $\baf$ is surjective} (see, e.g., \cite{Gle}, \cite[10M]{Wa}
   and \cite{HJ}). Indeed, the first assertion follows from the fact
 that $\pi_Y$ is a perfect surjective mapping and $EX$ is a projective object of the category $\HC$
 (see the Gleason Theorem \ref{nipr}); the second one is an easy consequence of the irreducibility of the
 mapping $\pi_Y$. Further, Bereznitskij (as cited in \cite{PS}) proved that {\em if $f$ is a continuous surjection,
 then,  the mapping $f$ is quasi-open \tiff the mapping $\baf$ is open.}
 We now prove that this result is true even without the assumption that $f$ be surjective.

\medskip

\noindent {\bf Proposition.} {\em The mapping $f$ is quasi-open \tiff the mapping $\baf$ is open.}

\medskip

\doc ($\Rightarrow$) Since $f$ and $\pi_X$ are quasi-open mappings, the composite map $f\circ\pi_X$ is also
quasi-open; this means: $\pi_Y\circ\baf$ is
quasi-open. Using Blaszczyk's Lemma \ref{skelnewcor},
we now show  that the mapping $\baf$ is skeletal.
So, let $U$ be an open dense subset of $EY$. We have to show that
$\baf\inv(U)$ is a dense subset of $EX$.
By (\ref{Ponomarev}), $V\df (\pi_Y)^\#(U)$,
is a non-empty open subset of $Y$,
and $(\pi_Y)\inv(V)\sbe U$. Using a result of Ponomarev
\cite{Pon1} (see  \cite[Proposition 2, page 345]{Alex}), we
obtain $\pi_Y(\cl(U))=\cl(V)$, {\em i.e.}, $V$ is an open dense
subset of $Y$. Since the mapping $\pi_Y\circ\baf$ is quasi-open,
Blaszczyk's Lemma \ref{skelnewcor} implies that the set
$(\pi_Y\circ\baf)\inv(V)$ is dense in $EX$. With
$(\pi_Y)\inv(V)\sbe U$ we now obtain
$EX=\cl(\baf\inv((\pi_Y)\inv(V)))\sbe \cl(\baf\inv(U))\sbe EX$.
Thus, $\baf\inv(U)$ is a dense subset of $EX$. So, $\baf$ is a
skeletal mapping, and we obtain that the mapping
$\baf$, being closed, is quasi-open (see, e.g., \cite{D2009}). Consequently,  if $F\in \RC(EX)$, then $\baf(F)\in\RC(EY)$. The
spaces $EX$ and $EY$ are extremally disconnected and, thus,
$\RC(EX)=\CO(EX)$ and $\RC(EY)=\CO(EY)$. Since $\CO(EX)$ is a base
for $EX$, we obtain that $\baf$ is an open mapping.

\medskip

\noindent ($\Leftarrow$) Since $\baf$ and $\pi_Y$ are quasi-open mappings, the composite map $\pi_Y\circ\baf=f\circ\pi_X$ is also quasi-open. Since $\pi_X$ is surjective, we conclude  with Lemma \ref{qopl} that $f$ is quasi-open.
\sqs
\end{nist}

We also recall the following theorem of Henriksen and Jerison \cite{HJ}:

\begin{theorem}\label{HJTH}
Let $X$ and $Y$ be compact Hausdorff spaces, $\pi_X:EX\lra X$ and $\pi_Y:EY\lra Y$ be their projective covers,
and $f:X\lra Y$ be a continuous surjection. There exists a unique continuous mapping $\baf:EX\lra EY$ satisfying
$f\circ \pi_X=\pi_Y\circ \baf$ \tiff
%
\begin{equation}\label{skehj}
\ \cl(\int(f\inv(F)))=\cl(f\inv(\int(F))) \mbox{ for every } F\in
\RC(Y).
\end{equation}
\end{theorem}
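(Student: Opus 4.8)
The plan is to prove the equivalence in two directions, exploiting the fact that $\pi_X$ and $\pi_Y$ are perfect irreducible maps and that $EX$, $EY$ are extremally disconnected. For the direction asserting that condition (\ref{skehj}) is necessary: assuming $\baf$ exists with $f\circ\pi_X=\pi_Y\circ\baf$, one observes that since $\pi_X$ is surjective and $\pi_Y\circ\baf=f\circ\pi_X$ is a composite of the quasi-open-when-relevant maps, the combinatorics of regular closed sets pull back cleanly; more directly, condition (\ref{skehj}) is exactly the statement that $f$ is skeletal (recall from \ref{skelnewcor} that a continuous $f$ is skeletal iff $\cl(f(F))\in\RC(Y)$ for every $F\in\RC(X)$, and dually iff $\cl(\int(f\inv(F)))=\cl(f\inv(\int F))$ for every $F\in\RC(Y)$). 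So for this direction I would show that the existence of the lift $\baf$ forces $f$ to be skeletal. The key step is that $\baf$, as a continuous map between extremally disconnected compact Hausdorff spaces, induces a Boolean homomorphism $\RC(EY)=\CO(EY)\to\CO(EX)=\RC(EX)$ via $F\mapsto\cl(\baf\inv(\int F))$ or, when $\baf$ is open, simply $F\mapsto\baf\inv(F)$; composing with the canonical identifications $\RC(X)\cong\RC(EX)$, $\RC(Y)\cong\RC(EY)$ (these are isomorphisms because $\pi_X,\pi_Y$ are perfect irreducible) yields that $F\mapsto\cl(f\inv(\int F))$ preserves the Boolean operations, and in particular commutes with interior/closure in the way (\ref{skehj}) demands.

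For the converse, assuming (\ref{skehj}), hence that $f$ is skeletal, I would construct $\baf$ explicitly using the Stone-duality description of the absolutes from \ref{niab}(c): $EX=S^a(\RC(X))=\ult(\RC(X))$ with $\pi_X(u)=\bigcap u$, and similarly for $Y$. Skeletality of $f$ means precisely that $F\mapsto\cl_X(f\inv(\int_Y F))$ is a Boolean homomorphism $h:\RC(Y)\to\RC(X)$ (this is where (\ref{skehj}) is used to check it respects complements), so $S^a(h):\ult(\RC(X))\to\ult(\RC(Y))$ is a continuous map, and I would take $\baf\df S^a(h)$. Then I must verify $f\circ\pi_X=\pi_Y\circ\baf$, which amounts to checking, for an ultrafilter $u$ on $\RC(X)$, that $f(\bigcap u)=\bigcap h\inv(u)$; this is a point-set computation using that $f$ is surjective (so fibres are non-empty) and that ultrafilters on $\RC$ correspond to points of the absolute. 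Uniqueness of $\baf$ follows from irreducibility of $\pi_Y$ (equivalently, density arguments): two lifts agreeing after composition with the irreducible surjection $\pi_Y$ must agree on a dense set, hence everywhere by Hausdorffness.

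The main obstacle I anticipate is the careful verification that skeletality of $f$ is equivalent to $h(F)\df\cl_X(f\inv(\int_Y F))$ being a genuine Boolean homomorphism on $\RC(Y)$ — in particular that $h(F^*)=h(F)^*$, which is exactly the content of (\ref{skehj}) rewritten, but getting the interior/closure bookkeeping right (and handling the infinite joins correctly, since we are in complete Boolean algebras) requires some care. The surjectivity hypothesis on $f$ enters in two places: to guarantee $h$ is injective/the fibres of $\pi_Y\circ\baf$ match those of $f\circ\pi_X$, and to invoke Lemma \ref{qopl}-type arguments; without it the correspondence between ultrafilters and points of the absolute on the target side breaks down. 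Everything else — continuity of $S^a(h)$, the triangular identity $f\circ\pi_X=\pi_Y\circ\baf$, uniqueness — is then essentially formal given the Stone-duality machinery already set up in \ref{Std} and \ref{niab}.
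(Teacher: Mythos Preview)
First, note that the paper does not give its own proof of this theorem: it is stated as a result of Henriksen and Jerison \cite{HJ} and only commented on in Remark~\ref{remhj}. So there is no ``paper's proof'' to compare against; I can only assess your argument on its merits.

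There is a genuine gap. You treat the theorem as an existence statement, but existence of \emph{some} lift $\baf:EX\to EY$ with $f\circ\pi_X=\pi_Y\circ\baf$ is automatic for any continuous $f$, since $EX$ is projective in $\HC$ (see \ref{begen} and the Gleason Theorem \ref{nipr}). The content of the theorem is entirely about \emph{uniqueness}. Your uniqueness argument---``two lifts agreeing after composition with the irreducible surjection $\pi_Y$ must agree on a dense set, hence everywhere''---is incorrect. From $\pi_Y\circ\hat f_1=\pi_Y\circ\hat f_2$ one only gets that $\hat f_1(x)$ and $\hat f_2(x)$ lie in the same fibre of $\pi_Y$, not that they coincide; irreducible surjections are far from being monomorphisms. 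If this argument worked, uniqueness would hold for \emph{every} continuous surjection $f$, contradicting the theorem.

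This error propagates to the other direction as well. In your ``necessity'' argument you assume only that a lift exists and try to deduce (\ref{skehj}); since a lift always exists, this would prove (\ref{skehj}) for every continuous surjection, which is false. What you must use is the hypothesis of \emph{uniqueness}, and you never do. Separately, your identification of (\ref{skehj}) with skeletality is asserted but not justified; the paper only records the implication skeletal $\Rightarrow$ (\ref{skehj}) (Remark~\ref{remhj}), and you would need to argue the converse (for closed maps between compact Hausdorff spaces) if you want to use it. The Stone-duality construction of a particular $\baf$ is fine as far as it goes, but it does not touch the real issue: you need a mechanism by which (\ref{skehj}) forces any two lifts to coincide, and conversely an explicit construction of two distinct lifts when (\ref{skehj}) fails.
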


\begin{rem}\label{remhj}
\rm
We note that, in Lemmas 1 and 3 of \cite{HJ}, the expression $``(\pi_X)\inv(\a)"$ should be replaced by $``\cl((\pi_X)\inv(\int(\a)))$''.
Indeed, supposing that $(\pi_X)\inv(\a)$ is open for every $\a\in \RC(X)$, we obtain, by the result of Ponomarev \cite{Pon1} cited above (see (\ref{Ponomarev})), that $(\pi_X)^{\rm \#}((\pi_X)\inv(\a))$ is open, {\em i.e.}, that
$\a$ is open for every $\a\in \RC(X)$, which is true only when $X$ is extremally disconnected. Fortunately, all other statements of \cite{HJ} remain true, although their proofs have to be slightly adjusted.

\medskip

Clearly, {\em every continuous skeletal mapping $f:X\lra Y$ of topological spaces $X$ and $Y$ satisfies}
(\ref{skehj}) (\cite{MR}). Consequently,
{\em every
quasi-open mapping $f:X\lra Y$ satisfies} (\ref{skehj}) (\cite{HJ, PS}).
\end{rem}

Of great importance to our investigations  is the following beautiful theorem by Alexandroff \cite{Alex}, which follows easily from Ponomarev's results \cite{Pon1} on irreducible mappings:

  \begin{theorem}\label{thalpon}{\rm (\cite[Corollary, p. 346]{Alex})}
  Let $p:X\lra Y$ be a closed irreducible mapping. Then the map
  $$\p_p:\RC(X)\lra \RC(Y),\ \ H\mapsto p(H).$$
  is a Boolean isomorphism, and one has $\p_p\inv(K)=\cl_X(p\inv(\int_Y(K)))$, for all $K\in\RC(Y)$.
  \end{theorem}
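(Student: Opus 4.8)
The plan is to prove Theorem~\ref{thalpon} by a direct analysis of the behaviour of a closed irreducible map on regular closed sets, deriving everything from the basic point-set lemmas about such maps, in particular from the properties of the operator $(-)^{\#}$ introduced in~(\ref{Ponomarev}) (which Ponomarev showed to be open-valued for closed irreducible maps).

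First I would record the two elementary identities I expect to use repeatedly. For a closed map $p:X\to Y$ and any subset $M\subseteq X$ one has $p(\cl_X(M))=\cl_Y(p(M))$; in particular $p$ sends regular closed sets to closed sets, and, combined with irreducibility and with the fact that $p^{\#}(U)$ is a non-empty open subset of $Y$ contained in $p(U)$ for every non-empty open $U$, one gets the crucial relation $\cl_Y(p(U)) = \cl_Y(p^{\#}(U))$ for open $U$, and $p\inv(p^{\#}(U))\subseteq U$. The second ingredient is that $p$, being closed irreducible, is in particular skeletal and quasi-open, so by the facts recalled in~\ref{skelnewcor} we have $\cl_Y(p(F))\in\RC(Y)$ for every $F\in\RC(X)$; hence the map $\p_p(H)\df p(H)=\cl_Y(p(\Int_X H))$ does land in $\RC(Y)$ and the formula in the statement is at least well-typed. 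The candidate inverse $\psi(K)\df\cl_X(p\inv(\Int_Y K))$ also lands in $\RC(X)$: indeed $p\inv(\Int_Y K)$ is open, so its closure is regular closed provided $p\inv$ of an open set has the property that the closure of its interior recaptures the closure — which follows since $p\inv(\Int_Y K)$ is itself open.

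Next I would verify $\psi\circ\p_p=\id$ and $\p_p\circ\psi=\id$. For $H\in\RC(X)$ put $U=\Int_X H$; then $\p_p(H)=\cl_Y(p(U))=\cl_Y(p^{\#}(U))$, so $\Int_Y(\p_p(H))\supseteq p^{\#}(U)$, giving $p\inv(\Int_Y\p_p(H))\supseteq p\inv(p^{\#}(U))$, whose closure I must show equals $H$; the inclusion $\psi(\p_p(H))\subseteq H$ comes from $p(U)\subseteq \p_p(H)$ hence $U\subseteq p\inv(\p_p(H))$ and a density argument using that $p$ is closed with dense-preimage behaviour (Blaszczyk's Lemma~\ref{skelnewcor}), while $\psi(\p_p(H))\supseteq\cl_X(p\inv(p^{\#}(U)))=\cl_X(U)=H$ uses that $p\inv(p^{\#}(U))$ is dense in $U$ — this is exactly where irreducibility bites: an open set meeting $U$ but missing $p\inv(p^{\#}(U))$ would have image contained in $Y\setminus p^{\#}(U)$, forcing a proper closed set to be carried onto $Y$. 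The other composite is dual: for $K\in\RC(Y)$, $\p_p(\psi(K))=\cl_Y(p(p\inv(\Int_Y K)))=\cl_Y(\Int_Y K)=K$ since $p$ is surjective, so $p(p\inv(\Int_Y K))=\Int_Y K$. Then I would check that $\p_p$ preserves the Boolean operations: it preserves finite joins since $p$ preserves unions, it preserves $\le$ obviously, and a bijective order-isomorphism between Boolean algebras is automatically a Boolean isomorphism; alternatively one checks $\p_p(H^*)=(\p_p H)^*$ directly from the inverse formula. This establishes both that $\p_p$ is a Boolean isomorphism and the stated formula $\p_p\inv(K)=\cl_X(p\inv(\Int_Y K))$.

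The main obstacle I anticipate is the density claim ``$p\inv(p^{\#}(U))$ is dense in $U$'' (equivalently $\cl_X(p\inv(\Int_Y\,\cl_Y(p(U))))=\cl_X(U)$), which is the heart of the matter and the one place where the full force of closed-plus-irreducible is needed rather than mere skeletality. I would handle it cleanly by the standard Ponomarev argument: if $W$ is a non-empty open subset of $U$ with $W\cap p\inv(p^{\#}(U))=\emptyset$, then $p^{\#}(U)\cap p(W)=\emptyset$, yet $p(W)$ is a set with non-empty interior (quasi-openness) contained in $\cl_Y(p(U))=\cl_Y(p^{\#}(U))$, so $\Int_Y(p(W))\subseteq \cl_Y(p^{\#}(U))$ meets $p^{\#}(U)$ — contradiction. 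If a self-contained treatment is preferred, I would instead simply cite Ponomarev~\cite{Pon1} and Alexandroff~\cite{Alex} for this step, since the theorem is attributed to them; but the proof above is short enough to include in full.
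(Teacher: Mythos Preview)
The paper does not actually prove Theorem~\ref{thalpon}; it is quoted as Alexandroff's corollary (based on Ponomarev's results~\cite{Pon1}) and used as a black box. So there is no ``paper's own proof'' to compare against, and your write-up would in fact supply an argument the paper omits.

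Your overall strategy is correct and standard: use Ponomarev's identity $p(\cl_X U)=\cl_Y(p^{\#}(U))$ for open $U$ (which you re-derive cleanly for the density step), define $\psi(K)=\cl_X(p^{-1}(\Int_Y K))$, check the two composites, and observe that an order-isomorphism of Boolean algebras is a Boolean isomorphism. The computation $\p_p(\psi(K))=\cl_Y(p(p^{-1}(\Int_Y K)))=\cl_Y(\Int_Y K)=K$ is fine, and your density argument for $\psi(\p_p(H))\supseteq H$ via $p^{-1}(p^{\#}(U))$ being dense in $U$ is the right idea and correctly executed.

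There is, however, a genuine gap in your justification of the inclusion $\psi(\p_p(H))\subseteq H$. What you wrote---``$p(U)\subseteq\p_p(H)$ hence $U\subseteq p^{-1}(\p_p(H))$ and a density argument using Blaszczyk's Lemma''---points in the wrong direction: the inclusion $U\subseteq p^{-1}(\p_p(H))$ would, if anything, support $\psi(\p_p(H))\supseteq H$, and Blaszczyk's Lemma (about preimages of \emph{dense} open sets filling up $X$) is not what is needed here. The clean argument is another application of Ponomarev's identity, this time to the complementary open set $X\setminus H=\Int_X(H^*)$: since $Y\setminus p(H)=p^{\#}(X\setminus H)$, one has
\[
\Int_Y(p(H))=Y\setminus\cl_Y\bigl(p^{\#}(X\setminus H)\bigr)=Y\setminus p(H^*),
\]
whence $p^{-1}(\Int_Y(p(H)))=X\setminus p^{-1}(p(H^*))\subseteq X\setminus H^*=\Int_X H\subseteq H$, and taking closures gives $\psi(\p_p(H))\subseteq H$. (Equivalently, this shows directly that $\p_p(H^*)=\p_p(H)^*$, from which injectivity and the inverse formula follow at once.) Replace your Blaszczyk reference with this short computation and the proof is complete.
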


We also need the next  assertion, which is similar to that of Theorem \ref{HJTH}; it follows from a much more general theorem of  \v{S}apiro \cite{Sa} (see also Uljanov \cite{Ul}). For the sake of completeness of our exposition, we outline a proof.

\begin{pro}\label{HJTHM}
Let $X$ and $Y$ be compact Hausdorff spaces
and $f:X\lra Y$ a quasi-open mapping. Then, with $\pi_X:EX\lra X$ and $\pi_Y:EY\lra Y$ the projective covers of $X$ and $Y$, there exists a unique continuous mapping $\baf:EX\lra EY$ such that
$f\circ \pi_X=\pi_Y\circ \baf$.
\end{pro}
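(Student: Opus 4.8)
The plan is to obtain the existence of $\bar f$ from facts already recalled and to establish its uniqueness by a direct topological argument that plays the quasi-openness of $f$ against the irreducibility of the projective covers.

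\emph{Existence} is already contained in the discussion of \ref{begen}: the map $g\df f\circ\pi_X:EX\lra Y$ is continuous, $\pi_Y:EY\lra Y$ is a perfect surjection, and $EX$ — being extremally disconnected — is a projective object of $\HC$ by Gleason's Theorem \ref{nipr}; hence $g$ lifts along $\pi_Y$ to a continuous map $\bar f:EX\lra EY$ with $f\circ\pi_X=\pi_Y\circ\bar f$. Quasi-openness is not needed for this.

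For \emph{uniqueness}, suppose $\bar f_1,\bar f_2:EX\lra EY$ both satisfy $f\circ\pi_X=\pi_Y\circ\bar f_i$, and assume, towards a contradiction, that $\bar f_1(e_0)\neq\bar f_2(e_0)$ for some $e_0\in EX$. Since $EY$ is extremally disconnected and Hausdorff, $\RC(EY)=\CO(EY)$ and one may choose a clopen $P\sbe EY$ with $\bar f_1(e_0)\in P$ and $\bar f_2(e_0)\in EY\stm P$ (take $P=\cl_{EY}(O)$ for a sufficiently small open $O\ni\bar f_1(e_0)$). Then $W\df\bar f_1\inv(P)\cap\bar f_2\inv(EY\stm P)$ is a non-empty open subset of $EX$. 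As $\pi_X$ is closed and irreducible, the set $V_0\df(\pi_X)^{\#}(W)=\{x\in X\st(\pi_X)\inv(x)\sbe W\}$ is a non-empty open subset of $X$ satisfying $(\pi_X)\inv(V_0)\sbe W$ (see \ref{skelnewcor} and (\ref{Ponomarev})). On $(\pi_X)\inv(V_0)$ the map $\bar f_1$ takes values in $P$ and $\bar f_2$ takes values in $EY\stm P$; applying $\pi_Y$ and using $\pi_Y\circ\bar f_i=f\circ\pi_X$ together with the surjectivity of $\pi_X$, we obtain
\[
f(V_0)\ \sbe\ \pi_Y(P)\qquad\text{and}\qquad f(V_0)\ \sbe\ \pi_Y(EY\stm P).
\]
By Alexandroff's Theorem \ref{thalpon} applied to the closed irreducible map $\pi_Y$, the assignment $H\mapsto\pi_Y(H)$ is a Boolean isomorphism $\RC(EY)\lra\RC(Y)$; hence, writing $Q\df\pi_Y(P)\in\RC(Y)$, we get $\pi_Y(EY\stm P)=Q^*=\cl_Y(Y\stm Q)$, and therefore $f(V_0)\sbe Q\cap\cl_Y(Y\stm Q)=Q\stm\int_Y(Q)$. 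Since $Q$ is closed, this last set has empty interior in $Y$, so $\int_Y(f(V_0))=\ems$ — which contradicts the quasi-openness of $f$ applied to the non-empty open set $V_0$. Hence $\bar f_1=\bar f_2$.

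The substantive content lies in the uniqueness part, and its crux is the observation that two distinct lifts would force $f$ to carry a non-empty open set into the nowhere dense boundary $Q\stm\int_Y(Q)$ of the regular closed set $Q=\pi_Y(P)$, which a quasi-open map cannot do. I expect the only slightly delicate bookkeeping to be the transition from $P$ to its Boolean complement via Theorem \ref{thalpon} and the elementary fact that the boundary of a closed set has empty interior; the rest is routine. (We note in passing that, when $f$ is moreover surjective, both existence and uniqueness can also be read off from the Henriksen--Jerison Theorem \ref{HJTH} combined with Remark \ref{remhj}.)
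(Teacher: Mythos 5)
Your proof is correct, and for the uniqueness part it takes a genuinely different route from the paper. The paper reduces to the surjective case: it first uses Proposition \ref{begen} to see that any lift $g$ with $f\circ\pi_X=\pi_Y\circ g$ is open, identifies its image as the clopen set $W=\cl_{EY}((\pi_Y)^{-1}(\int_Y(f(X))))$, shows that $\pi_Y\!\upharpoonright_W$ is the projective cover of $Z=f(X)\in\RC(Y)$, and then quotes the uniqueness direction of the Henriksen--Jerison Theorem \ref{HJTH} for the surjection $f:X\lra Z$. You instead argue directly: if two lifts disagreed at a point, a clopen $P\sbe EY$ separating the two values, together with Ponomarev's openness of $(\pi_X)^{\#}$ for the closed irreducible map $\pi_X$, would produce a non-empty open $V_0\sbe X$ with $f(V_0)\sbe \pi_Y(P)\cap\pi_Y(EY\stm P)$, and by Theorem \ref{thalpon} this intersection is the boundary $Q\stm\int_Y(Q)$ of the regular closed set $Q=\pi_Y(P)$, which has empty interior --- contradicting quasi-openness of $f$. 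All the individual steps check out (the clopen separation is available since $EY$ is extremally disconnected Hausdorff, the inclusion $(\pi_X)^{-1}(V_0)\sbe W$ is immediate from the definition of $^{\#}$, and $P^*=EY\stm P$ in $\CO(EY)=\RC(EY)$ is indeed carried to $\cl_Y(Y\stm Q)$ by the Boolean isomorphism of Theorem \ref{thalpon}). Your argument is more self-contained: it bypasses Theorem \ref{HJTH} (whose source needs the adjustments noted in Remark \ref{remhj}) and does not need the fact that lifts of quasi-open maps are open; the paper's proof, in exchange, yields the extra structural information that the image of any lift is precisely the absolute of $f(X)$ sitting as a clopen subspace of $EY$, and it recycles machinery already in place.
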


\doc  The existence of such a mapping $\baf:EX\lra EY$ was already established in \ref{begen}.
 Suppose that $g:EX\lra EY$ is any continuous mapping with $f\circ \pi_X=\pi_Y\circ g$.
 Then, by Proposition \ref{begen}, $g$ is open, whence $g(EX)\in\CO(EY)$.
 Since $f$ is quasi-open, we have $Z\df f(X)\in\RC(Y)$.   Clearly, $W\df\cl_{EY}((\pi_Y)\inv(\int_Y(Z))) \in\CO(EY)$ and, hence, $W$ is an extremally disconnected compact Hausdorff space.
 Since $\pi_Y$ is a closed surjection, we obtain that $\pi_Y(W)=Z$ and put $\pi_Z\df\pi_Y\!\!\upharpoonright_W:W\lra Z$.
 We claim that $\pi_Z$ is irreducible. Indeed, suppose that there exists a closed proper subset $F$ of $W$ such
 that $\pi_Z(F)=Z$. Then $F$ and $F\cup (EY\stm W)$ are proper closed subsets of $EY$, and $\pi_Y(F\cup (EY\stm W))=Y$.
 Since $\pi_Y$ is irreducible, we obtain a contradiction. Hence, $W$ is the absolute of $Z$, and $\pi_Z$ is the
 projective cover of $Z$. Let $\a:\RC(EY)\lra \RC(Y)$ be defined by the formula $\a(G)\df\pi_Y(G)$,
 for every $G\in\RC(EY)(=\CO(EY))$. Then, by Theorem \ref{thalpon}, $\a$ is a Boolean isomorphism.
Clearly, $\a(W)=Z$. Since
$\a(g(EX))=\pi_Y(g(EX))=f(\pi_X(EX))=f(X)=Z$, we obtain
 $g(EX)=W$. Now, applying Theorem \ref{HJTH} for $\pi_X$, $\pi_Z$ and $f\upharpoonright_X:X\lra Z$ (and noting that
 $f\upharpoonright_X$ is quasi-open),
 we conclude $g=\baf$.
 \sqs

\begin{nist}\label{qzhcth}
\rm
The following theorem was proved in \cite[Corollary 3.2(c)]{D-PMD12} and \cite[Corollary 2.4(c)]{D-a0903-2593}:

\medskip

\noindent{\bf Theorem.}{\rm (\cite{D-PMD12})}
{\em The restrictions of the functors $S^t$ and $S^{a}$ of the Stone Duality Theorem {\em(see \ref{Std})} render the category\/ $\Stone_{\rm{qop}}$ of
Stone spaces  and quasi-open mappings as dually equivalent to the
category $\Bool_{\rm{sup}}$ of  Boolean algebras and suprema-preserving
Boolean homomorphisms.}

\medskip

Using the well known fact that {\em complete Boolean algebras correspond  to extremally disconnected compact Hausdorff spaces under the Stone duality,} and arguing as at the end of the first part of the proof of Proposition \ref{begen} ({\em i.e.}, using the fact that {\em a continuous mapping between two extremally disconnected compact Hausdorff spaces is quasi-open if, and only if, it is open}\/), under a further restriction of the Stone duality, we obtain the following corollary:

\medskip

\noindent{\bf Corollary.}
{\em The category\/ $\bf{ECH}_{\rm{op}}$ of extremally disconnected  compact
Hausdorff spa\-ces  and open mappings is dually equivalent to the
category\/ $\CBool_{\rm{sup}}$ of  complete Boolean algebras and suprema-preserving
Boolean homomorphisms.}
\end{nist}

Before we formulate and prove the Fedorchuk Duality Theorem, it is useful for us to recall the de Vries Duality Theorem.

\begin{defi}\label{dval}{\rm (De Vries \cite{deV})}
\rm
We denote by $\DHC$ the category of complete normal contact algebras (see \ref{conalg}); its morphisms $\p:(A,C)\lra (A\ap,C\ap)$ are maps $A\lra A\ap$
satisfying the conditions:

\smallskip

\noindent(DV1) $\p(0)=0$;\\
(DV2) $\p(a\we b)=\p(a)\we \p(b)$, for all $a,b\in A$;\\
(DV3) If $a, b\in A$ and $a\ll_C b$, then $(\p(a^*))^*\ll_{C\ap}
\p(b)$;\\
(DV4) $\p(a)=\bigvee\{\p(b)\st b\ll_{C} a\}$, for every $a\in
A$;

\medskip

{\noindent}the composition $``\diamond$" of
$\p_1:(A_1,C_1)\lra (A_2,C_2)$ with $\p_2:(A_2,C_2)\lra (A_3,C_3)$
in $\DHC$ is defined by
\begin{equation}\label{diamc}
\ \p_2\diamond\p_1 \df (\p_2\circ\p_1)\cuk,
\end{equation}
 where, for objects $(A,C),(A\ap,C\ap)$ in $\DHC$ and any
function $\psi:A\lra A\ap$, one defines
$\psi\cuk:(A,C)\lra (A\ap,C\ap)$ for all $a\in A$ by
\begin{equation}\label{cukfc}
\ \psi\cuk(a)\df \bigvee\{\psi(b)\st b\ll_{C} a\}.
\end{equation}

We call the morphisms of the category $\DHC$ {\em de Vries morphisms}.
\end{defi}

\begin{fact}\label{dvmrem}{\rm (\cite{deV})}
Let $\p:(A,C)\lra(A\ap,C\ap)$ be a  de Vries morphism. Then:

\smallskip

\noindent{\em (a)} $\p(1_A)=1_{A\ap}$;

\smallskip

\noindent{\em (b)} for every $a\in A$, $\p(a^*)\le (\p(a))^*$;

\smallskip

\noindent{\em (c)} for every  $a,b\in A$, $a\ll_C b$ implies
$\p(a)\ll_{C\ap}\p(b)$;

\smallskip

\noindent{\em(d)} if $\p\ap:(A\ap,C\ap)\lra(A'',C'')$ is a  de Vries morphism, such that $\p\ap$ is a suprema-preserving Boolean homomorphism, then $\p\ap\di\p=\p\ap\circ\p$.
\end{fact}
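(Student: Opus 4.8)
The plan is to dispatch the four assertions in order of increasing dependence on the structure at hand: (a)--(c) are routine consequences of the de Vries axioms (DV1)--(DV4) together with the contact axioms, whereas (d) is the one place where the hypothesis that $\p\ap$ preserves suprema is genuinely used.

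For (b), apply (DV2) and then (DV1): $\p(a^*)\we\p(a)=\p(a^*\we a)=\p(0)=0$, which is exactly the inequality $\p(a^*)\le(\p(a))^*$. For (c), suppose $a\ll_C b$; then (DV3) gives $(\p(a^*))^*\ll_{C\ap}\p(b)$, while complementing the inequality of (b) gives $\p(a)\le(\p(a^*))^*$, so axiom ($\ll$3) yields $\p(a)\ll_{C\ap}\p(b)$. For (a), first note that $0\ll_C 1$ holds in any de Vries algebra (from $0\ll_C 0$ via ($\ll$3)); applying (DV3) with $a=0$ and $b=1$ gives $(\p(1_A))^*\ll_{C\ap}\p(1_A)$, i.e. $(\p(1_A))^*(-C\ap)(\p(1_A))^*$, and then axiom (C1) forces $(\p(1_A))^*=0$, that is, $\p(1_A)=1_{A\ap}$.

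The substantive part is (d). Unwinding the definition of the $\DHC$-composition, $\p\ap\di\p=(\p\ap\circ\p)\cuk$, so for every $a\in A$
$$(\p\ap\di\p)(a)=\bigvee\{\,\p\ap(\p(b))\st b\ll_C a\,\}.$$
On the other hand, axiom (DV4) for $\p$ gives $\p(a)=\bigvee\{\,\p(b)\st b\ll_C a\,\}$ in $A\ap$, and since $\p\ap$ is suprema-preserving we may apply $\p\ap$ termwise to this join, obtaining
$$\p\ap(\p(a))=\bigvee\{\,\p\ap(\p(b))\st b\ll_C a\,\}.$$
Comparing the two displays gives $(\p\ap\di\p)(a)=\p\ap(\p(a))$ for all $a\in A$, i.e. $\p\ap\di\p=\p\ap\circ\p$.

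I do not expect a serious obstacle here: the only genuinely non-formal inputs are the auxiliary fact $0\ll_C 1$ and the use of the contact axiom (C1) in part (a). For (d) in particular the argument is purely formal --- the index set $\{b\in A\st b\ll_C a\}$ is literally the same in both joins, so no cofinality or reindexing is needed, and the whole content is simply that $\p\ap$ commutes with arbitrary suprema, which is the standing hypothesis.
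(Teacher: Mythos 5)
Your proof is correct: (a)--(c) follow exactly as you argue from (DV1)--(DV3), the contact axioms and the order properties of $\ll$, and (d) is precisely the point where suprema-preservation of $\p\ap$ is applied to the join in (DV4), with the same index set $\{b\st b\ll_C a\}$ on both sides. The paper itself gives no proof of this Fact (it is quoted from de Vries), so there is nothing to compare against; your argument is the standard one, and the only cosmetic simplification available is that in (a) one may apply (DV3) directly to $0\ll_C 0$ to get $(\p(1_A))^*\ll_{C\ap}\p(0)=0$ and conclude via ($\ll$1) without invoking (C1).
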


De Vries \cite{deV} proved the following duality theorem:

\begin{theorem}\label{dvth}{\rm (\cite{deV})}
The categories $\HC$ and $\DHC$ are dually equivalent.
\end{theorem}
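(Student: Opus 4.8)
The plan is to construct a pair of contravariant functors $\Psi^t:\HC\lra\DHC$ and $\Psi^a:\DHC\lra\HC$, together with natural isomorphisms $e:{\sf Id}_{\HC}\lra\Psi^a\circ\Psi^t$ and $\ep:{\sf Id}_{\DHC}\lra\Psi^t\circ\Psi^a$, and then to adjust $e,\ep$ so that the triangular identities hold. On objects $\Psi^t$ sends a compact Hausdorff space $X$ to its standard contact algebra $(\RC(X),\rho_X)$, which is a complete CA by Example \ref{rct} and, $X$ being normal, even a complete NCA, hence a $\DHC$-object; a continuous map $f:X\lra Y$ is sent to $\Psi^t(f):(\RC(Y),\rho_Y)\lra(\RC(X),\rho_X)$, $G\mapsto\cl_X(f\inv(\int_Y(G)))$. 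The first task is to verify that $\Psi^t(f)$ satisfies (DV1)--(DV4): (DV1) and (DV2) are immediate from the formulas of Example \ref{rct}, while (DV3) and (DV4) follow from the continuity of $f$ together with the descriptions of $\ll_{\rho_Y}$ and of arbitrary joins in $\RC(Y)$. Functoriality needs a short computation showing $\Psi^t(1_X)=1$ and that the set maps $\Psi^t(f)\circ\Psi^t(g)$ and $\Psi^t(g\circ f)$, although possibly distinct, have the same $\cuk$-completion, which by (\ref{diamc}) is precisely what is required.

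On objects $\Psi^a$ sends a complete NCA $(A,C)$ to the space $X_{(A,C)}\df\CL(A,C)$ of all clusters (Definition \ref{defcluclan}), topologised by declaring $\{\lambda_A(a)\st a\in A\}$, where $\lambda_A(a)\df\{\s\in\CL(A,C)\st a\in\s\}$, to be a base of closed sets. This paragraph contains the heart of the argument. First, $X_{(A,C)}$ is compact: the assignment $u\mapsto\s_u$ of Corollary \ref{uniqult} is a surjection $q:S^a(A)\lra X_{(A,C)}$ (surjective by Theorem \ref{conclustth}) with $q\inv(\lambda_A(a))=\bigcap\{s_A(c)\st a\ll_C c\}$ closed, hence $q$ is continuous and $X_{(A,C)}$ is a continuous image of the compact space $S^a(A)$. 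Second, $X_{(A,C)}$ is Hausdorff: given $\s_1\neq\s_2$, pick $a\in\s_1\stm\s_2$; by (CL3) there is $b\in\s_2$ with $a(-C)b$, and by the interpolation axiom (C5)/($\ll$5) there is $c$ with $a\ll_C c$ and $c(-C)b$, whence $c^*\notin\s_1$, $c\notin\s_2$, and $X_{(A,C)}\stm\lambda_A(c^*)$, $X_{(A,C)}\stm\lambda_A(c)$ are disjoint open neighbourhoods of $\s_1,\s_2$ (disjoint by (CL2), as $c\vee c^*=1$). Third, and this is where completeness of $A$ and the full strength of (C1)--(C6) enter, one shows that $a\mapsto\lambda_A(a)$ is a Boolean isomorphism $A\lra\RC(X_{(A,C)})$ carrying $C$ onto $\rho_{X_{(A,C)}}$ (and $\ll_C$ onto $\ll_{\rho_{X_{(A,C)}}}$); injectivity uses the extensionality axiom (C6)/($\ll$6), surjectivity onto $\RC(X_{(A,C)})$ uses completeness, and the equivalence of $aCb$ with $\lambda_A(a)\cap\lambda_A(b)\neq\ems$ follows from Proposition \ref{relrult}(a) in one direction and from (CL1) in the other, once one observes that $uR_{(A,C)}v$ implies $\s_u=\s_v$.

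On morphisms $\Psi^a$ sends a de Vries morphism $\p:(A,C)\lra(A',C')$ to $\Psi^a(\p):X_{(A',C')}\lra X_{(A,C)}$, $\s'\mapsto\{a\in A\st(\p(a^*))^*\in\s'\}$; using (DV1)--(DV4) and Fact \ref{dvmrem} one checks that the image is again a cluster, that $\Psi^a(\p)\inv(\lambda_A(a))=\lambda_{A'}((\p(a^*))^*)$, so that $\Psi^a(\p)$ is continuous, and that $\Psi^a$ preserves identities and the $\di$-composition. For the natural isomorphism $e$, send $x\in X$ to the point-cluster $\s_x\in\CL(\RC(X),\rho_X)$ (Fact \ref{sxcluster}); $e_X$ is injective since $X$ is regular, a homeomorphism onto its image since the regular closed sets form a closed base for $X$ and $e_X\inv(\lambda_{\RC(X)}(F))=F$, and surjective because in the standard contact algebra of a \emph{compact} Hausdorff space every cluster is a point-cluster (by Theorem \ref{conclustth} a cluster equals $\{F\st F\rho_X G\text{ for all }G\in u\}$ for some ultrafilter $u$ on $\RC(X)$, compactness provides $x\in\bigcap u$, and $\s_x\sbe\s$ then forces $\s_x=\s$ by Fact \ref{fact29}). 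For $\ep$, send $(A,C)$ to the de Vries isomorphism $a\mapsto\lambda_A(a)$ of the previous paragraph. Naturality of $e$ and $\ep$ is a routine diagram chase against the definitions of $\Psi^t,\Psi^a$ on morphisms. The main obstacle is the third step above --- recovering $(A,C)$ from $X_{(A,C)}$ as $(\RC(X_{(A,C)}),\rho_{X_{(A,C)}})$ --- together with the surjectivity of $e$; these are exactly the points at which the normal-contact axioms, in particular interpolation and extensionality, and a Zorn/ultrafilter argument, must be used in an essential way.
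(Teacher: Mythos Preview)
Your overall strategy coincides with the paper's sketch: same functors $\Psi^t,\Psi^a$, same cluster-space construction, same natural isomorphisms $x\mapsto\s_x$ and $a\mapsto\lambda_A(a)$. Your added detail (compactness via the surjection $q:S^a(A)\to\CL(A,C)$, the explicit Hausdorff argument from interpolation, the identification of $\lambda_A$ as a CA-isomorphism) is correct and more explicit than what the paper records.

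There is, however, a genuine error in your definition of $\Psi^a$ on morphisms. For a general de Vries morphism $\p$ the set
\[
\{a\in A\st (\p(a^*))^*\in\s'\}
\]
need \emph{not} be a cluster; your claimed check of (CL3) will fail. Concretely, take $X=[0,1]$, $Y=[0,1]^2$, $f(x)=(x,x)$, and $a=\{(s,t)\in Y: t\le s^2\}\in\RC(Y)$. Then $f^{-1}(a)=\{0,1\}$, so $\p(a^*)=\cl_X(f^{-1}(\int_Y(a^*)))=\cl_X((0,1))=X$ and $(\p(a^*))^*=\emptyset$. Hence $a$ is not in your set for any $\s'$. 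But for $\s'=\s_0$ the correct value is $\Psi^a(\p)(\s_0)=\s_{f(0)}=\s_{(0,0)}$, which \emph{does} contain $a$ since $(0,0)\in a$. Thus your set is a proper subset of the cluster $\s_{(0,0)}$ and, by Fact~\ref{fact29}, cannot itself be a cluster. Correspondingly, your preimage formula $\Psi^a(\p)^{-1}(\lambda_A(a))=\lambda_{A'}((\p(a^*))^*)$ is false: the preimage of a regular closed set under a continuous map need not be regular closed.

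The paper's (i.e., de Vries's) formula
\[
\Psi^a(\p)(\s')=\{a\in A\st \forall\, b\ll_C a^*\;\;(\p(b))^*\in\s'\}
\]
is what you need; with it one gets $\Psi^a(\p)^{-1}(\lambda_A(a))=\bigcap_{b\ll_C a^*}\lambda_{A'}((\p(b))^*)$, which is closed (though not necessarily regular closed), giving continuity. Your formula happens to agree with the correct one when $\p$ is a Boolean homomorphism, but not for arbitrary de Vries morphisms, where $\p(a^*)$ may strictly dominate each $\p(b)$ with $b\ll_C a^*$ while equalling their join.
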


 \noindent{\em Sketch of the proof.}~ One defines contravariant functors $$\Psi^t:\HC\lra\DHC ,\quad \Psi^a:\DHC\lra\HC, $$  by
 \begin{itemize}
  \item $\Psi^t(X,\tau)\df (\RC(X,\tau),\rho_X),$ for all
$X\in\card\HC$;

\item $\Psi^t(f)(G)\df \cl(f\inv(\int(G))),$ for all
$f\in\HC(X,Y)$ and $G\in \RC(Y)$;
  \item $\Psi^a(A,C)\df (\CL(A,C),\TT),$ for all $(A,C)\in\card\DHC$,
where $\TT$ is the topology on $\CL(A,C)$ having
the family $\{\ups_{(A,C)}(a)\st a\in A\}$ with $\ups_{(A,C)}(a) =
\{\s \in \CL(A,C)\st  a \in \s\}$ as a base of closed sets;

\item $\Psi^a(\p)(\s\ap)\df \{a\in A\st \forall\, b\in \!A\; (\,b\ll_C a^* \Longrightarrow (\p(b))^*\in\s\ap\,)\},$ for all
$\p\in\DHC((A,C),(A\ap,C\ap))$ and
$\s\ap\in\CL(A\ap,C\ap)$.
\end{itemize}

\noindent Then one shows that, for every $(A,C)\in\card\DHC$,
$\up_{(A,C)}:(A,C)\lra\Psi^t(\Psi^a(A,C))$
is a $\DHC$-isomorphism, producing the natural isomorphism
$$\ups:{\sf Id}_{\,\DHC}\lra\Psi^t\circ\Psi^a.$$
Likewise,
$$t\ap:{\sf Id}_{\,\HC}\lra\Psi^a\circ\Psi^t,$$ with
$t\ap_{X}(x)\df \s_x$,
  for every $X\in\card\HC$ and all $ x\in X$,
is a natural isomorphism. Thus, the categories $\HC$ and $\DHC$  are dually equivalent.
\sqs

\medskip

\noindent We note that, in \cite{deV}, de Vries used regular open
sets, rather than regular closed sets, as we do here. Hence, above we have paraphrased
his definitions in terms of regular
closed sets.

\begin{rem}\label{dvasim}
  If $\p\in\DHC((A,C),(A\ap,C\ap))$ and $\p$ is a Boolean homomorphism, the definition of the contravariant functor $\Psi^a$ on
 morphisms may be simplified {\em (see \cite{D2009})},  as follows: in the notation of {\em \ref{uniqult}}, one puts
 $$\Psi^a(\p)(\s_{u\ap})\df\s_{\p\inv(u\ap)},$$
 for every
 ultrafilter  $u\ap$ in $A\ap$.

{\rm Indeed, since $\p$ is a Boolean homomorphism, we obtain that $\p\inv(u\ap)$ is an ultrafilter in $A$. Let now $a\in\Psi^a(\p)(\s_{u\ap})$, and suppose that
$a\nin\s_{\p\inv(u\ap)}$. Then there exists $b\in\p\inv(u\ap)$
such that $a(-C)b$. Thus, $b\ll a^*$. Then there is $c\in A$
with $b\ll c\ll a^*$. Since $a\in\Psi^a(\p)(\s_{u\ap})$, we obtain
$(\p(c))^*\in\s_{u\ap}$. On the other hand, we have that
$\p(b)\in u\ap$ and, by Fact \ref{dvmrem}(c), $\p(b)\ll\p(c)$,
i.e., $\p(b)(-C\ap)(\p(c))^*$, a contradiction. Hence,
$a\in\s_{\p\inv(u\ap)}$. This shows that
$\Psi^a(\p)(\s_{u\ap})\sbe \s_{\p\inv(u\ap)}$. Therefore, by Fact \ref{fact29},
$\Psi^a(\p)(\s_{u\ap})= \s_{\p\inv(u\ap)}$.}
\end{rem}

\begin{nist}\label{dcompn}
\rm We now formulate the Fedorchuk Duality Theorem. Let
$\HC_{\rm{qop}}$ be the category of  compact Hausdorff spaces and  their quasi-open mappings, and
$\Fed $ the category of complete normal
contact algebras whose morphisms $\p:(A,C)\lra (A\ap,C\ap)$ 
are all suprema-preserving Boolean homomorphisms $\p:A\lra A\ap$
satisfying the following condition

\medskip

\noindent (F) $\p(a)C\ap\p(b)$ implies
$aCb$, for all $a,b\in A$.
\bigskip

Since $\p$ preserves the negation, we see
immediately that condition (F) is equivalent to asking that

\medskip

\noindent(F$'$) $a\ll_C b$ implies
$\p(a)\ll_{C\ap}\p(b)$, for all $a,b\in A$.

\medskip

The Fedorchuk Duality Theorem states:

\medskip

\noindent{\bf Theorem.} {\rm (Fedorchuk \cite{F})}
{\em The categories $\HC_{\rm{qop}}$ and $\Fed $ are dually equi\-valent.}

\bigskip

In \cite{F}, Fedorchuk proves that the category $\Fed$ is a (non-full)
subcategory of the category $\DHC$, $\Psi^t(\HC_{\rm{qop}})\sbe \Fed$ and $\Psi^a(\Fed)\sbe
\HC_{\rm{qop}}$, where $\Psi^t$ and $\Psi^a$ are de Vries'
contravariant functors (see Theorem \ref{dvth}). Then, applying the de
Vries Duality Theorem, he concludes that the categories
$\HC_{\rm{qop}}$ and $\Fed $ are dually equivalent.

For later use we fix the notation $\Phi^t\df \Psi^t|_{\HC_{\rm{qop}}}$ and $\Phi^a\df \Psi^a|_{\Fed}$ for the restrictions of de Vries' functors, so that we have the contravariant functors

\begin{center}
$\Phi^t:\HC_{\rm{qop}}\lra\Fed \mbox{ and } \Phi^a:\Fed\lra\HC_{\rm{qop}}.$
\end{center}

\medskip
\end{nist}

\section{A new approach to the Fedorchuk Duality}

Applying Proposition \ref{mere duality}, we now embark on providing
an alternative proof of the Fedorchuk Duality Theorem, {\em without} making use of the
de Vries Duality Theorem. In this way, we
will also obtain a topological interpretation of all algebraic notions
used in the Fedorchuk Duality Theorem.

\begin{nist}\label{n1n}
\rm
 In view of Section 3, {\em throughout this section we use the following notation:}
$$\AA\df\CBool_{\rm{sup}}, \ \ \BB\df\bf{ECH}_{\rm{op}}, \ \
\CC\df\HC_{\rm{qop}},$$
{\em  with $I:\BB\hookrightarrow\CC$ denoting the inclusion functor;
 $\PP$ denotess the class of all irreducible continuous maps between compact Hausdorff spaces with domain in $\BB$.}

Trivially, $\BB$ is a full
subcategory of $\CC$ that is closed under $\CC$-isomorphisms.
Less trivially, by \cite{Pon1}, we know that the class $\PP$ is actually a class of $\CC$-morphisms.
The class $\PP$ obviously satisfies conditions (P1-3) of Section 3.
 From \ref{niab}(c) we know that condition (P4)
is also satisfied. Finally,
Propositions \ref{HJTHM} and \ref{begen} show that
condition (P5$^*$)  is fulfilled and, hence, also
 condition (P5) (and (P4$^*$)).
  Thus, Corollary \ref{stronger conditions} confirms the following fact:

\medskip

\noindent{\bf Proposition.}\ \ {\em  The category\ \ $\bf{ECH}_{\rm{op}}$\ \ is a full coreflective subcategory of the category\ \ $\HC_{\rm{qop}}$.}

\medskip

With the restrictions $$T\df S^a\upharpoonright_{\AA}\ \ \mbox{ and }\ \ S \df S^t\upharpoonright_{\BB}$$ of the functors furnishing the Stone Duality, using Corollary \ref{qzhcth} we obtain the contravariant functors $T:\AA\lra\BB$ and $S:\BB\lra\AA$. Together with the restrictions $\eta\df t\upharpoonright_{\BB}$ and $\ep\df s\upharpoonright_{\AA}$ of Stone's natural isomorphisms (so that one has natural isomorphisms $\eta: {\sf Id}_{\BB}\lra T\circ S$
 and $\ep: {\sf Id}_{\AA}\lra S\circ T$), they realize a dual equivalence between the categories $\AA$ and $\BB$.

 Defining the category $\DD$ as in Proposition \ref{mere duality}, we obtain the
 full embedding $J:\AA\lra\DD$  and the dual equivalence $\tilde{T}:\DD\lra\CC$ which extends the dual
  equivalence   $T:\AA\lra \BB$, so that $I\circ T=\tilde{T}\circ J$, as given by Proposition \ref{mere duality}. We now prove that the categories $\Fed $ and $\DD$
  are equivalent, thus completing our alternative proof of the Fedorchuk Duality Theorem.
 In doing so, using  very natural topological arguments, we explain how the algebraic properties of the objects and morphisms of the category $\Fed$ correspond via the functors $\Phi^a$ and  $\Phi^t$ of \ref{dcompn}
 to their topological counterparts.
 \end{nist}

 We start by proving a simple lemma.

 \begin{lm}\label{lem1}
 Let $D=(A,\pi)\in|\DD|$, and for every $a,b\in A$, define
 $$aC_D b\Leftrightarrow \pi(s_A(a))\cap\pi(s_A(b))\nes.$$
 Then $(A,C_D)$ is a complete normal contact algebra.
 \end{lm}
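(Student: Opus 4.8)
The plan is to verify the NCA axioms directly, translating each algebraic condition on $C_D$ into a topological statement about the regular closed sets $\pi(s_A(a))\subseteq C$, where $C$ is the codomain of $\pi$ and $\pi:EC\to C$ (with $EC=S^a(A)=T(A)$) is an irreducible continuous map between compact Hausdorff spaces. Recall that $s_A(a)$ is a clopen subset of the Stone space $S^a(A)$, so (since $\pi$ is closed) $\pi(s_A(a))$ is closed in $C$; moreover, by Theorem \ref{thalpon} applied to the closed irreducible map $\pi$ (here using that $EC$ is extremally disconnected, so $\RC(EC)=\CO(EC)$), the map $\varphi_\pi\colon\CO(EC)\to\RC(C)$, $H\mapsto\pi(H)$, is a Boolean isomorphism. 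In particular each $\pi(s_A(a))$ is a regular closed subset of $C$, and $a\mapsto\pi(s_A(a))$ is an injective Boolean homomorphism from $A$ into $\RC(C)$ sending meets, joins, and complements in $A$ to the corresponding operations in $\RC(C)$, and sending $0$ to $\emptyset$ and $1$ to $C$. Thus $C_D$ is exactly the restriction to the image of this embedding of the standard contact relation $\rho_C$ on $\RC(C)$ from Example \ref{rct}.

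Once this dictionary is set up, the first four axioms are immediate. For (C1): if $a\neq 0$ then $\pi(s_A(a))$ is a nonempty regular closed set, so it meets itself, giving $aC_Da$. Axiom (C2) is the observation that $aC_Db$ forces both $\pi(s_A(a))$ and $\pi(s_A(b))$ nonempty, hence $a\neq 0\neq b$ (using injectivity of the embedding, $\pi(s_A(a))=\emptyset$ iff $a=0$). Axiom (C3) is symmetry of intersection. For (C4): $\pi(s_A(a))\cap(\pi(s_A(b))\cup\pi(s_A(c)))\neq\emptyset$ iff $\pi(s_A(a))\cap\pi(s_A(b))\neq\emptyset$ or $\pi(s_A(a))\cap\pi(s_A(c))\neq\emptyset$, and $\pi(s_A(b\vee c))=\pi(s_A(b))\cup\pi(s_A(c))$ since the embedding preserves joins (join in $\RC$ is union). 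So $(A,C_D)$ is a contact algebra.

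It remains to check the normality axioms (C5) and (C6), or equivalently (after noting $a\ll_{C_D}b\iff \pi(s_A(a))\subseteq\operatorname{int}_C(\pi(s_A(b)))$ via the standard contact relation on $\RC(C)$) that the relation $\ll_{C_D}$ satisfies ($\ll$5) interpolation and ($\ll$6). Since $C$ is a compact Hausdorff space it is normal (regular is enough), so the standard contact algebra $(\RC(C),\rho_C)$ is a complete NCA by Example \ref{rct}; thus $\rho_C$ satisfies (C5) and (C6). The subtlety — and the one genuine point requiring care — is that $C_D$ is the restriction of $\rho_C$ to the subalgebra $A'\df\{\pi(s_A(a))\mid a\in A\}$ of $\RC(C)$, and subalgebras of NCAs need not inherit (C5)/(C6) in general; what saves us is that $A'=\varphi_\pi(\CO(EC))$ is not an arbitrary subalgebra but precisely the image of $\RC(EC)$ under the Alexandroff isomorphism, so it is itself a \emph{complete} Boolean algebra and in fact all of $\RC(C)$ if one works at the level of the absolute — more concretely, for interpolation one uses that $a\ll_{C_D}b$ means $\pi(s_A(a))\subseteq\operatorname{int}_C(\pi(s_A(b)))$; pulling back through the Boolean isomorphism $\varphi_\pi$ one may use the fact (already exploited in the proof of Proposition \ref{begen}, via $\varphi_\pi^{-1}(K)=\cl_{EC}(\pi^{-1}(\operatorname{int}_C(K)))$) that there is enough room inside $EC$, together with interpolation in $(\RC(EC),\rho_{EC})=(\CO(EC),\rho_{EC})$, to produce an intermediate element. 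For (C6): given $a\neq 1$, so $\pi(s_A(a))\neq C$, the open set $C\setminus\pi(s_A(a))$ is nonempty, and since $A'\cong\CO(EC)$ is a base for $EC$ and $\pi$ is irreducible (hence $\pi^{-1}$ of a nonempty open set has nonempty interior), one finds $b\neq 0$ with $\pi(s_A(b))\subseteq C\setminus\pi(s_A(a))$, giving $b\,(-C_D)\,a$. Completeness of $(A,C_D)$ is simply the hypothesis that $A\in|\AA|=|\CBool_{\mathrm{sup}}|$. I expect the interpolation axiom (C5)/($\ll$5) to be the only step that needs a real argument rather than a one-line translation; everything else is bookkeeping through the Alexandroff isomorphism of Theorem \ref{thalpon}.
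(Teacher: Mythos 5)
Your setup is the right one, and it is in fact the paper's: by Theorem \ref{thalpon} the map $\p_\pi\colon\CO(T(A))\to\RC(C)$, $H\mapsto\pi(H)$, is a Boolean isomorphism, and composing with Stone's isomorphism $s_A$ gives the correspondence $a\mapsto\pi(s_A(a))$. But there is a genuine gap in how you finish. You only record that $a\mapsto\pi(s_A(a))$ is an \emph{injective} homomorphism into $\RC(C)$, and consequently you treat $C_D$ as the restriction of $\rho_C$ to a subalgebra $A'$, worry (correctly, in general) that (C5)--(C6) do not pass to subalgebras, and then close (C5) with an appeal to ``enough room inside $EC$'' plus ``interpolation in $(\CO(EC),\rho_{EC})$''. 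That last step does not work as stated: on $\CO(EC)$ the relation $\ll_{\rho_{EC}}$ is just $\le$ (a clopen set equals its own interior), so interpolation there is trivial and says nothing about $\ll_{C_D}$, which under your dictionary is the strictly stronger relation $\pi(s_A(a))\sbe\int_C(\pi(s_A(b)))$. The real source of (C5) is the normality of the compact Hausdorff space $C$, i.e.\ the fact that $(\RC(C),\rho_C)$ is an NCA, and to pull an interpolant back into $A$ you need your map to be \emph{onto} $\RC(C)$.

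And it is onto: $s_A$ maps $A$ onto $\CO(T(A))$ (Stone duality), and $\p_\pi$ maps $\CO(T(A))=\RC(T(A))$ onto $\RC(C)$ (Theorem \ref{thalpon}), so $\p_\pi\circ s_A\colon A\to\RC(C)$ is a Boolean isomorphism and $A'=\RC(C)$; there is no proper subalgebra to worry about. With this observation the whole lemma is a one-line transport of structure, which is exactly the paper's proof: $C_D$ is by definition the pullback of $\rho_C$ along the isomorphism $\p_\pi\circ s_A$, so $(A,C_D)$ is CA-isomorphic to the standard contact algebra $(\RC(C),\rho_C)$, which is a complete NCA by Example \ref{rct} because $C$ is compact Hausdorff, hence normal. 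Your verifications of (C1)--(C4) and your argument for (C6) are fine (though for (C6) irreducibility of $\pi$ is not needed: the preimage of a non-empty open set under a continuous surjection is already non-empty and open), but they become superfluous once surjectivity is noted.
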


 \begin{proof}
 We have that  $A\in|\AA|$, $\pi\in\PP$, $\pi: T(A)\lra X$   and $X\in|\CC|$.
Clearly,  $T(A)=S^a(A)$, $\CO(T(A))=\RC(T(A))$, and $\pi$ is a closed irreducible map. Thus, by Theorem \ref{thalpon},
$\p_\pi\circ s_A:A\lra \RC(X)$ is a Boolean isomorphism. Here $(\p_\pi\circ s_A)(a)=\pi(s_A(a))$ and, hence,
$a C_D b \Leftrightarrow (\p_\pi\circ s_A)(a)\cap (\p_\pi\circ s_A)(b)\nes,$ for all $a,b\in A$. Since $(\RC(X),\rho_X)$ is a CNCA,
we obtain that $(A,C_D)$ is also a CNCA.
(Briefly: we have just transported the NCA-structure on $\RC(X)$ to $A$ using Boolean isomorphism $\p_\pi\circ s_A$.)
\end{proof}

Clearly, if $D=(A,\pi)\in|\DD|$, then the definition of the relation $C_D$ from Lemma \ref{lem1} can be expressed in the following equivalent form:
for all $a,b\in A$,
\begin{equation}\label{CD}
aC_D b\iff \ex\, u,v\in\ult(A):\, a\in u, b\in v\mbox{ and }\pi(u)=\pi(v).
\end{equation}

 \begin{lm}\label{lem2}
 Let $(A,C)$ be a CNCA and $R_{(A,C)}$ be the equivalence relation of {\em Definition \ref{relr}} (see also {\em Proposition \ref{relrult}(b)}), i.e.,
  for all $u,v\in T(A)$, $$uR_{(A,C)}v\Leftrightarrow u\times v\sbe C.$$
  Then the natural quotient mapping  $\pi_{(A,C)}:T(A)\lra T(A)/R_{(A,C)}$ is an irreducible mapping, and $T(A)/R_{(A,C)}$ is a compact Hausdorff space, i.e., $\pi_{(A,C)}\in\PP$.
 \end{lm}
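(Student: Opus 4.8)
The plan is to show that $R_{(A,C)}$ is a closed equivalence relation on the compact Hausdorff space $T(A)$ — which will give that the quotient $T(A)/R_{(A,C)}$ is compact Hausdorff and $\pi_{(A,C)}$ is a closed (hence perfect) surjection — and then separately verify irreducibility. First I would recall from Theorem \ref{conclustth} and Corollary \ref{uniqult} that, since $(A,C)$ is an NCA, each ultrafilter $u\in T(A)$ is contained in a unique cluster $\s_u=\{a\in A\st aCb\text{ for every }b\in u\}$; and from Proposition \ref{relrult}(b) that $R_{(A,C)}$ is an equivalence relation. The key observation linking the two is that $uR_{(A,C)}v$ holds if and only if $\s_u=\s_v$: indeed $u\times v\sbe C$ forces $v\sbe\s_u$, whence $\s_v\sbe\s_u$ by \ref{cl3} (since every element of $\s_v$ is in contact with every element of $v$, hence with everything $v$ is in contact with), and then $\s_u=\s_v$ by Fact \ref{fact29}; the converse is clear because $u\sbe\s_u=\s_v$ means $u\times v\sbe C$ reading off \ref{cl1}. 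Thus the $R_{(A,C)}$-classes are exactly the fibres of the map $u\mapsto\s_u$ from $T(A)$ onto $\CL(A,C)$.

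Next I would prove that $R_{(A,C)}$ is closed in $T(A)\times T(A)$. Suppose $(u,v)\notin R_{(A,C)}$, so there are $a\in u$, $b\in v$ with $a(-C)b$. By the de Vries axiom \ref{c5} (equivalently \ref{di5}) applied to $a\ll_C b^*$, interpolate to get $a\ll_C c\ll_C b^*$ for some $c$, so that $a(-C)c^*$ and $c(-C)b$. Then $s_A(a)\times s_A(c)$ is an open neighbourhood of $(u,v)$ in $T(A)\times T(A)$: any $(u',v')$ in it has $a\in u'$, $c\in v'$, and since $a(-C)c^*$ — wait, I need $a'(-C)b'$ for some $a'\in u'$, $b'\in v'$; taking $a'=a$ and $b'=c^*$ won't do since $c^*\notin v'$. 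Instead use $s_A(a)\times s_A(c^{**})=s_A(a)\times s_A(c)$ but pick the witness pair $(a,b)$ with $b=c^*$ — this requires $c^*\in v'$, which fails. The correct choice: since $a(-C)c^*$ is the wrong relation, use instead that $c(-C)b$ with $c\ll_C b^*$, interpolate once more if needed; cleanest is to take the neighbourhood $s_A(a)\times s_A(c)$ and use the pair of witnesses $a\in u'$ and, noting $c\ll_C b^*$ gives $c(-C)b$, hmm. Let me restate: choose $a\ll_C c$ with $c(-C)b$; then on $s_A(a)\times s_A(b)$ itself, with the genuinely open refinement obtained by interpolating $a\ll_C a_1\ll_C c$ and replacing $s_A(a)$ by the (not necessarily open) set $\{u':a_1\in u'\}$ — this is getting delicate. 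The honest approach: $R_{(A,C)}$ is closed because its complement is open — given $(u,v)$ with witness $a(-C)b$, the basic open set $s_A(a)\times s_A(b)$ itself lies in the complement, since $a\in u'$, $b\in v'$ with $a(-C)b$ already shows $(u',v')\notin R_{(A,C)}$. This is immediate and requires no interpolation at all. So $R_{(A,C)}$ is closed.

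It follows that the quotient $Y\df T(A)/R_{(A,C)}$ is Hausdorff (a quotient of a compact Hausdorff space by a closed equivalence relation), hence compact Hausdorff, and that $\pi\df\pi_{(A,C)}$ is a closed continuous surjection. For irreducibility, let $F\subsetneq T(A)$ be proper closed; I must show $\pi(F)\neq Y$, i.e., some $R_{(A,C)}$-class is disjoint from $F$. Pick $u_0\notin F$; since $T(A)$ is a Stone space, choose $a_0\in u_0$ with $s_A(a_0)\cap F=\es$, i.e., $a_0\in u$ fails for all $u\in F$, meaning $a_0^*\in u$ for all $u\in F$. I claim the class of $u_0$ avoids $F$: if $v R_{(A,C)} u_0$ then $v\times u_0\sbe C$, so $a_0\in u_0$ gives $x C a_0$ for all $x\in v$; in particular, were $v\in F$ we'd have $a_0^*\in v$, so $a_0^* C a_0$, contradicting $a_0\ll_C a_0$ (which holds by \ref{di3} applied to $a_0\ll_C a_0$? — rather: $a_0(-C)a_0^*$ would need $a_0\ll_C a_0$, which is \emph{not} an NCA axiom). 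I would instead argue: choose $a_0\ll_C b_0$ with $a_0\in u_0$ (possible since $u_0$ is an ultrafilter and by axiom \ref{di5}/\ref{di6}, shrinking $a_0$ so that $a_0\ll_C a_0'$ for some $a_0'\in u_0$, then take $a_0$ small with $s_A(a_0)\cap F=\es$ and $a_0\ll_C a_1$); then for $v$ in the class of $u_0$, $a_1\in u_0$ forces $x C a_1$ for all $x\in v$, while $v\in F$ gives $a_0^*\in v$, whence $a_0^* C a_1$ — but $a_0\ll_C a_1$ means $a_1^*\ll_C a_0^*$... I need $a_0^*(-C)a_1$, i.e., $a_1\ll_C a_0^{**}=a_0$, opposite direction. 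The fix is to choose $a_1\in u_0$ with $a_1\ll_C a_0$ and $s_A(a_0)\cap F=\es$; then $v$ in the class of $u_0$, $a_0\notin$ any member of $F$, so if $v\in F$ then $a_0^*\in v$; also $a_0\in u_0$ and $v R u_0$ give $a_0^* C a_0$? No — $vRu_0$ and $a_0\in u_0$ give $(\forall x\in v)\,xCa_0$, so $a_0^* C a_0$, and since $a_1\ll_C a_0$ i.e. $a_1(-C)a_0^*$, taking... the contradiction I actually get is: pick $a_0\in u_0\setminus\bigcup F$ so all of $F$ contains $a_0^*$; pick (by \ref{di6} and ultrafilter-ness) $a_1\in u_0$ with $a_0^* \ll_C a_1^*$, equivalently $a_1\ll_C a_0$; wait I want to conclude $a_0^*\,(-C)\,a_1$ to contradict $a_0^* C a_0\ge$ nothing. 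This final bookkeeping — getting the right element of $u_0$ whose complement is $C$-separated from something in $u_0$, via axioms \ref{di5}--\ref{di7} — is the one genuinely fiddly point, and I expect it to be the main obstacle; everything else (closedness of $R$, compactness and Hausdorffness of the quotient) is routine. I would resolve it by: since $a_0\in u_0$ and $u_0$ is an ultrafilter, $a_0\neq 0$, so by \ref{di6} there is $b\neq 0$ with $b\ll_C a_0$; by \ref{di5}, $b\ll_C c\ll_C a_0$; since $a_0$ can be chosen with $s_A(a_0)\cap F=\es$ and then $b,c$ are $\le a_0$ hence also have $s_A(\cdot)\cap F=\es$ is not automatic — so instead run it the other way: start from \emph{any} $a_0\in u_0$ with $s_A(a_0)\cap F=\es$, set $b=a_0$; for any $v$ with $vR_{(A,C)}u_0$ and $v\in F$ we get $a_0^*\in v$ and $a_0\in u_0$ so $a_0^*Ca_0$; but also, since $v\in F$ and $F$ is closed while $v\ne u_0$... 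Actually the clean route avoids picking $v$: $\pi(F)\ne Y$ iff the saturation $R_{(A,C)}[F]=\bigcup_{u\in F}[u]$ is not all of $T(A)$; and $[u]$ = cluster-fibre, so $R_{(A,C)}[F]=\pi^{-1}\pi(F)$ is the union of point-clusters meeting $F$, which I show misses $u_0$ by the displayed contact contradiction once the element of $u_0$ is chosen using the interpolation $b\ll_C a_0$ with $b\in u_0$ (possible as $u_0$ is prime and $a_0=\bigvee\{b:b\ll_C a_0\}$ by normality, so some such $b$ lies in $u_0$), giving $b(-C)a_0^*$, hence $b$ and $a_0^*$ cannot both lie in $R_{(A,C)}$-related ultrafilters, so no ultrafilter of $F$ is $R$-related to $u_0$. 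That completes irreducibility, and hence $\pi_{(A,C)}\in\PP$.
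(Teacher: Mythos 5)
The first half of your argument is correct and is in fact a genuinely simpler route than the paper's: you show that $R_{(A,C)}$ is closed as a subset of $T(A)\times T(A)$ (immediate, as you finally observe, from a witness pair $a\in u$, $b\in v$ with $a(-C)b$ and the basic open box $s_A(a)\times s_A(b)$), and then invoke the standard fact that the quotient of a compact Hausdorff space by an equivalence relation closed in the square is compact Hausdorff with closed quotient map. The paper instead verifies Engelking's closed-decomposition condition directly (each class is closed, and the union of the classes contained in a given open set is open), which needs the cluster machinery of Theorem \ref{conclustth} and Corollary \ref{uniqult}; your route bypasses that.

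The irreducibility part, however, has a genuine gap. Your final argument hinges on the claim that, having chosen $a_0\in u_0$ with $s_A(a_0)\cap F=\es$, there is some $b\in u_0$ with $b\ll_C a_0$, justified by ``$u_0$ is prime and $a_0=\bigvee\{b\st b\ll_C a_0\}$''. The identity $a_0=\bigvee\{b\st b\ll_C a_0\}$ does hold in an NCA, but ultrafilters are prime only for \emph{finite} joins, so they need not contain any member of an infinite family whose join they contain. Concretely, in $(\RC([0,1]),\rho_{[0,1]})$ take $a_0=[0,1/2]$ and an ultrafilter $u_0$ containing every $[1/2-\ep,1/2]$: then $a_0\in u_0$, yet no $b\ll a_0$ lies in $u_0$, since such a $b$ is a regular closed set contained in $[0,1/2)$, hence in some $[0,1/2-\d]$, and then $b\we[1/2-\d/2,1/2]=0$. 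Worse, the statement you are trying to prove --- that the $R_{(A,C)}$-class of an \emph{arbitrarily chosen} $u_0\notin F$ misses $F$ --- is false in general (take $F=\{w\}$ with $[w]$ containing at least two points and $u_0\in[w]\stm\{w\}$), so no bookkeeping with elements of $u_0$ can rescue that strategy. The repair is exactly the paper's move: from $F\sbe s_A(a_0^*)$ and $a_0\neq 0$, axiom (C6) (equivalently ($\ll$6)) gives a nonzero $b\ll_C a_0$; now take \emph{any} ultrafilter $v$ containing $b$, not necessarily $u_0$. For every $w\in F$ one has $a_0^*\in w$ and $b\in v$ with $b(-C)a_0^*$, so $v$ is not $R_{(A,C)}$-related to $w$; hence the class of $v$ avoids $F$ and $\pi_{(A,C)}(F)\neq T(A)/R_{(A,C)}$. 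You came within a hair of this --- you produced the nonzero $b\ll_C a_0$ --- but discarded it because it need not lie in $u_0$, whereas the point is that one is free to switch to a fresh ultrafilter containing $b$.
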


  \begin{proof}
  For brevity,  we put $Y\df T(A)$,
   $s\df s_A$,  $R\df R_{(A,C)}$, $\pi\df \pi_{(A,C)}$
   and $X\df Y/R$.
First we prove that the mapping $\pi$ is irreducible. Suppose
that there exists a proper closed subset $G$ of $Y$ such that
$\pi(G)=X$, and consider $u\in Y\stm G$. Since $G$ is compact subset of $Y$
and $s:A\lra \CO(Y)$ is a Boolean isomorphism, there exists $a\in
A$ such that $G\sbe s(a)\sbe Y\stm\{u\}$. Hence $a\neq 1$; thus
$a^*\neq 0$. Then, by condition \ref{di6} of \ref{def:ll}, there exists $b\neq 0$ such
that $b\ll a^*$, {\em i.e.}, $b(-C)a$. By \cite[Corollary 2.17]{kop89},
there exists an ultrafilter $v$ in $A$ such that $b\in v$. Then
$a^*\in v$ and, hence, $a\nin v$. Then $v\nin s(a)$ and, therefore,
$v\in Y\stm G$ follows. There now exists $w\in G$ such that
$\pi(w)=\pi(v)$, {\em i.e.}, $vRw$. Since $w\in G$, we obtain $w\in
s(a)$, {\em i.e.}, $a\in w$. So, we have $a\in w$, $b\in v$, with
$v\times w\sbe C$, which implies $aCb$: a contradiction. This shows that
the mapping $\pi$ is irreducible.

 Next we show that, for every $u\in Y$, the equivalence class $[u]$ of $u$ is a closed subset of $Y$.
 Indeed, let $u\in Y$ and $v\in Y\stm [u]$. Then $u(-R)v$, {\em i.e}, $u\times v\nsubseteq C$. Consequently, there exist $a\in u$
 and $b\in v$ such that $a(-C)b$. Then $v\in s(b)$ follows. Also, $s(b)\cap [u]=\ems$. Indeed, if $w\in s(b)$ then $b\in w$,
 and since $a(-C)b$, we obtain $u(-R)w$, i.e., $w\nin [u]$. Therefore, $[u]$  is a closed subset of $Y$.

We now prove that $R$ is a closed equivalence relation on $Y$ (in the sense of \cite[2.4.9]{E}). To this end,
we let $U$ be  an open  subset of $Y$ and must prove that the union of all equivalence classes that are contained
in $U$ is open in $Y$. Let $u\in Y$ be such that $[u]\sbe U$. Since $[u]$ is compact, there exists $a\in A$ such
that $[u]\sbe s(a)\sbe U$. Then $a\in u$.
For every $M\sbe Y$, we set $$[M]\df \bigcup\{[v]\st v\in M\}.$$
With $V\df Y\stm [s(a^*)]$ we have $[u]\sbe V$. Indeed, we certainly have
$[u]\cap s(a^*)=\ems$. Suppose that $[s(a^*)]\cap [u]\nes$. Then
there exists $v\in s(a^*)$ such that $[v]\cap [u]\nes$. Thus
$[v]=[u]$. Since $[u]\cap s(a^*)=\ems$, we obtain that $v\in
[v]\cap s(a^*)=\ems$, a contradiction. So, $[u]\sbe V$.

Next, we prove that $V$ is open and consider $v\in V$. Suppose that, for every $b\in v$, $s(b)\nsubseteq V$;
then, $s(b)\cap [s(a^*)]\nes$. Hence, for every $b\in v$ there exist $v_b\in s(b)$ and
$w_b\in s(a^*)$ such that $v_b\in [w_b]$. This means that,
for every $b\in v$ there exist $v_b,w_b\in Y$ such that $b\in v_b$, $a^*\in w_b$ and $v_b R w_b$. Using
Proposition \ref{relrult}(a), we obtain $a^* C b$ for all $b\in v$. Thus, by Corollary \ref{uniqult},
$a^*\in\s_v$, where $\s_v$ is the cluster generated by $v$.
Now, Theorem \ref{conclustth} gives us that there exists an ultrafilter $w$ in $A$ such that $a^*\in w$ and $\s_w=\s_v$.
Hence $v\cup w\sbe \s_v$ and thus, $v\times w\sbe C$, {\em i.e.}, $vRw$. Therefore, $[v]=[w]$. Since $w\in s(a^*)$,
we obtain that $v\in [s(a^*)]=Y\stm V$, a contradiction. Consequently, there exists $b\in v$ such that $s(b)\sbe V$.
Since $v\in s(b)$, we obtain that $V$ is an open subset of $Y$.

Finally, we establish that $V$ is a subset of the union of all equivalence classes that are contained
in $U$. Let $w\in Y$ and $[w]\cap V\nes$. Suppose that $[w]\nsubseteq U$. Then $[w]\nsubseteq s(a)$. Hence,
there exists $v\in (Y\stm s(a))\cap [w]$. Then $v\in s(a^*)$ and $[w]=[v]\sbe [s(a^*)]= Y\stm V$, a contradiction.
Hence, $[w]\sbe U$.

So, $R$ is a closed relation. Then, by the Alexandroff Theorem \cite[Theorem 3.2.11]{E}, $X$ is a compact
 Hausdorff space, {\em i.e.}, $X\in|\CC|$. Since $\pi$ is a closed irreducible mapping, we obtain that $\pi$ is
 quasi-open (see  \ref{skelnewcor}). Hence, $\pi\in \CC(Y,X)$ follows. Since also $Y=T(A)\in|\BB|$,
 we conclude $\pi\in\PP$.
  \end{proof}

  Following \cite{BBSV}, we call a closed equivalence relation $R$ on a compact Hausdorff
space $X$ {\em irreducible}\/ if the natural quotient mapping $\pi_R: X \lra X/R$ is irreducible.

 \begin{pro}\label{mainpro}
 For a complete Boolean algebra $A$, let ${\rm NCRel}(A)$ be the set of all normal contact relations on $A$  and ${\rm IRel}(T(A))$ the set of all closed irreducible equivalence relations on $T(A)$.
 Then the function  $$f:{\rm NCRel}(A)\lra {\rm IRel}(T(A)), \ \ C\mapsto R_{(A,C)},$$ is a bijection, and $f\inv(R)= C_{(A,\pi_R)}$, for every $R\in{\rm IRel}(T(A))$.
 \end{pro}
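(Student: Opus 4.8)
The plan is to verify that $f$ is a bijection by exhibiting the map $g\colon{\rm IRel}(T(A))\lra{\rm NCRel}(A)$, $R\mapsto C_{(A,\pi_R)}$, as a two-sided inverse. First I would check that $f$ and $g$ are well defined. For $C\in{\rm NCRel}(A)$, Proposition~\ref{relrult}(b) makes $R_{(A,C)}$ an equivalence relation on $T(A)=\ult(A)$, and Lemma~\ref{lem2} shows it is closed and irreducible, so $f(C)=R_{(A,C)}\in{\rm IRel}(T(A))$. For $R\in{\rm IRel}(T(A))$, the quotient map $\pi_R\colon T(A)\lra T(A)/R$ is a closed irreducible map whose domain $T(A)=S^a(A)$ is extremally disconnected compact Hausdorff (as $A$ is complete), hence $\pi_R\in\PP$ and $(A,\pi_R)\in|\DD|$; by Lemma~\ref{lem1}, $g(R)=C_{(A,\pi_R)}\in{\rm NCRel}(A)$.

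Next, $g\circ f={\rm id}$: given $C\in{\rm NCRel}(A)$ and $R\df R_{(A,C)}$, the reformulation~(\ref{CD}) of $C_{(A,\pi_R)}$ says that $a\,C_{(A,\pi_R)}\,b$ holds iff there are $u,v\in\ult(A)$ with $a\in u$, $b\in v$ and $\pi_R(u)=\pi_R(v)$, i.e.\ with $u\,R_{(A,C)}\,v$; by Proposition~\ref{relrult}(a) this is precisely $aCb$. Hence $g(f(C))=C$, and in particular $f$ is injective.

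The main point is $f\circ g={\rm id}$: for $R\in{\rm IRel}(T(A))$, writing $C\df C_{(A,\pi_R)}$, I must show $R_{(A,C)}=R$. By Lemma~\ref{lem1}, $aCb\iff\pi_R(s_A(a))\cap\pi_R(s_A(b))\nes$. The inclusion $R\sbe R_{(A,C)}$ is immediate: if $\pi_R(u)=\pi_R(v)$ then for all $a\in u$, $b\in v$ the point $\pi_R(u)$ lies in $\pi_R(s_A(a))\cap\pi_R(s_A(b))$, so $u\times v\sbe C$. For $R_{(A,C)}\sbe R$ I would argue by contraposition: assume $\pi_R(u)\neq\pi_R(v)$, so the $R$-classes $[u]_R=\pi_R\inv(\pi_R(u))$ and $[v]_R=\pi_R\inv(\pi_R(v))$ are disjoint closed subsets of the Stone space $S^a(A)$ with $u\nin[v]_R$; hence there is $c\in u$ with $s_A(c)\cap[v]_R=\ems$, whence $\pi_R(v)\nin\pi_R(s_A(c))$. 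Since $\pi_R$ is closed, $\pi_R(s_A(c))$ is closed, so $\pi_R\inv(\pi_R(s_A(c)))$ is a closed set not containing $v$, and there is $d\in v$ with $s_A(d)\cap\pi_R\inv(\pi_R(s_A(c)))=\ems$. A short diagram chase then gives $\pi_R(s_A(c))\cap\pi_R(s_A(d))=\ems$, i.e.\ $c\,(-C)\,d$ with $c\in u$, $d\in v$; thus $u\times v\nsubseteq C$, that is $(u,v)\nin R_{(A,C)}$. Therefore $R_{(A,C)}=R$, so $g=f\inv$ and $f\inv(R)=C_{(A,\pi_R)}$, as claimed.

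The step I expect to be the main obstacle is precisely the inclusion $R_{(A,C)}\sbe R$; everything else reduces quickly to~(\ref{CD}), Proposition~\ref{relrult}, and Lemmas~\ref{lem1}--\ref{lem2}. Its proof relies on the interplay of two features of $\pi_R$: closedness, which keeps $\pi_R(s_A(c))$ closed so that its $\pi_R$-preimage can be separated off by a clopen set, and zero-dimensionality of the Stone space $S^a(A)$, which supplies the separating clopens $s_A(c)$ and $s_A(d)$. Alternatively, this inclusion could be routed through the cluster calculus of Theorem~\ref{conclustth} and Corollary~\ref{uniqult}: one shows that $u\,R_{(A,C)}\,v$ holds iff the clusters of $(A,C)$ generated by $u$ and $v$ coincide, that this cluster equals $\{a\in A\st a\in w\text{ for some }w\in\pi_R\inv(\pi_R(u))\}$, and that the assignment $x\mapsto\{a\in A\st a\in w\text{ for some }w\in\pi_R\inv(x)\}$ is injective (by the same clopen-separation idea together with Fact~\ref{fact29}); but the direct argument above seems shorter.
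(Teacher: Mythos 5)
Your proposal is correct and follows essentially the same route as the paper: the same two-sided inverse $g\colon R\mapsto C_{(A,\pi_R)}$, the same use of (\ref{CD}) together with Proposition \ref{relrult}(a) to get $g(f(C))=C$, and the same easy inclusion $R\sbe R_{(A,C)}$. The only minor variation is in the inclusion $R_{(A,C)}\sbe R$: the paper separates $\pi_R(u)$ and $\pi_R(v)$ by disjoint open sets in the Hausdorff quotient $T(A)/R$ and pulls them back to basic clopen sets $s_A(a)\sbe\pi_R\inv(U)$, $s_A(b)\sbe\pi_R\inv(V)$, whereas you use closedness of $\pi_R$ plus clopen separation in the Stone space --- both are valid forms of the same separation argument.
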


  \begin{proof}
    Lemma \ref{lem2} shows that $R_{(A,C)}$ is an irreducible relation on $T(A)$. So, the function $f$ is well-defined. The map $$g:{\rm IRel}(T(A))\lra {\rm NCRel}(A),\ \ R\mapsto C_{(A,\pi_R)},$$ is also well-defined by Lemma \ref{lem1} since, obviously, $(A,\pi_R)\in|\DD|$.

 Consider any $C\in {\rm NCRel}(A)$ and set $R\df f(C)$, $\pi\df \pi_R$ and $D\df (A,\pi)$. Then $g(R)=C_D$. Hence, by (\ref{CD}), we have
  $$aC_D b\Leftrightarrow \ex\, u,v\in\ult(A)\; ( a\in u,\, b\in v\, {\rm and}\, \pi(u)=\pi(v)\,).$$
   Clearly, $\pi(u)=\pi(v)\Leftrightarrow uRv$. Thus, using Proposition \ref{relrult}(a), we obtain $$aC_D b \Leftrightarrow \ex u,v\in\ult(A)\; (a\in u,\, b\in v\;{\rm and}\;uR_{(A,C)}v) \Leftrightarrow aCb\,.$$ This shows $g(f(C))=C$.

 Now consider any $R\in {\rm IRel}(T(A))$. For $C\df g(R)$ one then has $f(C)=R_{(A,C)}$. In order to show $R=R_{(A,C)}$, we let $u,v\in T(A)$ and assume $uRv$. Then $\pi_R(u)=\pi_R(v)$, and we set $x\df \pi_R(u)$. Then $x\in\pi_R(s(a))\cap\pi_R(s(b))$, for all $a\in u$ and $b\in v$. This shows $u\times v\sbe C_{(A,\pi_R)}$, and $uR_{(A,C)}v$ follows. Conversely, consider $u,v\in T(A)$ with $uR_{(A,C)}v$. Assuming $u(-R)v$ we obtain $\pi_R(u)\neq\pi_R(v)$. Since $R$ is a closed equivalence relation, the space $T(A)/R$ is Hausdorff. Thus, there exist open disjoint neighborhoods $U$ and $V$ of $\pi_R(u)$ and $\pi_R(v)$, respectively. Then $u\in\pi_R\inv(U)$ and $v\in\pi_R\inv(V)$. Consequently, there exist $a\in u$ and $b\in v$ such that $s(a)\sbe \pi_R\inv(U)$ and $s(b)\sbe \pi_R\inv(V)$. This obviously implies that $\pi_R(s(a))\cap\pi_R(s(b))=\ems$. We obtain $a(-C_{(A,\pi_R)})b$, which is a contradiction because $u\times v\sbe C_{(A,\pi_R)}$. Hence, $uRv$ follows. This completes the proof of
  $f(g(R))=R$.
   \end{proof}

We set $|\DD|_{\rm nqm}\df \{(A,\pi)\in|\DD|\st \pi \mbox{ is a natural quotient mapping}\}$ and obtain, in the notation of Lemma \ref{lem2}:

\begin{cor}\label{cor1f}
The correspondence $F:|\Fed|\lra |\DD|_{\rm nqm}, (A,C)\mapsto (A,\pi_{(A,C)})$, is a bijection.
 \end{cor}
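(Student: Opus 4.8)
The plan is to reduce the statement to Proposition \ref{mainpro}. Both $|\Fed|$ and $|\DD|_{\rm nqm}$ are fibred over the class $|\CBool|$ of complete Boolean algebras, and $F$ respects these fibrations, acting over each $A\in|\CBool|$ as the bijection $f$ of Proposition \ref{mainpro}. Concretely, an object of $\Fed$ is a pair $(A,C)$ with $A\in|\CBool|$ and $C\in{\rm NCRel}(A)$, so $|\Fed|$ is the disjoint family $\{{\rm NCRel}(A)\st A\in|\CBool|\}$; an object of $|\DD|_{\rm nqm}$ is a pair $(A,\pi)$ with $A\in|\CBool|$ and $\pi$ a natural quotient map in $\PP$; and $F$ manifestly preserves first coordinates.

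First I would verify that $F$ is well defined: for $(A,C)\in|\Fed|$, Lemma \ref{lem2} gives $\pi_{(A,C)}\in\PP$, hence $(A,\pi_{(A,C)})\in|\DD|$, and since $\pi_{(A,C)}$ is by construction the natural quotient map $T(A)\lra T(A)/R_{(A,C)}$, the pair lies in $|\DD|_{\rm nqm}$.

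Next I would identify $|\DD|_{\rm nqm}$ with $\{{\rm IRel}(T(A))\st A\in|\CBool|\}$ via $(A,\pi)\mapsto(A,R_\pi)$, where $R_\pi=\{(u,v)\st\pi(u)=\pi(v)\}$. For $(A,\pi)\in|\DD|_{\rm nqm}$, the map $\pi$ is by definition the natural quotient $T(A)\lra T(A)/R_\pi$ onto its codomain, which is compact Hausdorff (being the codomain of a morphism of $\PP$); hence $R_\pi$ is a closed equivalence relation, and it is irreducible because $\pi=\pi_{R_\pi}$ is irreducible, so $R_\pi\in{\rm IRel}(T(A))$. Conversely, for $A\in|\CBool|$ and $R\in{\rm IRel}(T(A))$, the map $\pi_R$ is irreducible by the definition of ${\rm IRel}$, the space $T(A)/R$ is compact Hausdorff by the Alexandroff theorem (exactly as in the last step of the proof of Lemma \ref{lem2}), and $T(A)\in|\BB|$; thus $(A,\pi_R)\in|\DD|_{\rm nqm}$. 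These two assignments are visibly mutually inverse.

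Finally, composing $F$ with this identification sends $(A,C)$ to $(A,R_{(A,C)})$, because the equivalence relation induced by the quotient map $\pi_{(A,C)}$ is $R_{(A,C)}$ itself; over a fixed $A$ this is precisely the map $f:{\rm NCRel}(A)\lra{\rm IRel}(T(A))$, which by Proposition \ref{mainpro} is a bijection with inverse $R\mapsto C_{(A,\pi_R)}$ (and $C_{(A,\pi_R)}\in{\rm NCRel}(A)$ by Lemma \ref{lem1}). Since $F$ is a bijection on each fibre and respects the fibrations, $F$ is a bijection, with $F\inv(A,\pi_R)=(A,C_{(A,\pi_R)})$. I do not expect a genuine obstacle here; the one point needing care is the bookkeeping that a natural quotient map lying in $\PP$ corresponds exactly --- and not merely up to homeomorphism --- to a closed irreducible equivalence relation on $T(A)$, so that the second coordinate of an object of $|\DD|_{\rm nqm}$ determines the contact relation unambiguously.
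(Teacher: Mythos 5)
Your proposal is correct and follows essentially the same route as the paper: well-definedness via Lemmas \ref{lem1} and \ref{lem2}, and the inverse supplied by $(A,\pi)\mapsto(A,C_{(A,\pi)})$, with both composites reduced to the identities $g\circ f={\rm id}$ and $f\circ g={\rm id}$ of Proposition \ref{mainpro} together with the observation that a natural quotient map is determined exactly by (and determines) its closed irreducible equivalence relation $R_\pi$. Making the identification $|\DD|_{\rm nqm}\cong\{(A,R)\st R\in{\rm IRel}(T(A))\}$ explicit and arguing fibrewise is only an organizational variation of the paper's direct verification that $F$ and $G$ are mutually inverse.
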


\begin{proof}
Let $(A,C)\in|\Fed|$. Then, by Lemma \ref{lem2}, $\pi_{(A,C)}\in\PP$ and $(A,\pi_{(A,C)})\in|\DD|$. This makes the correspondence $F$ well-defined.
Now, with the notation of Lemma \ref{lem1}, we consider $$G: |\DD|_{\rm nqm}\lra |\Fed|,\ \ (A,\pi)\mapsto (A,C_{(A,\pi)}).$$
Clearly, Lemma \ref{lem1} confirms that $G$ is well-defined. We show that $F$ and $G$ are inverse to each other. 

For $(A,C)\in|\Fed|$ one has $G(F(A,C))=G(A,\pi_{(A,C)})=(A,C_{(A,\pi_{(A,C)})})$. By Proposition \ref{mainpro}, $C=g(f(C))=C_{(A,\pi_{R_{(A,C)}})}$ follows.
Since $\pi_{(A,C)}=\pi_{R_{(A,C)}}$ (see the proof of Lemma \ref{lem2}), we obtain $G(F(A,C))=(A,C)$.

 For $(A,\pi)\in|\DD|_{\rm nqm}$ one has $F(G(A,\pi))=F(A,C_{(A,\pi)})=(A,\pi_{(A,C_{(A,\pi)})})$. Denote by $R_\pi$  the relation on $T(A)$ determined by the fibers of $\pi$; then $R_\pi\in IRel(T(A))$. Using once more Proposition \ref{mainpro}, we obtain $R_\pi=f(g(R_\pi))=R_{(A,C_{(A,\pi_{(R_\pi)})})}$. Since $\pi_{(A,C_{(A,\pi)})}=\pi_{R_{(A,C_{(A,\pi)})}}$ and $\pi=\pi_{(R_\pi)}$ (because $\pi$ is a natural quotient map), we obtain $F(G(A,\pi))=(A,\pi)$.
\end{proof}

Note that Lemmas \ref{lem1} and \ref{lem2}, Proposition \ref{mainpro} and Corollary \ref{cor1f} reveal the topological nature of CNCAs, {\em i.e.}, of the objects of the category $\Fed$. Proposition \ref{mainpro} implies also Bezhanishvili's Theorem \cite[Theorem 8.1]{Bezh} mentioned in the Introduction: for any complete Boolean algebra $B$ there is a bijection between the set of all normal
contact relations on $B$ and the set of all (up to homeomorphism) Hausdorff irreducible images of the Stone dual $S^a(B)$ of $B$. In \cite{Bezh} this result is obtained with the help of de Vries' Duality Theorem, while our proof is direct and therefore topologically more informative.

 \begin{pro}\label{morphpro}
For objects $(A,C),(A\ap,C\ap)$ in $\Fed$, a Boolean homomorphism $\psi:A\lra A\ap$ satisfies condition {\em (F)} of {\em \ref{dcompn}} if, and only if,  $u\ap R_{(A\ap,C\ap)}v\ap$ implies $T(\psi)(u\ap)R_{(A,C)}T(\psi)(v\ap)$, for all  $u\ap,v\ap\in T(A\ap)$.
\end{pro}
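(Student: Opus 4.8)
The plan is to unwind both conditions to their definitional meaning and then to invoke Proposition~\ref{relrult}(a), which realises the contact relation of a normal contact algebra through pairs of ultrafilters related by $R_{(\cdot,\cdot)}$. Two preliminary observations are needed. First, since $T=S^a\upharpoonright_{\AA}$, for every ultrafilter $w$ in $A\ap$ one has $T(\psi)(w)=\psi\inv(w)$, and $\psi\inv(w)$ is again an ultrafilter in $A$ because $\psi$ is a Boolean homomorphism. Second, by Definition~\ref{relr}, for filters $f,g$ the statement $f\,R_{(A,C)}\,g$ means exactly that $aCb$ holds for all $a\in f$ and $b\in g$. With these translations, the condition ``$u\ap R_{(A\ap,C\ap)}v\ap$ implies $T(\psi)(u\ap)R_{(A,C)}T(\psi)(v\ap)$'' says precisely: whenever $\psi(a)C\ap\psi(b)$ holds for all $a$ with $\psi(a)\in u\ap$ and all $b$ with $\psi(b)\in v\ap$, then $aCb$ holds for all such $a,b$.

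For the ``only if'' direction, suppose $\psi$ satisfies (F) and let $u\ap,v\ap\in T(A\ap)$ with $u\ap R_{(A\ap,C\ap)}v\ap$. Take arbitrary $a\in\psi\inv(u\ap)$ and $b\in\psi\inv(v\ap)$, so $\psi(a)\in u\ap$ and $\psi(b)\in v\ap$; then $\psi(a)C\ap\psi(b)$ by the hypothesis on $u\ap,v\ap$, and (F) gives $aCb$. Since $a,b$ are arbitrary, $\psi\inv(u\ap)\times\psi\inv(v\ap)\sbe C$, i.e. $T(\psi)(u\ap)R_{(A,C)}T(\psi)(v\ap)$, as required.

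For the ``if'' direction, assume the implication on ultrafilters and suppose $\psi(a)C\ap\psi(b)$ for some $a,b\in A$. Applying Proposition~\ref{relrult}(a) to the normal contact algebra $(A\ap,C\ap)$, there exist ultrafilters $u\ap,v\ap$ in $A\ap$ with $\psi(a)\in u\ap$, $\psi(b)\in v\ap$, and $u\ap R_{(A\ap,C\ap)}v\ap$. The hypothesis then yields $\psi\inv(u\ap)R_{(A,C)}\psi\inv(v\ap)$; since $a\in\psi\inv(u\ap)$ and $b\in\psi\inv(v\ap)$, the definition of $R_{(A,C)}$ gives $aCb$. Hence (F) (equivalently, (F$'$)) holds.

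I do not expect any genuine obstacle here: the sole non-formal ingredient is Proposition~\ref{relrult}(a), needed in the ``if'' direction to produce the two $R_{(A\ap,C\ap)}$-related ultrafilters witnessing $\psi(a)C\ap\psi(b)$, while the ``only if'' direction is an immediate unravelling of the definition of $R$. The only point deserving a moment's care is the bookkeeping identification $T(\psi)(w)=\psi\inv(w)$ together with the fact that preimages of ultrafilters under the Boolean homomorphism $\psi$ are ultrafilters, which is what makes the $R$-condition in the statement literally the one to be used.
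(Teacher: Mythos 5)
Your proof is correct, and the backward direction coincides with the paper's: both argue from $\psi(a)C\ap\psi(b)$ via Proposition \ref{relrult}(a) to a pair of $R_{(A\ap,C\ap)}$-related ultrafilters containing $\psi(a)$ and $\psi(b)$, pull them back along $\psi\inv=T(\psi)$, and read off $aCb$ from the definition of $R_{(A,C)}$ (the paper phrases this as a contradiction, you phrase it directly; the content is identical). Where you genuinely diverge is the forward direction: you simply unwind Definition \ref{relr} --- given $a\in\psi\inv(u\ap)$, $b\in\psi\inv(v\ap)$, the hypothesis $u\ap\times v\ap\sbe C\ap$ gives $\psi(a)C\ap\psi(b)$, and (F) gives $aCb$ --- which is a two-line, purely order-theoretic argument needing nothing beyond the definition of $R$ and the identification $T(\psi)(w)=\psi\inv(w)$. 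The paper instead argues by contradiction through the topological machinery of Lemma \ref{lem2}: it passes to the quotient maps $\pi_{(A,C)}$, $\pi_{(A\ap,C\ap)}$, uses Hausdorffness of $T(A)/R_{(A,C)}$ to separate $\pi(u)$ and $\pi(v)$ by basic sets $s_A(a)$, $s_A(b)$, and translates disjointness of the images back into $a(-C)b$. Your route is more elementary; the paper's buys the topological reading it is after (that $T(\psi)$ maps fibres of $\pi_{(A\ap,C\ap)}$ into fibres of $\pi_{(A,C)}$, i.e., descends to the quotients), in line with its stated aim of exposing the topological nature of the $\Fed$-morphisms, but logically your argument suffices. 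One small caveat: your preliminary ``translation'' of the ultrafilter condition (``says precisely: whenever $\psi(a)C\ap\psi(b)$ holds for all $a$ with $\psi(a)\in u\ap$ \dots'') is not an exact equivalence, since $u\ap R_{(A\ap,C\ap)}v\ap$ demands contact for \emph{all} pairs in $u\ap\times v\ap$, not only those in the image of $\psi$; this is harmless, as your two actual arguments use only the correct definitions, but the phrase ``precisely'' should be dropped or weakened.
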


\begin{proof}
($\Rightarrow$) \ Let $\psi$ satisfy condition (F) and $u\ap,v\ap\in T(A\ap)$ be such that $u\ap R_{(A\ap,C\ap)}v\ap$.
Suppose that $T(\psi)(u\ap)(-R_{(A,C)})T(\psi)(u\ap)$. Then, for $\pi\df \pi_{(A,C)}$ and  $\pi\ap\df \pi_{(A\ap,C\ap)}$ (see Lemma \ref{lem2} for notation),
we have $\pi\ap(u\ap)=\pi\ap(v\ap)$ and
$\pi(T(\psi)(u\ap))\neq\pi(T(\psi)(v\ap))$. Putting $u\df T(\psi)(u\ap)$ and
$v\df T(\psi)(v\ap)$ we obtain $u=\psi\inv(u\ap)$, $v=\psi\inv(v\ap)$ and
$\pi(u)\neq\pi(v)$.  Since, by Lemma \ref{lem2}, the space $X\df T(A)/R_{(A,C)}$ is Hausdorff, the points
$\pi(u)$ and $\pi(v)$ have disjoint neighborhoods $U$ and $V$,
where $\pi(u)\in U$ and $\pi(v)\in V$. Then there exist $a,b\in A$
such that $u\in s_A(a)\sbe \pi\inv(U)$ and $v\in s_A(b)\sbe
\pi\inv(V)$. Thus $\pi(s_A(a))\cap\pi(s_A(b))=\ems$, {\em i.e.},
$a(-C)b$. Since $\psi$ satisfies condition (F), we obtain
$\psi(a)(-C\ap)\psi(b)$, which means $\pi\ap(s_{A\ap}(\psi(a)))\cap
\pi\ap(s_{A\ap}(\psi(b)))=\ems$. This, however, is impossible,
because $\pi\ap(u\ap)=\pi\ap(v\ap)$, $u\ap\in s_{A\ap}(\psi(a))$
and $v\ap\in s_{A\ap}(\psi(b))$.
Hence, $T(\psi)(u\ap)R_{(A,C)}T(\psi)(u\ap)$.

\medskip

\noindent($\Leftarrow$) Assuming that $\psi$ does not satisfy condition (F), we obtain $a,b\in A$ such that $\psi(a)C\ap\psi(b)$ but $a(-C)b$.
By Proposition \ref{relrult}(a), there exist $u\ap,v\ap\in T(A\ap)$ such that $u\ap R_{(A\ap,C\ap)}v\ap$, $\psi(a)\in u\ap$ and $\psi(b)\in v\ap$.
Then $a\in\psi\inv(u\ap)$ and $b\in\psi\inv(v\ap)$, {\em i.e.}, $a\in T(\psi)(u\ap)$ and $b\in T(\psi)(v\ap)$. By our hypothesis, we have $T(\psi)(u\ap)R_{(A,C)}T(\psi)(u\ap)$. Thus $aCb$, a contradiction. Therefore, $\psi$ satisfies condition (F).
\end{proof}

Note that Proposition \ref{morphpro} reveals the topological nature of the morphisms of the category $\Fed$.

\begin{theorem}\label{thnewfed}
The categories $\Fed$ and $\DD$ are equivalent.
\end{theorem}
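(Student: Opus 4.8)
The plan is to construct a functor $\Phi:\Fed\lra\DD$ and to show that it is full, faithful and essentially surjective. On objects, $\Phi$ will be the bijection $F:|\Fed|\lra|\DD|_{\rm nqm}$ of Corollary \ref{cor1f}, that is, $\Phi(A,C)\df(A,\pi_{(A,C)})$. On morphisms, given $\psi:(A,C)\lra(A\ap,C\ap)$ in $\Fed$ (so that $\psi:A\lra A\ap$ is a suprema-preserving Boolean homomorphism satisfying (F)), Proposition \ref{morphpro} says precisely that $T\psi=S^a(\psi):T(A\ap)\lra T(A)$ sends $R_{(A\ap,C\ap)}$-related ultrafilters to $R_{(A,C)}$-related ones; hence $T\psi$ descends to a unique continuous map $f_\psi:T(A\ap)/R_{(A\ap,C\ap)}\lra T(A)/R_{(A,C)}$ with $\pi_{(A,C)}\circ T\psi=f_\psi\circ\pi_{(A\ap,C\ap)}$. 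Since $\psi$ is suprema-preserving, $T\psi$ is quasi-open by the Theorem in \ref{qzhcth}, and $\pi_{(A,C)}$ is quasi-open by Lemma \ref{lem2} together with \ref{skelnewcor}; thus $f_\psi\circ\pi_{(A\ap,C\ap)}$ is quasi-open, and, as $\pi_{(A\ap,C\ap)}$ is a continuous surjection, Lemma \ref{qopl} shows that $f_\psi$ is quasi-open, i.e.\ a $\CC$-morphism. We set $\Phi(\psi)\df(\psi,f_\psi)$; the identity $T\psi=\widehat{f_\psi}$ needed for $(\psi,f_\psi)$ to be a $\DD$-morphism is forced by its defining square together with the uniqueness in (P5$^*$). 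Preservation of identities and of composition by $\Phi$ is then a routine check, using the contravariance of $T$, the uniqueness of descended maps, and Fact \ref{dvmrem}(d) (composition in $\Fed$ is ordinary composition of maps).

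Faithfulness of $\Phi$ is immediate, since $\Phi(\psi)=(\psi,f_\psi)$ retains $\psi$. For fullness, let $(\varphi,f):\Phi(A,C)\lra\Phi(A\ap,C\ap)$ be an arbitrary $\DD$-morphism; then $\varphi:A\lra A\ap$ is a suprema-preserving Boolean homomorphism and $\pi_{(A,C)}\circ T\varphi=f\circ\pi_{(A\ap,C\ap)}$. Evaluating this equality at ultrafilters shows that $T\varphi$ carries $R_{(A\ap,C\ap)}$-related ultrafilters to $R_{(A,C)}$-related ones, so by Proposition \ref{morphpro} $\varphi$ satisfies (F), i.e.\ $\varphi\in\Fed((A,C),(A\ap,C\ap))$; moreover $f$ is the unique continuous map through which $\pi_{(A,C)}\circ T\varphi$ factors over the surjection $\pi_{(A\ap,C\ap)}$, so $f=f_\varphi$ and $(\varphi,f)=\Phi(\varphi)$.

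It remains to prove essential surjectivity. As $\Phi$ acts on objects as $F$, whose image is $|\DD|_{\rm nqm}$, it is enough to show that every $\DD$-object is $\DD$-isomorphic to one in $|\DD|_{\rm nqm}$. Let $(A,p)\in|\DD|$ with $p:T(A)\lra X$ in $\PP$. Since $T(A)$ is compact and $X$ is Hausdorff, $p$ is a closed irreducible surjection, hence factors as $p=h\circ\pi_{R_p}$, where $R_p$ is the equivalence relation on $T(A)$ determined by the fibres of $p$ and $h:T(A)/R_p\lra X$ is a homeomorphism (a continuous bijection from a compact space onto a Hausdorff one). The map $\pi_{R_p}$ is again irreducible, being a factor of the irreducible map $p$ through the homeomorphism $h$; it has domain $T(A)\in|\BB|$ and codomain homeomorphic to $X$, so $\pi_{R_p}\in\PP$ and $(A,\pi_{R_p})\in|\DD|_{\rm nqm}$. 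Finally, $(1_A,h):(A,p)\lra(A,\pi_{R_p})$ is a $\DD$-morphism --- the required condition $\widehat{h}=1_{T(A)}=T(1_A)$ holds by the uniqueness in (P5$^*$), since $1_{T(A)}$ satisfies $p\circ 1_{T(A)}=h\circ\pi_{R_p}$ --- and it is an isomorphism, with inverse $(1_A,h^{-1})$. Hence $\Phi$ is essentially surjective, and therefore an equivalence. The genuinely non-formal ingredients are Propositions \ref{morphpro} and \ref{mainpro}, both already established; the point to keep in mind --- and the only real obstacle --- is that a $\DD$-object carries an arbitrary $\PP$-structure map, not necessarily a natural quotient map, which is why the essential surjectivity of $\Phi$ must be routed through the canonical factorization $p=h\circ\pi_{R_p}$ rather than read off directly from Corollary \ref{cor1f}.
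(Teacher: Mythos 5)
Your proposal is correct and follows essentially the same route as the paper: the functor is defined on objects via Corollary \ref{cor1f}, on morphisms by descending $T\psi$ through the natural quotients using Proposition \ref{morphpro}, with fullness read off from the surjectivity of the quotient maps and essential surjectivity obtained by factoring an arbitrary $p\in\PP$ as a homeomorphism after its natural quotient map. The only (harmless) deviations are that you deduce quasi-openness of $f_\psi$ from Lemma \ref{qopl} rather than via Ponomarev's small image $\pi^{\#}$, and that in the fullness argument you verify explicitly, via the converse direction of Proposition \ref{morphpro}, that the first component of a $\DD$-morphism satisfies condition (F) --- a point the paper passes over silently.
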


\begin{proof}
In Corollary \ref{cor1f} we defined a correspondence
 $F:|\Fed|\lra |\DD|$ by setting $F(A,C)\df (A,\pi_{(A,C)})$ for all $(A,C)\in|\Fed|$.
Now we extend this correspondence to morphisms of $\DD$ to obtain a
  functor $$F:\Fed\lra \DD.$$
Let $\a\in\Fed((A,C),(A\ap,C\ap))$. Then $\a\in\AA(A,A\ap)$, and
$F(A,C)= (A,\pi_{(A,C)})$,\;  $F(A\ap,C\ap)= (A\ap,\pi_{(A\ap,C\ap)})$.
For $\pi\df \pi_{(A,C)}$, $\pi\ap\df \pi_{(A\ap,C\ap)}$, $X\df \cod(\pi)$ and $X\ap\df\cod(\pi\ap)$,  we will define $f\in\CC(X\ap,X)$
such that $f\circ\pi\ap=\pi\circ T(\a)$.

Since $\a$ satisfies condition (F), using Proposition \ref{morphpro}, we obtain that, if  $u\ap,v\ap\in T(A\ap)$ and $\pi\ap(u\ap)=\pi\ap(v\ap)$, then
\begin{equation}\label{comf}
\pi(T(\a)(u\ap))=\pi(T(\a)(v\ap)).
\end{equation}

\noindent To define $f$, since $\pi\ap$ is a surjection, given $x\ap\in X\ap$, one has some $u\ap\in T(A\ap)$ such that
$x\ap=\pi\ap(u\ap)$, and with (\ref{comf}) we can put $$f(x\ap)\df \pi(T(\a)(u\ap)).$$
Since $\pi\ap$ is a
quotient mapping, we obtain that $f:X\ap\lra X$ is continuous. We
claim that $f$
 is quasi-open. Indeed, for $U$ a non-empty open subset of $X\ap$, using the fact that $\pi\ap$ is a
 surjection and $T(\a)$ is open, we obtain that $V\df T(\a)((\pi\ap)\inv(U))$ is a non-empty open
 subset of $T(A)$. Thus, by (\ref{Ponomarev}), $W\df \pi^\# (V)$ is a non-empty open subset of $X$.
 Since $W\sbe \pi(V)=\pi(T(\a)((\pi\ap)\inv(U)))=f(\pi\ap((\pi\ap)\inv(U)))=f(U)$, we obtain that $f$ is a
 quasi-open. Hence, $f\in\CC(X\ap,X)$ and
$f\circ\pi\ap=\pi\circ T(\a)$. Therefore, $(\a,f)\in\DD(F(A,C),F(A\ap,C\ap))$.
Since the function $f$ is uniquely determined by $\a$, we denote it by $f_\a$
and set
\begin{equation}\label{falpha}
F(\a)\df (\a,f_\a).
\end{equation}
It is easy to see that $F$ has indeed become a faithful functor $\Fed\lra\DD$.

Next we prove that $F$ is full.
For $(A,C),(A\ap,C\ap)\in|\Fed|$, with $D\df F(A,C)$ and $D\ap\df F(A\ap,C\ap)$,
we consider $(\a,f)\in \DD(D,D\ap)$ and must show
\begin{equation}\label{ffalf}
f=f_\a.
\end{equation}
But since $f\circ\pi\ap=\pi\circ T(\a)=f_\a\circ\pi\ap$ and $\pi\ap$ is a surjection, (\ref{ffalf}) follows trivially.

Finally we confirm that $F$ is essentially surjective on objects.
Given $(A,\pi)\in|\DD|$ we know that $\pi:T(A)\lra X$ lies in $\PP$ and is, in particular, a quotient map. For the natural quotient map $\pi\ap:T(A)\lra T(A)/R_\pi$ we define $h:T(A)/R_\pi\lra X$ by $h(\pi\inv(x))\df x$, for all $x\in X$. Then $\pi=h\circ \pi\ap$ and, as it is well known, $h$ is a homeomorphism. Clearly, $\pi\ap$ is an irreducible mapping. Thus $(A,\pi\ap)\in|\DD|_{\rm nqm}$. With
 Corollary \ref{cor1f}, we obtain that there exists $(A,C)\in|\Fed|$ such that $F(A,C)=(A,\pi\ap)$. Obviously, $(A,\pi\ap)$ and $(A,\pi)$ are $\DD$-isomorphic.

In summary, the functor $F$ is an equivalence of categories.
\end{proof}

\begin{cor}\label{coriso}
The full subcategory $\DD_{\rm nqm}$ of the category $\DD$ with $|\DD_{\rm nmq}|\df |\DD|_{\rm nqm}$ is isomorphic to the category $\Fed$.
\end{cor}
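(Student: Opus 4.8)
The plan is to show that the equivalence functor $F\colon\Fed\lra\DD$ constructed in Theorem \ref{thnewfed} corestricts to an \emph{isomorphism} of categories once we pass to the full subcategory $\DD_{\rm nqm}$. First I would observe that $F$ already takes its values in $\DD_{\rm nqm}$: by definition $F(A,C)=(A,\pi_{(A,C)})$, and $\pi_{(A,C)}$ is, by construction in Lemma \ref{lem2}, the natural quotient mapping $T(A)\lra T(A)/R_{(A,C)}$, so $(A,\pi_{(A,C)})\in|\DD|_{\rm nqm}=|\DD_{\rm nqm}|$. Since $\DD_{\rm nqm}$ is a full subcategory of $\DD$, the morphism assignments of $F$ restrict verbatim, yielding a functor $F_0\colon\Fed\lra\DD_{\rm nqm}$ with $F=I'\circ F_0$, where $I'\colon\DD_{\rm nqm}\hra\DD$ is the inclusion.

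Next I would check that $F_0$ is full and faithful. Faithfulness is inherited at once from the faithfulness of $F$ established in Theorem \ref{thnewfed}. For fullness, given $(A,C),(A\ap,C\ap)\in|\Fed|$ and a $\DD_{\rm nqm}$-morphism $(\a,f)\colon F_0(A,C)\lra F_0(A\ap,C\ap)$, this is in particular a $\DD$-morphism between the objects $F(A,C)$ and $F(A\ap,C\ap)$, by fullness of $\DD_{\rm nqm}$ in $\DD$; since $F$ is full, $(\a,f)=F(\a\ap)$ for some $\a\ap\in\Fed((A,C),(A\ap,C\ap))$, and hence $(\a,f)=F_0(\a\ap)$.

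Finally, Corollary \ref{cor1f} states precisely that the object assignment $(A,C)\mapsto(A,\pi_{(A,C)})$ is a bijection $|\Fed|\lra|\DD|_{\rm nqm}$, so $F_0$ is bijective on objects. A functor that is full, faithful, and bijective on objects is an isomorphism of categories: one defines the inverse $G_0\colon\DD_{\rm nqm}\lra\Fed$ to be, on objects, the inverse of the object-bijection, and, on each hom-set, the inverse of the bijection $F_0\colon\Fed(X,Y)\lra\DD_{\rm nqm}(F_0X,F_0Y)$; functoriality of $G_0$ together with the identities $G_0F_0={\sf Id}_{\Fed}$ and $F_0G_0={\sf Id}_{\DD_{\rm nqm}}$ is then immediate. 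Thus $\Fed\cong\DD_{\rm nqm}$. There is no genuine obstacle here — the corollary is a repackaging of Theorem \ref{thnewfed} and Corollary \ref{cor1f}; the only point worth emphasizing is that $F$ is genuinely \emph{injective} (not merely essentially surjective) on the objects of $\DD_{\rm nqm}$, which is exactly what upgrades the equivalence of Theorem \ref{thnewfed} to an isomorphism on this subcategory.
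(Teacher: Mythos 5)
Your argument is correct and is essentially the paper's own: both proofs corestrict $F$ to $\DD_{\rm nqm}$, use Corollary \ref{cor1f} for bijectivity on objects, and rely on the morphism-level analysis of Theorem \ref{thnewfed}. The only cosmetic difference is that the paper exhibits the inverse functor explicitly, via $(A,\pi)\mapsto(A,C_{(A,\pi)})$ on objects and $(\p,f)\mapsto\p$ on morphisms, whereas you invoke the standard fact that a full, faithful functor bijective on objects is an isomorphism — and your hom-set inverses are exactly the paper's assignment $(\p,f)\mapsto\p$, since $F(\p)=(\p,f_\p)$.
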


\begin{proof}
The proof of Theorem \ref{thnewfed} shows that the functor $F:\Fed\lra\DD$ defined there actually takes values in $\DD_{\rm nmq}$.
Its restriction $F_1\df F\upharpoonright\Fed:\Fed\lra\DD_{\rm nmq}$ has an inverse, given
(in the notation of Lemma \ref{lem1}) by the assignments
$$(A,\pi)\mapsto (A,C_{(A,\pi)}) \mbox{ and }(\p,f)\mapsto \p.$$
Using Corollary \ref{cor1f}, one easily sees that these assignments are inverse to those of $F_1$, thus making $F_1$ bijective.
\end{proof}

\begin{nist}\label{n2n}
\rm

If $\a:A\lra B$ is a suprema-preserving Boolean homomorphism, then
$\a\in\Fed((A,\rho_s^A),(B,\rho_s^B))$ because in this case
condition (F)
is automatically fulfilled. Hence, we may define the functor $$J\ap:\AA\lra\Fed$$
by $J\ap(A)=(A,\rho_s^A)$ for every $A\in|\AA|$, and by
$J\ap(\a)=\a$ for every $\a\in\AA(A,B)$, which embeds $\AA$ fully into $\Fed$ and, obviously, satisfies  $J=F\circ J\ap$ (with $F$ as in the proof of Theorem \ref{thnewfed}).
Its image $\AA'$ is then a full subcategory of $\Fed$ isomorphic to $\AA$.
Setting $\Psi\df \hat{T}\circ F$ 
we obtain,
 using the definition of the contravariant functor $\hat{T}$ given in Proposition \ref{mere duality},
 that $$\Psi:\Fed\lra\CC$$ is a dual equivalence,
 with $\Psi(A,C)=T(A)/R_{(A,C)}$ for every $(A,C)\in|\Fed|$, $\Psi(\p)=f_\p$ for every $\p\in \Mor(\Fed)$ (see (\ref{falpha}) for the notation $f_\p$)
  and  $I\circ T=\Psi\circ J\ap.$
\end{nist}

\begin{pro}\label{prff}
For all $(A,C)\in|\Fed|$, the spaces  $\Phi^a(A,C)$ and $\Psi(A,C)$ are homeomorphic.
\end{pro}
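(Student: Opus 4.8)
The plan is to realize de Vries' cluster space $\Phi^a(A,C)=\Psi^a(A,C)=(\CL(A,C),\TT)$ as the quotient of the Stone space $T(A)=S^a(A)=\ult(A)$ by the relation $R_{(A,C)}$, i.e.\ as $\Psi(A,C)=T(A)/R_{(A,C)}$ (see \ref{n2n}). The comparison map will be
$$q\colon\ult(A)\lra\CL(A,C),\qquad u\longmapsto\s_u,$$
where $\s_u=\{a\in A\st aCb\text{ for all }b\in u\}$ is, by Corollary \ref{uniqult}, the unique cluster of $(A,C)$ containing $u$; by Theorem \ref{conclustth}, $q$ is surjective. So $q$ is at least a well-defined surjection of sets, and the work is to identify its fibres and then compare topologies.

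First I would check that the fibres of $q$ are exactly the $R_{(A,C)}$-classes, i.e.\ that $\s_u=\s_v\iff uR_{(A,C)}v$ for $u,v\in\ult(A)$. If $uR_{(A,C)}v$, then $u\times v\sbe C$, so every $b\in v$ satisfies $aCb$ for all $a\in u$, hence $v\sbe\s_u$; since $\s_v$ is the unique cluster containing $v$ (Corollary \ref{uniqult}) while $\s_u$ is a cluster containing $v$, we get $\s_u=\s_v$. Conversely, $\s_u=\s_v$ gives $v\sbe\s_v=\s_u$, hence $aCb$ for all $a\in u$ and $b\in v$, that is $uR_{(A,C)}v$. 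Consequently $q$ factors through the quotient map $T(A)\to T(A)/R_{(A,C)}$ as a bijection $\bar q\colon T(A)/R_{(A,C)}\lra\CL(A,C)$.

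It then remains to verify that $\bar q$ is a homeomorphism. Since $T(A)/R_{(A,C)}=\Psi(A,C)$ is compact (a quotient of the compact space $\ult(A)$; cf.\ Lemma \ref{lem2}) and $\CL(A,C)=\Psi^a(A,C)$ is compact Hausdorff (de Vries \cite{deV}; see \ref{dval}), it suffices to show that $\bar q$, equivalently $q$, is continuous. As $\{\ups_{(A,C)}(a)\st a\in A\}$ is a base of closed sets for $\TT$, I only need each
$$q\inv(\ups_{(A,C)}(a))=\{u\in\ult(A)\st a\in\s_u\}=\{u\in\ult(A)\st aCb\text{ for all }b\in u\}$$
to be closed in $\ult(A)$; and indeed its complement is $\bigcup\{s_A(b)\st b\in A,\ a(-C)b\}$, a union of basic open subsets of the Stone space, hence open. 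Thus $\bar q$ is a continuous bijection from a compact space onto a compact Hausdorff space, and therefore a homeomorphism.

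No genuine obstacle is expected here: the only step that takes a moment's care is the equivalence $\s_u=\s_v\iff uR_{(A,C)}v$, which hinges on the uniqueness clause in Corollary \ref{uniqult} (equivalently Fact \ref{fact29}); everything else is a direct unravelling of the definitions of $\Psi$, $\Psi^a$ and the Stone topology. The content of the proposition is precisely the observation that de Vries' space of clusters of $(A,C)$ is, as a topological space, nothing but the Stone dual of $A$ factored by $R_{(A,C)}$.
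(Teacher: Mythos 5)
Your proof is correct, and its set-theoretic core is exactly the paper's: the paper's map $h$ sends $[u]\mapsto\s_u$, which is precisely your induced bijection $\bar q$, and both arguments identify the fibres of $u\mapsto\s_u$ with the $R_{(A,C)}$-classes via Theorem \ref{conclustth}, Corollary \ref{uniqult} and Fact \ref{fact29}. Where you genuinely diverge is in the topological step. The paper never invokes the Hausdorffness of the cluster space $(\CL(A,C),\TT)$: using Lemma \ref{lem2} (the quotient map $\pi$ is closed and irreducible) together with Theorem \ref{thalpon}, it observes that $\RC(\Psi(A,C))=\{\pi(s_A(a))\st a\in A\}$ and that $h$ carries this closed base bijectively onto the defining closed base $\{\ups_{(A,C)}(a)\st a\in A\}$ of $\TT$, which gives bicontinuity directly and keeps the argument self-contained in the spirit of Section 5 (no appeal to de Vries' theorem). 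You instead prove only one-sided continuity of $q$ — a nice, very direct computation, since $q\inv(\ups_{(A,C)}(a))$ has open complement $\bigcup\{s_A(b)\st a(-C)b\}$, and this avoids Theorem \ref{thalpon} altogether — but then you must import from de Vries the fact that $(\CL(A,C),\TT)$ is compact Hausdorff to upgrade the continuous bijection to a homeomorphism; that step is genuinely needed (a continuous bijection between compact spaces need not be a homeomorphism without Hausdorffness of the codomain), and it is the one external ingredient your route uses that the paper's route deliberately avoids. So: your argument is valid as a proof relying on the recalled de Vries material, while the paper's version trades your continuity computation for the closed-base correspondence in order to stay independent of de Vries' duality.
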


\begin{proof}
For $(A,C)\in|\Fed|$ we
set
$h([u])\df \bigcup\{v\sbe A\st v\in T(A), v\in[u]\}$, for all $u\in T(A)$.  Then  $$h([u])= \bigcup\{v\sbe A\st v\in T(A), vR_{(A,C)}u\}=\{a\in A\st aCb \;{\rm for\; all}\; b\in u\}.$$ Indeed, the middle set is obviously contained in the set on the right, and the reversed inclusion follows from Theorem \ref{conclustth}. Using again Theorem \ref{conclustth}, we obtain that $h([u])$ is a cluster in $(A,C)$. Hence, the function
 $$h:\Psi(A,C)\lra \Phi^a(A,C)$$
 is well-defined, and Theorem \ref{conclustth} shows that $h$ is surjective. Also, in the notation of Corollary \ref{uniqult}, we have $h([u])=\s_u$, for all $u\in T(A)$.
 For showing the injectivity of $h$, we let $u,v\in T(A)$ with $[u]\neq[v]$, but suppose that $h([u])=h([v])$, so that $u\cup v\sbe h([u])$. This means $uR_{(A,C)}v$ and, thus, $[u]=[v]$, a contradiction. So, $h$ is a bijection.
 Setting $X\df \Psi(A,C)$, we let $\pi:T(A)\lra X$ be the natural quotient mapping. Then Lemma \ref{lem2} shows that $\pi$ is a closed irreducible mapping. Hence, by Theorem \ref{thalpon}, the map $\p_\pi:\CO(T(A))\lra \RC(X),\   H\mapsto \pi(H),$ is a Boolean isomorphism. Therefore, $\RC(X)=\{\pi(s_A(a))\st a\in A\}$. For every $a\in A$, we have that $\pi(s_A(a))=\{[u]\st a\in u\}$. Thus
$h(\pi(s_A(a)))=\{h([u])\st a\in u\}=\{h([u])\st a\in h([u])\}=\ups_{(A,C)}(a)$. Hence, $h$ is a homeomorphism.
\end{proof}

Note that Proposition \ref{prff} clarifies the definition of the contravariant functor $\Phi^a$ on objects. Next we compare the definitions of the contravariant functors $\Phi^a$ and $\Psi$ on the morphisms of the category $\Fed$.
\begin{nist}\label{fedmorphpsi}
\rm
For $\p\in\Fed((A,C),(A\ap,C\ap))$, we set $\pi\df\pi_{(A,C)},\;\pi\ap\df\pi_{(A\ap,C\ap)}$ (in the notation of Lemma \ref{lem2}) and obtain $\pi:T(A)\lra\Psi(A,C)$ and $\pi\ap:T(A\ap)\lra\Psi(A\ap,C\ap)$.
We then have $\Psi(\p)=f_\p=\pi\circ T(\p)\circ (\pi\ap)\inv$. With the homeomorphisms $h:\Psi(A,C)\lra\Phi^a(A,C)$ and $h\ap:\Psi(A\ap,C\ap)\lra\Phi^a(A\ap,C\ap)$ of Proposition \ref{prff} we can confirm the following assertion:

\medskip

\noindent{\bf Proposition.}\ \ $\Phi^a(\p)\circ h\ap=h\circ \Psi(\p)$.

\begin{proof}
Indeed, for all $u\ap\in T(A\ap)$,  one has
\medskip

\begin{tabular}{ll}
$(h\circ \Psi(\p))([u\ap])$&$=(h\circ f_\p)([u\ap])=(h\circ \pi\circ T(\p)\circ (\pi\ap)\inv)([u\ap])$\\
&$=h(\pi(T(\p)(u\ap)))=h([\p\inv(u\ap)])=\s_{\p\inv(u\ap)}.$\\
 \end{tabular}

\medskip
\noindent Also, $(\Phi^a(\p)\circ h\ap)([u\ap])=\Phi^a(\p)(\s_{u\ap})=\s_{\p\inv(u\ap)}$, so that $\Phi^a(\p)\circ h\ap=h\circ f_\p$ follows.
\end{proof}
\end{nist}


\begin{nist}\label{dprimdeq}
\rm
Being an equivalence of categories, the functor $F:\Fed\lra \DD$ of Theorem \ref{thnewfed} has an adjoint, both of whose composites with $F$ are naturally isomorphic to the corresponding identity functors. Since such adjoint functor is determined uniquely by $F$ only up to natural isomorphism, there is value in exhibiting a specific adjoint $F\ap:\DD\lra\Fed$ to $F$. We use the following notation: for a continuous surjection $f:X\lra Y$, we denote by $R_f$ the equivalence relation on $X$ determined by the fibres of $f$, by $q_f:X\lra X/R_f$ the natural quotient mapping, and by $h_f:X/R_f\lra Y$ the map with $h_f\circ q_f=f$. 
  We let $F_2:\DD_{\rm nqm}\hookrightarrow\DD$ be the inclusion functor, and the functor $F_2\ap:\DD\lra\DD_{\rm nqm}$ is defined by $F_2\ap(A,p)\df (A,q_p)$ for every $(A,p)\in|\DD|$, and $F_2\ap(\p,f)\df (\p,(h_p)\inv\circ f\circ h_{p\ap})$ for every $(\p,f)\in\DD((A,p),(A\ap,p\ap))$. 

\bigskip

\noindent{\bf Proposition.}\  $F_2\ap\circ F_2={\sf Id}_{\DD_{\rm nqm}}$ {\em and} $F_2\circ F_2\ap\simeq {\sf Id}_{\DD}$.

\begin{proof}
It is easy to see that $F_2\ap$ is a well-defined functor and that $F_2\ap\circ F_2={\sf Id}_{\DD_{\rm nqm}}$. Further, for every $(A,p)\in|\DD|$, we have $F_2(F_2\ap(A,p))=(A,q_p)$, and for every $(\p,f)\in\DD((A,p),(A\ap,p\ap))$, $F_2(F_2\ap(\p,f))=(\p,(h_p)\inv\circ f\circ h_{p\ap})$. Then, obviously, $(1_A,h_p):(A,p)\lra F_2(F_2\ap((A,p)))$ is a $\DD$-isomorphism and $(1_{A\ap},h_{p\ap})\circ(\p,f)=F_2(F_2\ap(\p,f))\circ(1_A,h_p)$. Therefore, $F_2\circ F_2\ap\simeq {\sf Id}_{\DD}$.
\end{proof}

In the notation of Corollary \ref{coriso}, one has $F=F_2\circ F_1$. Then, with $F\ap\df F_1\ap\circ F_2\ap$,  we obtain
$$F\ap:\DD\lra\Fed,\ \ F\circ F\ap\simeq {\sf Id}_{\DD}, \mbox{ and }\ F\ap\circ F\simeq {\sf Id}_{\Fed}.$$
One also easily confirms:

\medskip

\noindent{\bf Fact.}\ $F\ap\circ J=J\ap$.

\medskip

By Proposition \ref{n1n}, $\BB$ is a coreflective subcategory of $\CC$. Hence, we can use both, Proposition \ref{mere duality} and Theorem \ref{what about S}. Thus, with the adjoint $\tilde{S}$ of $\tilde{T}$ defined there, putting

$$\Psi\ap\df F\ap\circ \ti{S}:\;\HC_{{\rm qop}}\lra\Fed,$$
we immediately obtain (using the above Fact, Theorem \ref{what about S}, and the definition of $\Psi$ of \ref{n2n})
$$J\ap\circ S=\Psi\ap\circ I,\ \ \Psi\ap\circ\Psi\simeq {\sf Id}_{\Fed}\ \ \mbox{ and }\ \ \Psi\circ\Psi\ap\simeq {\sf Id}_{\CC}.$$
\end{nist}

We can now analyze the connection between the contravariant functors $\Psi\ap$ and $\Phi^t$. Recall that,
for a compact Hausdorff space $X$, $\pi_X:EX\lra X$ denotes the projective cover of $X$ (see \ref{niab}(b)); also,  $\p_{\pi_X}:\CO(EX)\lra\RC(X),\ P\mapsto\pi_X(P),$ is the Boolean isomorphism of Theorem \ref{thalpon}. Finally, if $f\in \HC_{{\rm qop}}(X,X\ap)$, then $Ef$ denotes the unique continuous mapping $\hat{f}:EX\lra EY$ such that $f\circ \pi_X=\pi_Y\circ\hat{f}$ (see Propositions \ref{HJTHM} and \ref{begen}).

\begin{pro}\label{psiapfedtob}
{\em (a)} For every compact Hausdorff space $X$, the CNCAs $\Psi\ap(X)$ and $\Phi^t(X)$ are CA-isomorphic (and, thus, $\Fed$-isomorphic).

\medskip

\noindent{\em (b)} For every $f\in\HC_{{\rm qop}}(X,X\ap)$, $\Phi^t(f)=\p_{\pi_X}\circ\Psi\ap(f)\circ(\p_{\pi_{X\ap}})\inv$.
\end{pro}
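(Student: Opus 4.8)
The plan is to strip $\Psi\ap=F\ap\circ\tilde S$ down to concrete topology and then run two short diagram chases, one for the object part (a) and one for the morphism part (b). First I would record the concrete meaning of the abstract data in the situation of \ref{n1n}: by \ref{niab}(c) the coreflector $E\colon\CC\to\BB$ sends a compact Hausdorff space $X$ to its absolute $EX$, the coreflection $\pi$ is the family of projective covers $\pi_X\colon EX\to X$, and $\eta=t|_{\BB}$ is Stone's natural isomorphism, so $\eta_{EX}=t_{EX}\colon EX\to S^a(\CO(EX))$; moreover, for $f\in\CC(X,X\ap)$, the arrow $Ef$ coincides with the map $Ef=\hat{f}\colon EX\to EX\ap$ introduced just before the proposition (both being the unique continuous map with $f\circ\pi_X=\pi_{X\ap}\circ Ef$; see Propositions \ref{HJTHM} and \ref{begen}), which by \ref{begen} is open. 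Hence $\tilde SX=(\CO(EX),\,p_X)$ with $p_X\df\pi_X\circ\eta_{EX}\inv\colon S^a(\CO(EX))\to X$, and $\tilde Sf=(S^t(Ef),f)$; since $F\ap=F_1\ap\circ F_2\ap$ acts on morphisms by projecting onto the first coordinate, $\Psi\ap(f)=S^t(Ef)\colon\CO(EX\ap)\to\CO(EX)$, $P\mapsto(Ef)\inv(P)$. A small separate computation with the definitions of $t_{EX}$ and $s_{\CO(EX)}$ gives $\eta_{EX}\inv(s_{\CO(EX)}(a))=a$, hence $p_X(s_{\CO(EX)}(a))=\pi_X(a)=\p_{\pi_X}(a)$ for every $a\in\CO(EX)$.

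For (a), I would note that $\Psi\ap(X)=F\ap(\tilde SX)=(\CO(EX),C_{(\CO(EX),q)})$, where $q$ is the natural quotient of $p_X$; since $p_X$ is already irreducible, $S^a(\CO(EX))/R_q\to X$ is a homeomorphism, so the relation of Lemma \ref{lem1} is unaffected, i.e. $C_{(\CO(EX),q)}=C_{(\CO(EX),p_X)}$. Then the identity recorded above gives, for all $a,b\in\CO(EX)$, that $aC_{(\CO(EX),p_X)}b$ holds iff $\p_{\pi_X}(a)\cap\p_{\pi_X}(b)\nes$, i.e. iff $\p_{\pi_X}(a)\,\rho_X\,\p_{\pi_X}(b)$. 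As $\p_{\pi_X}\colon\CO(EX)\to\RC(X)$ is a Boolean isomorphism by Theorem \ref{thalpon}, it is therefore a CA-isomorphism from $\Psi\ap(X)$ onto $(\RC(X),\rho_X)=\Phi^t(X)$, and in particular a $\Fed$-isomorphism.

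For (b), I would fix $G\in\RC(X\ap)$, put $V\df\int_{X\ap}(G)$, and chase $G$ around the square. By Theorem \ref{thalpon}, $(\p_{\pi_{X\ap}})\inv(G)=\cl_{EX\ap}(\pi_{X\ap}\inv(V))$; applying $\Psi\ap(f)=S^t(Ef)$ and using that $Ef$, being open and continuous, commutes with closure under preimages, this becomes $\cl_{EX}((Ef)\inv(\pi_{X\ap}\inv(V)))=\cl_{EX}(\pi_X\inv(f\inv(V)))$, where the last step uses $f\circ\pi_X=\pi_{X\ap}\circ Ef$. Applying $\p_{\pi_X}$ then yields $\pi_X(\cl_{EX}(\pi_X\inv(f\inv(V))))=\cl_X(f\inv(V))$, because $\pi_X$ is a closed continuous surjection (the inclusion $\sbe$ being continuity plus surjectivity, and $\spe$ being closedness plus surjectivity). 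Since $\Phi^t(f)(G)=\Psi^t(f)(G)=\cl_X(f\inv(\int_{X\ap}(G)))=\cl_X(f\inv(V))$, this is precisely $\Phi^t(f)=\p_{\pi_X}\circ\Psi\ap(f)\circ(\p_{\pi_{X\ap}})\inv$.

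The main obstacle is not a single hard argument but the bookkeeping of the opening paragraph: correctly matching $E$, $\eta$, $\tilde S$ and $F\ap$ with their concrete forms — in particular, recognising that the coreflector's action on morphisms is the map $Ef=\hat{f}$ of Proposition \ref{begen}, and that $F\ap=F_1\ap\circ F_2\ap$ is the first-coordinate projection on morphisms while on objects it produces the contact relation $C_{(\CO(EX),q)}$ of Lemma \ref{lem1}. Everything else rests on two elementary point-set facts I would use without further comment: open continuous maps commute with closure under preimages, and a closed continuous surjection $\pi$ satisfies $\pi(\cl(\pi\inv(U)))=\cl(U)$ for every open $U$.
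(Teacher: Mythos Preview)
Your proof is correct and follows essentially the same route as the paper's. Both arguments identify $\tilde S X=(\CO(EX),\pi_X\circ t_{EX}^{-1})$, reduce the contact relation via the identity $t_{EX}^{-1}(s_{\CO(EX)}(a))=a$ to $\pi_X(a)\cap\pi_X(b)\neq\emptyset$, and for (b) unwind $(\p_{\pi_{X'}})^{-1}$ via Theorem~\ref{thalpon}, commute $(Ef)^{-1}$ with closure using openness of $Ef$, and finish with the closed-surjection identity for $\pi_X$; your extra care in passing through $F_2'$ to the natural quotient $q$ (and noting $C_{(A,q)}=C_{(A,p_X)}$) and your use of $(Ef)^{-1}\circ\pi_{X'}^{-1}=\pi_X^{-1}\circ f^{-1}$ before applying $\p_{\pi_X}$ are harmless reorderings of the same steps.
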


\begin{proof}
(a) Let $X\in|\CC|$. Then, by Theorem \ref{what about S}, $\ti{S}(X)=(\CO(EX), p)$, where $p=\pi_X\circ (t_{EX})\inv$, with $t_{EX}:EX\lra T(\CO(EX))$ as defined in \ref{Std}. For $A\df \CO(EX)$ and $\pi\df\pi_X$ we have $\Psi\ap(X)=F\ap(\ti{S}(X))=F\ap(A,p)=(A,C_{(A,p)})$. Also,  $\Phi^t(X)=(\RC(X),\rho_X)$  (see Example \ref{rct} for $\rho_X$).  We now show that

$$\p_\pi:\Psi\ap(X)\lra\Phi^t(X)$$ is a CA-isomorphism. Indeed, for $P,Q\in A$, we have, by definition of the relation $C_{(A,p)}$ (see Lemma \ref{lem1}),

\begin{tabular}{ll}
$PC_{(A,p)}Q$&$\Leftrightarrow p(s_A(P))\cap p(s_A(Q))\nes$\\
&$\Leftrightarrow\pi((t_{EX})\inv(s_A(P)))\cap \pi((t_{EX})\inv(s_A(Q)))\nes$\\
&$\Leftrightarrow \pi(\{\bigcap u\!\st\!u\!\in\! \Ult(A),P\in u\})\cap \pi(\{\bigcap v\!\st\! v\!\in \!\Ult(A),Q\in v\})\nes$\\
&$\Leftrightarrow (\ex x\in P)(\ex y\in Q)(\pi(x)=\pi(y))$\\
&$\Leftrightarrow \pi(P)\cap\pi(Q)\nes\Leftrightarrow \p_\pi(P)\rho_X\p_\pi(Q).$\\
\end{tabular}

\noindent Therefore, $\Psi\ap(X)$ and $\Phi^t(X)$ are CA-isomorphic.
\medskip

\noindent(b) For $f\in\HC_{{\rm qop}}(X,X\ap)$, we have $\ti{S}(f)=(S^t(Ef),f)$ (Theorem \ref{what about S}). Hence, $\Psi\ap(f)=F\ap(\ti{S}(f))=S^t(Ef)$; also, $\Phi^t(f)(H\ap)=\cl_X(f\inv(\int_{X\ap}(H\ap)))$, for every $H\ap\in\RC(X\ap)$ (see Theorem \ref{dvth} and its proof, and the definition of $\Phi^t$ in \ref{dcompn}). Now, for $H\ap\in\RC(X\ap)$, since $Ef$ is an open mapping (by Proposition \ref{begen}), and since $f\inv=\pi_X\circ(Ef)\inv\circ(\pi_{X\ap})\inv$, with Theorem \ref{thalpon} and \cite[Ex. 1.4.C]{E} we obtain

\begin{tabular}{ll}
$(\p_{\pi_X}\circ\Psi\ap(f)\circ(\p_{\pi_{X\ap}})\inv)(H\ap)$&$=\p_{\pi_X}(\Psi\ap(f)((\p_{\pi_{X\ap}})\inv(H\ap)))$\\
&$=\p_{\pi_X}((Ef)\inv(\cl_{EX\ap}(\pi_{X\ap}\inv(\int_{X\ap}(H\ap)))))$\\
&$=\pi_X(\cl_{EX}((Ef)\inv(\pi_{X\ap}\inv(\int_{X\ap}(H\ap)))))$\\
&$=\cl_X(\pi_X((Ef)\inv(\pi_{X\ap}\inv(\int_{X\ap}(H\ap)))))$\\
&$=\cl_X(f\inv(\int_{X\ap}(H\ap)))=\Phi^t(f)(H\ap)$.\\
\end{tabular}

Therefore, $\Phi^t(f)=\p_{\pi_X}\circ\Psi\ap(f)\circ(\p_{\pi_{X\ap}})\inv$.
\end{proof}

\bigskip

\noindent{\bf Acknowledgements.} The authors would like to thank Prof. V. Valov for the informative discussions on
the subject of this paper.


\begin{thebibliography}{99}

\parskip0pt

\bibitem{AHS}
 {\sc   Ad\'amek, J., Herrlich, H. and  Strecker, G. E.}
  \newblock  {\em Abstract and Concrete Categories}.
 \newblock    Online edition, 2004, http://katmat.math.uni-bremen.de/acc.




\bibitem{A}
 {\sc  Aiello, M.,  Pratt-Hartmann, I. and   van Benthem, J.} (Eds.),
  \newblock   {\em Handbook of spatial logics},
   \newblock Springer-Verlag, Berlin Heidelberg, 2007.


 \bibitem{Alex}
 {\sc    Alexandroff, P. S.}
\newblock  {\em Outline of Set Theory and General Topology},
\newblock  Nauka, Moskva, 1977  (In Russian).


  \bibitem{AP}
{\sc    Alexandroff, P. S. and Ponomarev, V. I.}
\newblock  On bicompact extensions of topological spaces.
\newblock  Vestn. Mosk. Univ. Ser. Mat. (1959), 93--108. (In Russian).

\bibitem{ArP}
{\sc  Arhangel'skii, A. V. and  Ponomarev, V. I.}
\newblock  {\em Fundamentals of General Topology: Problems and Exercises},
\newblock Reidel, Dordrecht, 1984. Originally published by Izdatelstvo Nauka, Moscow, 1974.

\bibitem{Bezh}
{\sc Bezhanishvili, G.}
\newblock   Stone duality and Gleason covers through de Vries duality.
\newblock Topology and its Applications, 157 (2010), 1064--1080.


\bibitem{BBSV}
{\sc Bezhanishvili, G., Bezhanishvili, N., Sourabh, S. and Venema, Y.}
\newblock  Irreducible equivalence relations, Gleason spaces, and de Vries duality.
\newblock Applied Categorical Structures 25(3) (2017), 381--401.


\bibitem{Bl1}
{\sc Blaszczyk, A.}
\newblock   On a factorization lemma and a construction of absolute without separation axioms.
\newblock In Nov\'{a}k, J. (Ed.), General topology and its relations to modern analysis and algebra IV,
Proceedings of the fourth Prague topological symposium, 1976, Part
B: Contributed Papers,
Society of Czechoslovak Mathematicians and
Physicists, Praha, 1977, pp. 45--50.

\bibitem{borger-tholen}
{\sc B\"{o}rger, R. and Tholen, W.}
\newblock Abschw\"{a}chungen des Adjunktionsbegriffs.
\newblock Manuscripta Math. 19 (1976), 19--45.

\bibitem{CE}
{\sc  \v Cech, E.}
\newblock  {\em Topological Spaces.}
\newblock  Interscience, London, 1966.

\bibitem{Celani}
{\sc  Celani, S.}
\newblock  {\em Quasi-modal algebras},
\newblock Math. Bohem., 126 (2001), 721--736.

\bibitem{ch01}
{\sc Cohn,  A. G. and  Hazarika, S. M.}
\newblock  {\em Qualitative spatial representation and reasoning: An overview},
\newblock Fundamenta Informaticae, 46 (2001), 1--29.





\bibitem{deV}
 {\sc de Vries, H.}
\newblock  {\em Compact Spaces and
Compactifications, an Algebraic Approach}.
\newblock Van Gorcum, The Netherlands, 1962.

\bibitem{D-a0903-2593}
{\sc  Dimov, G.}
\newblock  A  de Vries-type duality theorem for locally compact spaces -- II,
\newblock arXiv:0903.2593v4, 1-37.


\bibitem{D2009}
{\sc  Dimov, G.}
\newblock Some generalizations of the Fedorchuk duality theorem - {I},
\newblock Topology Appl., 156 (2009), 728-746.

\bibitem{D-PMD12}
{\sc  Dimov, G.}
\newblock Some generalizations of the Stone duality theorem,
\newblock Publicationes Mathematicae Debrecen, 80 (2012), 255--293.



\bibitem{DDT}
{\sc  Dimov, G., Ivanova-Dimova, E. and Tholen, W.}
\newblock (In preparation.)



\bibitem{DV1}
{\sc Dimov, G. and Vakarelov, D.}
\newblock Contact Algebras and Region-based Theory of Space: A
Proximity Approach - I.
 \newblock  Fundamenta Informaticae 74(2-3) (2006), 209-249.


\bibitem{DV2}
{\sc Dimov, G. and Vakarelov, D.}
\newblock Contact Algebras and Region-based Theory of Space: A
Proximity Approach - II.
 \newblock  Fundamenta Informaticae 74(2-3) (2006), 251-282.



\bibitem{DUV}
{\sc D\"{u}ntsch, I.  and  Vakarelov, D.}
\newblock  Region-based theory of discrete spaces: A proximity approach,
\newblock Annals of Mathematics and Artificial Intelligence, 49 (2007), 5--14.





\bibitem{EF}
{\sc Efremovi\v{c},  V. A.}
\newblock Infinitesimal spaces.
\newblock  DAN SSSR, 76 (1951), 341--343.

\bibitem{E}
{\sc  Engelking,  R.}
\newblock {\em  General Topology,} second ed.,
\newblock Sigma Series in Pure Mathematics, vol.~6, Heldermann Verlag, Berlin, 1989.



\bibitem{F}
 {\sc Fedorchuk,  V. V.}
\newblock   Boolean $\d$-algebras and quasi-open mappings.
\newblock  Sibirsk. Mat. \v{Z}. 14 (5) (1973), 1088--1099; English translation: Siberian Math. J.
14 (1973), 759-767 (1974).




\bibitem{Gle}
 {\sc Gleason, A. M.}
\newblock  Projective topological spaces.
\newblock Illinois J. Math.  2 (1958), 482--489.




\bibitem{HJ}
{\sc Henriksen, M. and Jerison, M.}
\newblock   Minimal projective extensions of compact spaces
\newblock  Duke Math. J., 32 (1965), 291--295.

\bibitem{cosit2009}
 {\sc Hornsby,  K. S.,  Claramunt, C., Denis, M.  and  Ligozat, G.} (Eds.),
\newblock  {\em Spatial Information Theory: Proceedings of the 9th International Conference, COSIT 2009},
\newblock Lecture Notes in Computer Science 5756, Springer Verlag, 2009.




\bibitem{kop89}
{\sc  Koppelberg, S.}
\newblock  {\em Handbook
  on Boolean Algebras, vol. 1: General Theory of Boolean Algebras}.
\newblock  North Holland,
1989.




\bibitem{MP}
{\sc Marde\v{s}ic, S. and  Papic, P.}
\newblock Continuous images of ordered compacta, the Suslin property and dyadic compacta.
\newblock Glasnik mat.-fis. i astronom. 17 (1962), 3--25.

\bibitem{medvedev}
{\sc Medvedev, M. Ya.}
\newblock Semiadjoint functors and Kan extensions.
\newblock Siberian Math. J. 15 (1975), 674--676.
\newblock English translation of: Sib. Mat. Z. 15 (1974), 952--956.


\bibitem{MR}
{\sc Mioduszewski, J. and Rudolf, L.}
\newblock H-closed and extremally disconected Hausdorff spaces.
\newblock Dissert. Math. (Rozpr. Mat.) 66 (1969), 1--52.


\bibitem{NW}
{\sc Naimpally, S.  and Warrack, B.}
{\it Proximity Spaces.}
Cambridge,
London, 1970.



\bibitem{Pon1}
{\sc Ponomarev, V. I.}
\newblock Paracompacta: their projection spectra and continuous
mappings.
\newblock Mat. Sb. (N.S.) 60 (1963), 89--119. (In Russian)



\bibitem{PS}
{\sc Ponomarev, V. I. and  \v{S}apiro, L. B.}
\newblock Absolutes of topological spaces and their continuous
mappings.
\newblock Uspekhi Mat. Nauk 31 (1976), 121--136. (In Russian)



\bibitem{RCC}
{\sc Randell, D. A., Cui, Z. and Cohn, A. G.}
\newblock A spatial logic based on regions and connection.
\newblock In: B. Nebel -W. Swartout - C. Rich (Eds.), Proceedings of the 3rd International Conference Knowledge Representation and
Reasoning, Morgan Kaufmann, Los Allos, CA, 1992, 165–176.


\bibitem{Sa}
{\sc   \v{S}apiro, L. B.}
\newblock On absolutes of topological spaces and  continuous mappings.
\newblock Soviet Math. Dokl. 17 (1976), 147--151. (In Russian)




\bibitem{Sm}
{\sc Smirnov, J. M. }
\newblock On proximity spaces.
\newblock Mat. Sb. 31 (1952), 543--574.



\bibitem{ST}
{\sc Stone, M. H.}
\newblock The theory of representations for Boolean algebras.
\newblock    Trans. Amer. Math. Soc.,  40  (1936), 37--111.



\bibitem{Ul}
{\sc Uljanov, V. M.}
\newblock On compactifications having countable character and absolutes.
\newblock {\em Matem. Sb.}, 98 (140) (1975), 223-254. (In Russian)







\bibitem{Wa}
{\sc Walker, R. C.}
{\it The Stone-\v{C}ech Compactification.}
Springer-Verlag, 1974.


\bibitem{ww12}
{\sc Wolter, D.  and  Wallgr{\"u}n, J. O.}
\newblock  {\em Qualitative Spatial Reasoning for Applications: New Challenges and the SparQ Toolbox},
\newblock  In S. M. Hazarika (Ed.), Qualitative Spatio-Temporal Representation and Reasoning: Trends and Future Directions, Hershey, 2012, 336--362.

\end{thebibliography}
\end{document}